\title{Disjoint superheavy subsets and fragmentation norms}
\author{Morimichi Kawasaki} 
\address[Morimichi Kawasaki]{Research Institute for Mathematical Sciences, Kyoto University, Kyoto 606-8502, Japan}
\email{kawasaki@kurims.kyoto-u.ac.jp}
\author{Ryuma Orita} 
\address[Ryuma Orita]{Department of Mathematical Sciences, Tokyo Metropolitan University, Tokyo 192-0397, Japan}
\email{ryuma.orita@gmail.com}
\urladdr{https://ryuma-orita.github.io/}
\date{\today}
\thanks{The first named author has been supported by IBS-R003-D1. This work has been supported by JSPS KAKENHI Grant Numbers JP18J00765, JP18J00335.}
\subjclass[2010]{Primary 57R17, 53D12; Secondary 53D40, 53D45}
\keywords{Symplectic manifolds, groups of Hamiltonian diffeomorphisms, fragmentation norms, spectral invariants, quasi-morphisms}
\newtheorem{theorem}{Theorem}[section]
\newtheorem{lemma}[theorem]{Lemma}
\newtheorem{proposition}[theorem]{Proposition}
\newtheorem{corollary}[theorem]{Corollary}
\newtheorem{problem}[theorem]{Problem}
\theoremstyle{definition}
\newtheorem{definition}[theorem]{Definition}
\theoremstyle{remark}
\newtheorem{remark}[theorem]{Remark}
\newcommand{\osc}{\mathop{\mathrm{osc}}\nolimits}
\newcommand{\Ham}{\mathop{\mathrm{Ham}}\nolimits}
\newcommand{\Vol}{\mathop{\mathrm{Vol}}\nolimits}
\newcommand{\QH}{\mathop{\mathrm{QH}}\nolimits}
\newcommand{\HF}{\mathop{\mathrm{HF}}\nolimits}
\newcommand{\supp}{\mathop{\mathrm{supp}}\nolimits}
\newcommand{\relmiddle}[1]{\mathrel{}\middle#1\mathrel{}}
\begin{document}

\begin{abstract}
We present a lower bound for a fragmentation norm and construct a bi-Lipschitz embedding $I\colon \mathbb{R}^n\to\Ham(M)$
with respect to the fragmentation norm on the group $\Ham(M)$ of Hamiltonian diffeomorphisms of a symplectic manifold $(M,\omega)$.
As an application, we provide an answer to Brandenbursky's question on fragmentation norms on $\Ham(\Sigma_g)$, where $\Sigma_g$ is a closed Riemannian surface of genus $g\geq 2$.
\end{abstract}

\maketitle

\tableofcontents


\section{Introduction}\label{section:intro}

\subsection{Background and definition}

Let $(M,\omega)$ be a symplectic manifold.
Let $\Ham(M)$ denote the group of compactly supported Hamiltonian diffeomorphisms of $M$.
In his well-known work \cite{Ba}, Banyaga proved the simplicity of $\Ham(M)$ when $M$ is a closed symplectic manifold.
The key ingredient was the proof of the fragmentation lemma for this group, which, in turn,
allows us to define fragmentation norms on $\Ham(M)$ as follows.
Let $\mathcal{U}=\{U_{\lambda}\}_{\lambda}$ be an open covering of $M$.
The fragmentation lemma implies that for every $\phi\in\Ham(M)$ there exists a positive integer $n$ such that $\phi$ can be represented as a product of $n$ diffeomorphisms
$\theta_i\in\Ham(U_{\lambda_i})$, where $\lambda_i\in\lambda$ and $1\leq i\leq n$.
For $\phi\neq\mathrm{id}_M$,
its \textit{fragmentation norm} $\|\phi\|_{\mathcal{U}}$ with respect to the covering $\mathcal{U}$ is defined to be the minimal number of factors in such a product.
We set $\|\phi\|_{\mathcal{U}}=0$ when $\phi=\mathrm{id}_M$.
Accordingly, the fragmentation norm with respect to an open subset $U$ of $M$ is defined as follows.
We consider an open covering $\mathcal{U}_U$ consisting of all open subsets $V$ such that $\psi(V)\subset U$ for some $\psi\in\Ham(M)$.
The fragmentation norm $\|\phi\|_U$ of $\phi$ is defined to be $\|\phi\|_{\mathcal{U}_U}$.

Entov and Polterovich \cite{EP03} provided a lower bound for the quantitative fragmentation norm \cite{EP03},
using primarily the Oh--Schwarz spectral invariant constructed using Hamiltonian Floer homology.
Subsequently, Burago, Ivanov, and Polterovich \cite{BIP} provided a lower bound for the fragmentation norm itself,
also using the Oh--Schwarz spectral invariant, but their argument had a different basis; see also \cite[Section 4.4]{E}.
In addition, Lanzat \cite{L} and Monzner, Vichery, and Zapolsky \cite{MVZ} provided lower bounds for the fragmentation norms in the case in which $M$ is an open symplectic manifold,
basing their strategies on arguments from \cite{EP03}.
In addition, Brandenbursky and Brandenbursky--K\c{e}dra \cite{Br,BK} provided a lower bound for the fragmentation norm using a Polterovich quasi-morphism whose construction does not involve Floer theory.

Recently, fragmentation norms have been receiving considerable attention,
because they are known to be related to the open problem of the simplicity of the group of compactly supported measure-preserving homeomorphisms of an open disk in the Euclidean plane \cite{LR,EPP}.

In the present paper, we provide a lower bound for the fragmentation norm and construct a bi-Lipschitz embedding $I\colon \mathbb{R}^n\to\Ham(M)$ with respect to the fragmentation norm on $\Ham(M)$.
Our strategy of the proof is based on the work of Entov, Polterovich, and Py \cite{EPP}.
As an application, we provide an answer to Brandenbursky's question \cite[Remark 1.5]{Br}.
The solution involves both Hamiltonian and Lagrangian Floer theory.


\subsection{Principal results}

Let $(M,\omega)$ be a symplectic manifold.
Let $\widetilde{\Ham}(M)$ denote the universal cover of $\Ham(M)$.
Here we define subadditive invariants on $\widetilde{\Ham}(M)$
as a generalization of the Oh--Schwarz spectral invariant and the Lagrangian spectral invariant.

\begin{definition}
A function $c\colon\widetilde{\Ham}(M)\to\mathbb{R}$ is called a \textit{subadditive invariant} if it satisfies the subadditivity condition, i.e.,
$c(\tilde{\phi}\tilde{\psi})\leq c(\tilde{\phi})+c(\tilde{\psi})$ for any $\tilde{\phi},\tilde{\psi}\in\widetilde{\Ham}(M)$.
\end{definition}

\begin{remark}
Polterovich and Rosen introduced a function similar to our subadditive invariant \cite[Section 3.4]{PR}.
However, in addition to subadditivity, they assumed conjugation invariance.
In this paper, we do not make that assumption, because in Section \ref{section:lagspecinv}, we deal with Lagrangian spectral invariants,
which are not conjugation invariant.
\end{remark}

Let $N$ be a positive integer.
The \textit{oscillation norm} $\osc$ on $\mathbb{R}^N$ is defined to be $\osc(r_1,\ldots,r_N)=\max_{i,j}\lvert r_i-r_j\rvert$
for $(r_1,\ldots,r_N)\in\mathbb{R}^N$.
We refer to Section \ref{section:preliminaries} for the definitions of the notions appearing in the following principal theorems.

\begin{theorem}\label{theorem:main}
Let $c_0,c_1,\ldots,c_N\colon\widetilde{\Ham}(M)\to\mathbb{R}$ be subadditive invariants descending asymptotically to $\Ham(M)$.
Let $U$ be an open subset of $M$ satisfying the normally bounded spectrum condition with respect to $c_i$ for all $i=0,1,\ldots,N$.
Let $X_0,X_1,\ldots,X_N$ be mutually disjoint closed subsets of $M$ such that each $X_i$ is $c_i$-superheavy.
Then there exists a bi-Lipschitz injective homomorphism
\[
	I\colon(\mathbb{R}^N,\osc)\to(\Ham(M),\|\cdot\|_U).
\]
\end{theorem}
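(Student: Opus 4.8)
The plan is to define $I$ on the standard basis and extend linearly, using the disjoint superheavy subsets $X_1,\dots,X_N$ together with the ``reference'' subset $X_0$ to detect the oscillation norm. First I would pick, for each $i=1,\dots,N$, a Hamiltonian $H_i$ supported near $X_i$ (or more precisely a normalized autonomous Hamiltonian whose time-one map $\phi_i$ moves in a controlled way near $X_i$) so that the $\phi_i$ pairwise commute, and set $I(r_1,\dots,r_N)=\phi_1^{r_1}\cdots\phi_N^{r_N}$; commutativity makes this a well-defined injective homomorphism $\mathbb{R}^N\to\Ham(M)$ once the generators are chosen to be independent, which is where disjointness of the supports enters. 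The Lipschitz upper bound $\|I(r)\|_U\le C\cdot\osc(r)$ should follow from a direct geometric construction: after subtracting a common scalar (which is permitted since $\osc$ is translation-invariant along the diagonal) one may assume $\min_i r_i=0$, and then each factor $\phi_i^{r_i}$ is a Hamiltonian diffeomorphism supported in a set displaceable into $U$, so it contributes a bounded number of fragmentation factors proportional to $\lceil r_i\rceil$; summing and using that the $r_i$ are all within $\osc(r)$ of $0$ gives the bound.

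The substance is the lower bound $\|I(r)\|_U\ge c\cdot\osc(r)$, and here I would invoke the subadditive invariants exactly as in Theorem~\ref{theorem:main}'s hypotheses. The key computation is to evaluate the $c_i$ on a lift $\widetilde{I(r)}$: because $X_i$ is $c_i$-superheavy and $\phi_i$ is generated by a Hamiltonian that is (up to a constant) equal to a large value on $X_i$, the superheaviness of $X_i$ forces $c_i(\widetilde{I(r)})$ to grow linearly in $r_i$, while the contributions of the other commuting factors $\phi_j^{r_j}$ ($j\ne i$), whose supports are disjoint from $X_i$, are absorbed by the superheaviness (they shift $c_i$ by a bounded amount independent of the $r_j$). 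Doing the same with $c_0$ and $X_0$ gives a baseline. Then the normally bounded spectrum condition on $U$ with respect to each $c_i$ says precisely that each elementary factor in a fragmentation decomposition of $I(r)$ into pieces supported in sets displaceable into $U$ changes $c_i$ by at most a universal constant; combining the linear lower bound on $|c_i(\widetilde{I(r)})-c_0(\widetilde{I(r)})|$ (which is $\ge\mathrm{const}\cdot|r_i-r_0|$ after normalization, hence controls $\osc$) with this uniform per-factor bound yields $\|I(r)\|_U\gtrsim\max_i|r_i-r_0|\sim\osc(r)$, using that the invariants descend asymptotically to $\Ham(M)$ so that the estimate is insensitive to the choice of lift.

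The main obstacle I anticipate is bookkeeping the interaction between the commuting factors when evaluating a single $c_i$: one needs that $c_i(\widetilde{\phi_1^{r_1}\cdots\phi_N^{r_N}})$ is comparable to $c_i(\widetilde{\phi_i^{r_i}})$ up to an additive error uniform in all the $r_j$. This should come from subadditivity in one direction together with a superheaviness/displaceability argument in the other: since the product of the $j\ne i$ factors is supported away from $X_i$ and (after arranging the supports to be displaceable into $U$) has $c_i$-value controlled via the normally bounded spectrum condition, one extracts it at bounded cost. A secondary subtlety is ensuring the generators $\phi_i$ can be chosen with supports simultaneously disjoint from each other, disjoint in the relevant sense from the $X_j$ for $j\ne i$, yet still ``seen'' by $X_i$ via superheaviness — this is a matter of choosing the autonomous Hamiltonians to be locally constant near the $X_i$ with distinct constants, which is exactly the construction underlying the notion of superheavy sets recalled in Section~\ref{section:preliminaries}.
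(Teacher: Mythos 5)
Your overall strategy coincides with the paper's: take disjointly supported autonomous normalized Hamiltonians $H_i$ with $H_i\equiv 1$ on $X_i$ and $\supp(H_i)\cap X_j=\emptyset$ for $j\neq i$, set $I(r)=\varphi_{r_1H_1+\cdots+r_NH_N}$, read off $\sigma_{c_i}(\tilde{\varphi}_{r_1H_1+\cdots+r_NH_N})=r_i$ from superheaviness (Proposition \ref{proposition:shv}), and compare the difference $\sigma_{c_i}-\sigma_{c_0}$ --- which descends to $\Ham(M)$ --- against the number of fragmentation factors. However, two of your justifications would fail as stated. First, the normally bounded spectrum condition does \emph{not} say that an elementary factor $\varphi_F$ with $F\in\mathcal{H}\bigl(\psi(U)\bigr)$ changes $c_i$ (or $\sigma_{c_i}$) by a universal constant: it bounds $c_i(\tilde{\psi}^{-1}\tilde{\varphi}_F\tilde{\psi})+\langle F\rangle$, and $\langle F\rangle$ is unbounded over such $F$. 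The correct per-factor statement (Lemma \ref{lemma:defect}, Proposition \ref{proposition:defect1}) is that multiplying by $\tilde{\varphi}_F$ shifts $\sigma_{c_i}$ by $-\langle F\rangle$ up to a constant $K_i$; telescoping over the $\alpha$ factors leaves the unbounded term $\sum_{\ell}\langle F_{\ell}\rangle$, and only after subtracting the analogous estimate for $c_0$ do these mean values cancel, yielding $\lvert(\sigma_{c_i}-\sigma_{c_0})(\tilde{\varphi}_{F_1}\cdots\tilde{\varphi}_{F_\alpha})\rvert<\alpha(K_i+K_0)$. You do pass to the difference at the end, so the repair is at hand, but the cancellation of the mean values is the actual point and must be made explicit. (The same objection defeats your fallback plan of ``extracting'' the factors $\phi_j^{r_j}$, $j\neq i$, at bounded cost via the spectrum condition; it is also unnecessary, since superheaviness applied to the single Hamiltonian $\sum_j r_jH_j$, which is identically $r_i$ on $X_i$, gives $\sigma_{c_i}=r_i$ exactly.)

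Second, your upper bound rests on the claim that each $\phi_i^{r_i}$ is supported in a set displaceable into $U$. This is false: $\supp(H_i)$ contains the $c_i$-superheavy set $X_i$, which cannot be mapped into $U$ by any Hamiltonian diffeomorphism (otherwise Propositions \ref{proposition:shv} and \ref{proposition:VPNBSC} would contradict each other), so $\varphi_{r_iH_i}$ is not a single elementary factor and must genuinely be fragmented. The mechanism the paper uses is Lemma \ref{lemma:uniform}: for small $\varepsilon$ the $C^1$-small diffeomorphism $\varphi_{\varepsilon H_i}$ satisfies $\|\varphi_{\varepsilon H_i}\|_U\leq N_{i,U}$ with $N_{i,U}$ depending only on $\supp(H_i)$ and $U$, and writing $\varphi_{r_iH_i}=\varphi_{\varepsilon H_i}^{a_i}\varphi_{b_iH_i}$ gives $\|\varphi_{r_iH_i}\|_U\leq(a_i+1)N_{i,U}$. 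Finally, ``subtracting a common scalar so that $\min_i r_i=0$'' is not legitimate, since $I\bigl(r-s(1,\ldots,1)\bigr)\neq I(r)$ for $s\neq 0$ by injectivity; the upper bound one actually obtains is in terms of $\sum_i\lvert r_i\rvert$, exactly as in the paper.
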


When $c_0,c_1,\ldots,c_N$ are quasi-morphisms, we obtain a stronger result than Theorem \ref{theorem:main}.

\begin{theorem}\label{theorem:main2}
Let $c_0,c_1,\ldots,c_N\colon\widetilde{\Ham}(M)\to\mathbb{R}$ be subadditive invariants descending asymptotically to $\Ham(M)$ that are quasi-morphisms.
Let $U$ be an open subset of $M$ satisfying the asymptotically vanishing spectrum condition with respect to $c_i$ for all $i=0,1,\ldots,N$.
Let $X_0,X_1,\ldots,X_N$ be mutually disjoint closed subsets of $M$ such that each $X_i$ is $c_i$-superheavy.
Then, there exists a bi-Lipschitz injective homomorphism
\[
	I\colon(\mathbb{R}^N,\osc)\to(\Ham(M),\|\cdot\|_U).
\]
\end{theorem}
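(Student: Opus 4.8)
The plan is to exhibit an explicit homomorphism and bound its fragmentation norm from above by a Lipschitz function of the oscillation norm, while using the superheaviness of the $X_i$'s together with the quasi-morphism property of the $c_i$'s to bound it from below.

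First I would fix, for each $i=1,\ldots,N$, a Hamiltonian diffeomorphism supported near $X_i$ whose time-$t$ flow $\phi^t_i$ displaces nothing but realizes a nonzero value of $c_i$ in a controlled way — more precisely, using superheaviness of $X_i$, a time-independent autonomous Hamiltonian $H_i$ equal to a constant on a neighborhood of $X_i$ gives $c_i(\widetilde{\phi^1_{H_i}})$ equal to that constant, while $c_j(\widetilde{\phi^1_{H_i}})$ for $j\neq i$ is controlled because $X_j$ is disjoint from $X_i$ and the relevant constant can be taken $0$ near $X_j$. Then I would define $I(r_1,\ldots,r_N)$ to be a product of the time-$r_i$ maps of such flows (or, more robustly, use the construction of \cite{EPP}: conjugate a single autonomous flow by displacing diffeomorphisms to land inside $U$). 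Since autonomous flows commute and the supports can be arranged appropriately, $I$ is a genuine group homomorphism $(\mathbb{R}^N,+)\to\Ham(M)$; note that the oscillation norm, rather than a genuine norm, is forced here because adding a common constant to all $r_i$'s corresponds to an ambient Hamiltonian isotopy that does not change the element of $\Ham(M)$ (this is exactly why one descends asymptotically to $\Ham(M)$).

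For the upper bound on $\|I(r)\|_U$: writing $I(r)$ as a product, each factor is supported in a set that can be pushed into $U$ (this is where the open set $U$ and its covering $\mathcal{U}_U$ enter), so $\|I(r)\|_U$ is bounded by a constant times the number of nonzero "independent" coordinates, i.e., by a constant multiple of $\osc(r)$ after absorbing the common-constant ambiguity; making this quantitative is routine once the construction is pinned down. For the lower bound: suppose $I(r)=\theta_1\cdots\theta_k$ with each $\theta_m\in\Ham(V_m)$, $\psi_m(V_m)\subset U$. Lift to $\widetilde{\Ham}(M)$ and apply the $c_i$'s. The asymptotically vanishing spectrum condition on $U$ says that the stabilized spectral invariants $c_i$ of elements supported in such $V_m$ are $o(\text{power})$ — small in a controlled way — so that $|c_i(\widetilde{I(r)})|\leq k\cdot(\text{small error})+(\text{bounded quasi-morphism defect terms})$. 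Since $c_i$ is a quasi-morphism, homogenizing gives $\mu_i:=\lim_{j}c_i(\tilde g^j)/j$, and $\mu_i(\widetilde{I(r)})$ is, by the first step, essentially a nonzero linear function of $r_i$ (minus a common shift). Comparing, $k\geq C(\osc(r)-\text{const})$, which is the required lower Lipschitz bound.

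The main obstacle I anticipate is the bookkeeping that makes the lower bound genuinely \emph{linear} rather than merely unbounded: one must show the homogenization $\mu_i$ of $c_i$ evaluates on $I(r)$ to a linear form whose matrix (in the $r$-coordinates) is, modulo the all-ones vector, invertible — this uses the mutual disjointness of $X_0,\ldots,X_N$ in an essential way, since superheaviness gives $\mu_i(\widetilde{\phi^1_{H_i}})=c_i$ but we need the off-diagonal contributions $\mu_i(\widetilde{\phi^1_{H_j}})$ to be computable (they vanish, because $H_j$ is constant near a neighborhood of $X_i$ disjoint from $\supp H_j$, hence $c_i$ of it equals that constant which we take to be $0$). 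Tying the quasi-morphism homogenization back to the non-homogeneous $c_i$ appearing in the fragmentation estimate — controlling the accumulated defect over $k$ factors — is the other delicate point, but this is precisely where the quasi-morphism hypothesis (as opposed to mere subadditivity, which forced the weaker "normally bounded spectrum" route in Theorem \ref{theorem:main}) pays off: the defect is bounded independently of $k$, so it does not degrade the linear growth rate.
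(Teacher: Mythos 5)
Your overall architecture matches the paper's: a flow generated by Hamiltonians constant on the superheavy sets, superheaviness to evaluate the homogenized invariants on the image of $I$, the quasi-morphism defect to control a fragmentation decomposition, and a fragmentation-type lemma for the upper bound. But the lower bound as written has a genuine gap. The asymptotically vanishing spectrum condition does \emph{not} say that $\sigma_{c_i}(\tilde{\varphi}_F)$ is small for $F$ supported in (a Hamiltonian image of) $U$; it says $\sigma_{c_i}(\tilde{\varphi}_F)=-\langle F\rangle$, and the mean values $\langle F_\ell\rangle$ of the $k$ factors are completely uncontrolled. So your inequality $\lvert c_i(\widetilde{I(r)})\rvert\leq k\cdot(\text{small error})+(\text{defects})$ is false as stated: the right-hand side must contain $\sum_\ell\langle F_\ell\rangle$, which can dwarf $k$. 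This is exactly why the hypothesis carries the auxiliary pair $(c_0,X_0)$: one forms $\sigma'_i=\sigma_{c_i}-\sigma_{c_0}$, for which the $-\langle F_\ell\rangle$ contributions of the factors cancel identically (they are the same for every $i$), leaving only the accumulated quasi-morphism defects, bounded by $k(K_i+K_0)$; meanwhile on the image of $I$ one still has $\sigma'_i=r_i-0=r_i$ because all the $H_j$ vanish near $X_0$. You gesture at this with ``minus a common shift'' and ``modulo the all-ones vector,'' but you never actually perform the subtraction, and without it the argument does not close. (Two further points you elide: conjugation invariance of the homogenization $\sigma_{c_i}$ is needed to pass from $F_\ell$ supported in $\psi_\ell(U)$ back to $\mathcal{H}(U)$; and the ``descends asymptotically'' hypothesis is what lets you compare $\sigma'_i$ of the chosen lift of $I(r)$ with $\sigma'_i$ of the product of lifts of the factors, which a priori differ by an element of $\pi_1(\Ham(M))$ --- not, as you write, what absorbs a common constant in the $r_i$.)

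Two smaller corrections. First, $I$ is injective, and adding a common constant $s$ to all the $r_i$ \emph{does} change the diffeomorphism (it composes with $\varphi_{s(H_1+\cdots+H_N)}\neq\mathrm{id}$); the oscillation norm enters only because the provable lower bound controls differences of the invariants. Second, in the upper bound the supports of the $H_i$ contain superheavy sets and therefore cannot be pushed into $U$; the correct route, via the delicate fragmentation lemma, is that $\|\varphi_{tH_i}\|_U$ is uniformly bounded for $t$ in a fixed small interval, whence $\|I(r)\|_U$ is at most an affine function of $\sum_i\lvert r_i\rvert$ --- linear in $r$, not ``a constant times the number of nonzero coordinates.''
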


Concerning the fragmentation norm $\|\cdot\|_{\mathcal{U}}$ with respect to an open covering $\mathcal{U}$,
we have the following theorem.

\begin{theorem}\label{theorem:main3}
Let $c_0,c_1,\ldots,c_N\colon\widetilde{\Ham}(M)\to\mathbb{R}$ be subadditive invariants descending asymptotically to $\Ham(M)$.
Let $\mathcal{U}=\{U_{\lambda}\}_{\lambda}$ be an open covering of $M$
such that each $U_{\lambda}$ satisfies the bounded spectrum condition with respect to $c_i$ for all $i=0,1,\ldots,N$.
Let $X_0,X_1,\ldots,X_N$ be mutually disjoint closed subsets of $M$ such that each $X_i$ is $c_i$-superheavy.
Then, there exists a bi-Lipschitz injective homomorphism
\[
	I\colon(\mathbb{R}^N,\osc)\to(\Ham(M),\|\cdot\|_{\mathcal{U}}).
\]
\end{theorem}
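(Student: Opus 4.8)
The plan is to run the same argument as for Theorems~\ref{theorem:main} and \ref{theorem:main2}, the only genuinely new point being that the \emph{bounded spectrum condition}, imposed chart by chart on the members of $\mathcal{U}$, has to be turned into a single estimate valid along an entire fragmentation. I begin by constructing $I$. Since $X_0,\dots,X_N$ are mutually disjoint and closed, pick pairwise disjoint open sets $W_0,\dots,W_N$ with $X_i\subset W_i$ and smooth compactly supported $F_i\colon M\to\mathbb{R}$ with $\supp F_i\subset W_i$ and $F_i\equiv 1$ near $X_i$, so that $F_i|_{X_j}=\delta_{ij}$. I realise $\mathbb{R}^N$ as the hyperplane $\{s\in\mathbb{R}^{N+1}:\sum_{i=0}^N s_i=0\}$, on which $\osc$ restricts to a genuine norm comparable to $\max_i\lvert s_i\rvert$, and put $H_s=\sum_{i=0}^N s_iF_i$ and
\[
	I(s)=\phi^1_{H_s}=\phi^{s_0}_{F_0}\circ\cdots\circ\phi^{s_N}_{F_N},
\]
the second equality holding because the $F_i$ have pairwise disjoint supports, so their Hamiltonian flows commute. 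Hence $I\colon\mathbb{R}^N\to\Ham(M)$ is a homomorphism; write $\widetilde{I}(s)=\tilde\phi^1_{H_s}\in\widetilde{\Ham}(M)$ for the induced homomorphism to the universal cover.

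For the upper bound, $\|I(s)\|_{\mathcal{U}}\le\sum_i\|\phi^{s_i}_{F_i}\|_{\mathcal{U}}$; since $t\mapsto\phi^t_{F_i}$ is a one-parameter subgroup all of whose members are supported in the fixed compact set $\supp F_i$, the fragmentation lemma bounds $\|\phi^t_{F_i}\|_{\mathcal{U}}$ uniformly for $t\in[-1,1]$, and then $\|\phi^{s_i}_{F_i}\|_{\mathcal{U}}\le\lceil\lvert s_i\rvert\rceil\,\max\{\|\phi^1_{F_i}\|_{\mathcal{U}},\|\phi^{-1}_{F_i}\|_{\mathcal{U}}\}+O(1)$, giving $\|I(s)\|_{\mathcal{U}}\le C(\osc(s)+1)$ for a constant $C$ depending only on the $F_i$ and $\mathcal{U}$. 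The lower bound is the heart of the matter and uses, for each $i$, the homogenization $\mu_{c_i}(\tilde\phi):=\lim_{p\to\infty}c_i(\tilde\phi^p)/p$, which exists by subadditivity and satisfies $\mu_{c_i}\le c_i$. First, $H_s$ and $-H_s$ equal the constants $s_i$ and $-s_i$ on $X_i$, so the defining property of $c_i$-superheaviness of $X_i$ bounds $\mu_{c_i}(\widetilde{I}(s))$ from above by $s_i$ and $\mu_{c_i}(\widetilde{I}(s)^{-1})$ from above by $-s_i$; combined with $0\le\mu_{c_i}(\widetilde{I}(s))+\mu_{c_i}(\widetilde{I}(s)^{-1})$, which is immediate from subadditivity, this forces $\mu_{c_i}(\widetilde{I}(s))=s_i$. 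Next, write $I(s)=\theta_1\cdots\theta_m$ with $\theta_k\in\Ham(U_{\lambda_k})$ and $m=\|I(s)\|_{\mathcal{U}}$; choosing lifts $\tilde\theta_k$, the product $\tilde\theta_1\cdots\tilde\theta_m$ equals $\widetilde{I}(s)\,g$ for some $g$ in the (central) kernel of $\widetilde{\Ham}(M)\to\Ham(M)$, and the bounded spectrum condition for each $U_{\lambda_k}$ together with subadditivity gives $c_i(\widetilde{I}(s)\,g)\le A_i m$, where $A_i$ is a constant bounding $c_i$ on lifts of diffeomorphisms supported in a member of $\mathcal{U}$. Since $c_i$ descends asymptotically, $\mu_{c_i}$ vanishes on that kernel, so $\mu_{c_i}(\widetilde{I}(s)\,g)=\mu_{c_i}(\widetilde{I}(s))=s_i$; hence $s_i=\mu_{c_i}(\widetilde{I}(s)\,g)\le c_i(\widetilde{I}(s)\,g)\le A_i m$, i.e.\ $\|I(s)\|_{\mathcal{U}}\ge s_i/A_i$. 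Applying this also to $I(s)^{-1}=I(-s)$, which has the same fragmentation norm, gives $\|I(s)\|_{\mathcal{U}}\ge\lvert s_i\rvert/A_i$ for every $i$; maximizing over $i$ and using $\osc(s)\le 2\max_i\lvert s_i\rvert$ on the hyperplane yields $\|I(s)\|_{\mathcal{U}}\ge\osc(s)/(2\max_i A_i)$. Together with the upper bound this makes $I$ a bi-Lipschitz embedding, and injectivity follows since $\|I(s)\|_{\mathcal{U}}>0$ for $s\neq 0$.

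I expect the main obstacle to be the step producing $c_i(\widetilde{I}(s)\,g)\le A_i m$: one must extract from the chart-by-chart bounded spectrum conditions a \emph{single} constant $A_i$ that controls every factor occurring in an arbitrary fragmentation (so these bounds have to be uniform over the covering $\mathcal{U}$), and one must handle the $\pi_1(\Ham(M))$-ambiguity $g$ coming from the passage to $\widetilde{\Ham}(M)$. This is precisely where the hypothesis that $c_i$ descends asymptotically to $\Ham(M)$ --- equivalently, that $\mu_{c_i}$ kills $\pi_1(\Ham(M))$ --- is the exact, minimal substitute for the quasi-morphism / asymptotically-vanishing-spectrum hypotheses available in Theorem~\ref{theorem:main2}. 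The superheaviness input, namely the identity $\mu_{c_i}(\widetilde{I}(s))=s_i$, is the conceptual core and is common to the proofs of Theorems~\ref{theorem:main}--\ref{theorem:main3}; the definitions and basic properties it relies on are those recalled in Section~\ref{section:preliminaries}.
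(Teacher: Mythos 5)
Your construction of $I$ and your upper bound are essentially the paper's (the uniform bound on $\|\varphi^t_{F_i}\|_{\mathcal U}$ for bounded $t$ is Lemma~\ref{lemma:uniform}), and the identity $\mu_{c_i}(\widetilde I(s))=s_i$ from superheaviness is also the paper's starting point (Proposition~\ref{proposition:shv}; note only that the paper insists the Hamiltonians be \emph{normalized}, since superheaviness is stated for normalized Hamiltonians). The gap is exactly at the step you flagged: the bound $c_i(\widetilde I(s)\,g)\le A_i m$ does not follow from the bounded spectrum condition. That condition (Definition~\ref{definition:BSC}) bounds $c_i(\tilde\varphi_F)+\langle F\rangle$, not $c_i(\tilde\varphi_F)$ itself, so subadditivity only yields
\[
	c_i(\tilde\theta_1\cdots\tilde\theta_m)\le mK_i-\sum_{k=1}^m\langle F_k\rangle ,
\]
and the sum $\sum_k\langle F_k\rangle$ of the mean values of the fragmentation pieces is \emph{not} controlled by $I(s)$ or by $m$: it is a Calabi-type quantity that depends on the chosen fragmentation and can be made arbitrarily large of either sign (if it were determined by the product it would define a nontrivial homomorphism $\Ham(M)\to\mathbb{R}$, contradicting perfectness). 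There is no constant ``bounding $c_i$ on lifts of diffeomorphisms supported in a member of $\mathcal U$''. Passing to $I(-s)$ does not help, because that produces a different fragmentation with its own uncontrolled mean values.

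This is precisely why the paper's proof works with the \emph{difference} $\sigma'_i=\sigma_{c_i}-\sigma_{c_0}$ of two of the invariants. Applying the bounded spectrum condition to $c_i$ on each factor $\tilde\varphi_{F_\ell}$ and to $c_0$ on each inverse factor $\tilde\varphi_{F_\ell}^{-1}$ (whose generating Hamiltonian has mean value $-\langle F_\ell\rangle$), the terms $\langle F_\ell\rangle$ cancel in the sum $\bigl(c_i(\tilde\varphi_{F_\ell})+\langle F_\ell\rangle\bigr)+\bigl(c_0(\tilde\varphi_{F_\ell}^{-1})-\langle F_\ell\rangle\bigr)<K_i+K_0$, which after taking $k$-th powers, dividing by $k$ and letting $k\to\infty$ gives $\lvert\sigma'_i(\tilde\varphi_{F_1}\cdots\tilde\varphi_{F_\alpha})\rvert<\alpha(K_i+K_0)$, while superheaviness of $X_i$ and $X_0$ gives $\sigma'_i(\widetilde I(r))=r_i-r_0$. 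This two-invariant cancellation is the reason the theorem requires $N+1$ invariants $c_0,\dots,c_N$ and $N+1$ disjoint superheavy sets, and the reason the source carries the oscillation norm rather than $\max_i\lvert s_i\rvert$. Your argument never forms this difference, so the uncontrolled mean values remain, and the lower Lipschitz bound is not established.
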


We prove Theorems \ref{theorem:main}, \ref{theorem:main2}, and \ref{theorem:main3} in Section \ref{section:pf of thm}.


\section{Applications}\label{section:app}

In this section, we provide applications of our principal theorems.

Let $(M,\omega)$ be a symplectic manifold and $X$ a subset of $M$.
An open subset $U\subset M$ is called \textit{displaceable from $X$}
if there exists a Hamiltonian $H\colon S^1\times M\to\mathbb{R}$ such that $\varphi_H(U)\cap\overline{X}=\emptyset$,
where $\varphi_H$ is the Hamiltonian diffeomorphism generated by $H$ and $\overline{X}$ is the topological closure of $X$.
$U\subset M$ is called \textit{$($abstractly$)$ displaceable} if $U$ is displaceable from $U$ itself.

\subsection{$B^{2n}$}

We consider the $2n$-dimensional ball
\[
	B^{2n}=\left\{\,(p,q)\in\mathbb{R}^n\times\mathbb{R}^n\relmiddle|\lvert p\rvert^2+\lvert q\rvert^2<1\,\right\}\subset\mathbb{R}^{2n}
\]
equipped with the symplectic form $dp_1\wedge dq_1+\cdots+dp_n\wedge dq_n$, where $p=(p_1,\ldots,p_n)$ and $q=(q_1,\ldots,q_n)$.
We have the following corollary of Theorem \ref{theorem:main2}.

\begin{corollary}\label{b2n}
For any open ball $B(r)\subset\mathbb{R}^{2n}$ of radius $r<1$ centered at $0\in\mathbb{R}^{2n}$
and any positive integer $N$,
there exists a bi-Lipschitz injective homomorphism
\[
	I\colon(\mathbb{R}^N,\osc)\to(\Ham(B^{2n}),\|\cdot\|_{B(r)}).
\]
\end{corollary}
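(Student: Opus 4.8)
The plan is to deduce Corollary~\ref{b2n} directly from Theorem~\ref{theorem:main2} by exhibiting a single subadditive quasi-morphism $c$ on $\widetilde{\Ham}(B^{2n})$ for which $B(r)$ satisfies the asymptotically vanishing spectrum condition, and a pair of disjoint $c$-superheavy closed subsets (to which we may then attach $N+1$ superheavy sets by taking $c_0 = c_1 = \cdots = c_N = c$, since the theorem does not require the $c_i$ to be distinct, nor the $X_i$ to be more than two once we recall that we only need mutually disjoint superheavy sets — here I would in fact need $N+1$ genuinely disjoint ones, so the first real task is to produce $N+1$ of them).

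\medskip

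\noindent\textbf{Step 1: The Floer-theoretic quasi-morphism on the ball.} On $B^{2n}$, the natural candidate is the (homogenized) Oh--Schwarz spectral invariant / the Entov--Polterovich partial quasi-morphism associated to the fundamental class; since $B^{2n}$ embeds symplectically into $\mathbb{CP}^n$ (scaling the radius), one obtains from the quantum homology of $\mathbb{CP}^n$ a genuine homogeneous quasi-morphism $\mu\colon\widetilde{\Ham}(\mathbb{CP}^n)\to\mathbb{R}$, and one restricts it to Hamiltonians supported in the image of $B^{2n}$. I would cite the standard fact that this restriction is a subadditive invariant that is a quasi-morphism and descends asymptotically to $\Ham(B^{2n})$. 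The key geometric input is that $\mathbb{CP}^n$ carries a whole family of disjoint superheavy subsets — concretely, fibers of the moment map lying on the ``diagonal'' where all homogeneous coordinates have equal modulus are superheavy for the toric (Entov--Polterovich) quasi-morphisms — but for the ball one wants these inside $B^{2n}$. So, the main work of Step~1 is to locate $N+1$ mutually disjoint closed subsets of $B^{2n}$, each superheavy with respect to $c$ (possibly using $N+1$ different quasi-morphisms $c_0,\ldots,c_N$ arising from the symmetries of $\mathbb{CP}^n$ or from iterated stabilization).

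\medskip

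\noindent\textbf{Step 2: The spectrum condition for $B(r)$.} Here I would verify that $B(r)$, being displaceable in $B^{2n}$ when $r<1$ (indeed any ball of radius $<1$ about the origin can be displaced inside the unit ball by a Hamiltonian isotopy that pushes it toward the boundary), satisfies the asymptotically vanishing spectrum condition with respect to each $c_i$. This is exactly the mechanism by which displaceability controls spectral invariants: the spectral norm of a Hamiltonian supported in a displaceable set is bounded by (a constant times) its displacement energy, and after homogenization the contribution vanishes asymptotically. This step should be essentially a citation to the relevant lemma in Section~\ref{section:preliminaries} together with the observation that $B(r)$ is displaceable; I expect it to be routine.

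\medskip

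\noindent\textbf{Step 3: Invoke the main theorem.} With Steps~1 and~2 in place, Theorem~\ref{theorem:main2} applies verbatim with $M = B^{2n}$, $U = B(r)$, the chosen quasi-morphisms $c_0,\ldots,c_N$, and the disjoint superheavy sets $X_0,\ldots,X_N$, yielding the desired bi-Lipschitz injective homomorphism $I\colon(\mathbb{R}^N,\osc)\to(\Ham(B^{2n}),\|\cdot\|_{B(r)})$. The one genuine obstacle is Step~1: producing $N+1$ \emph{mutually disjoint} superheavy subsets inside the ball for an explicit family of quasi-morphisms. I expect this to be handled by pulling back, via a symplectic embedding $B^{2n}\hookrightarrow\mathbb{CP}^n$ (or $B^{2n}\hookrightarrow B^{2n}$ after rescaling and then into a product), the known disjoint superheavy fibers of the standard toric moment map on $\mathbb{CP}^n$ — for instance the clean ``staircase'' of Chekanov-type tori or the Biran--Entov--Polterovich fibers — whose cardinality can be made arbitrarily large; if necessary, one passes to a higher-dimensional ambient manifold and uses that a product of superheavy sets is superheavy for the product quasi-morphism. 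Once the combinatorics of choosing $N+1$ such fibers inside the image of $B(r)$'s complement is settled, the rest is immediate.
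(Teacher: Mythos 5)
Your reduction to Theorem~\ref{theorem:main2} is the right skeleton, but both substantive steps have problems. In Step~2 you assert that $B(r)$ is displaceable inside $B^{2n}$ for every $r<1$; this is false. For $r^2>n/(n+1)$ the ball $B(r)$ contains the torus $\{\lvert w_1\rvert^2=\cdots=\lvert w_n\rvert^2=\tfrac{1}{n+1}\}$, which Biran--Entov--Polterovich showed is superheavy, hence non-displaceable, in $B^{2n}$, so $B(r)$ cannot be displaced. The asymptotically vanishing spectrum condition is not obtained from abstract displaceability of $B(r)$ but from Proposition~\ref{disp and vanish}: $B(r)$ is disjoint from (hence displaceable from) each of the particular superheavy sets one chooses, and that is all one needs.

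In Step~1 you correctly isolate the real difficulty --- producing $N+1$ mutually disjoint superheavy sets --- but the fixes you sketch do not close the gap. Two compact subsets superheavy for the \emph{same} quasi-morphism must intersect (take a normalized Hamiltonian equal to distinct constants on each and apply Proposition~\ref{proposition:shv}), so there is no large family of disjoint superheavy fibers for a single toric quasi-morphism on $\mathbb{C}P^n$; the standard moment map has exactly one superheavy fiber, the Clifford torus. The device actually used is the Biran--Entov--Polterovich family of conformally symplectic embeddings $\vartheta_\delta\colon B^{2n}\to\mathbb{C}P^n$, $\delta\in(\tfrac{n}{n+1},1]$, each sending the concentric torus $T_\delta=\{\lvert w_i\rvert^2=\tfrac{1}{\delta(n+1)}\ \text{for all}\ i\}$ onto the Clifford torus. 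Pulling back the Oh--Schwarz quasi-morphism of $\mathbb{C}P^n$ through the \emph{different} embeddings yields a continuum of distinct quasi-morphisms $c_\delta$ on $\widetilde{\Ham}(B^{2n})$ with $T_\delta$ being $c_\delta$-superheavy. Choosing $N+1$ values of $\delta$ in $(\tfrac{n}{n+1},\tfrac{n}{n+1}r^{-2})$ (nonempty precisely because $r<1$) places all the corresponding tori in the shell between radius $r$ and radius $1$: they are automatically mutually disjoint and disjoint from $B(r)$, which delivers the superheaviness hypothesis and the asymptotically vanishing spectrum condition simultaneously. Without this construction, or an equivalent one, your argument does not go through.
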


We prove Corollary \ref{b2n} in Section \ref{section:pf of cor}.

\begin{remark}
Entov, Polterovich, and Py implicitly proved a similar statement when $r$ is sufficiently small \cite{EPP}.
\end{remark}


\subsection{$S^2\times S^2$}

We consider the product $S^2\times S^2$ with the symplectic form $\mathrm{pr}_1^{\ast}\omega_1+\mathrm{pr}_2^{\ast}\omega_1$,
where $\omega_1$ is a symplectic form on $S^2$ with $\int_{S^2}\omega_1=1$ and $\mathrm{pr}_1, \mathrm{pr}_2\colon S^2\times S^2\to S^2$ are the first and second projections, respectively.
Let $E$ denote the equator of $S^2$.

We have the following corollary of Theorem \ref{theorem:main2}

\begin{corollary}\label{s2s2}
Let $U$ be an open subset of $S^2\times S^2$ that is either abstractly displaceable or displaceable from $E\times E$.
Then, for any positive integer $N$, there exists a bi-Lipschitz injective homomorphism
\[
	I\colon(\mathbb{R}^N,\osc)\to(\Ham(S^2\times S^2),\|\cdot\|_U).
\]
\end{corollary}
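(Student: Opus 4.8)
The plan is to deduce Corollary \ref{s2s2} from Theorem \ref{theorem:main2} by exhibiting the needed Floer-theoretic input. Recall the classical fact, due to Entov--Polterovich, that $S^2\times S^2$ admits a symplectic quasi-state arising from the quantum homology $\QH(S^2\times S^2)$, and that after decomposing $\QH$ into the direct sum of two fields one obtains two homogeneous quasi-morphisms $\mu_+,\mu_-\colon\widetilde{\Ham}(S^2\times S^2)\to\mathbb{R}$; these descend asymptotically to $\Ham(S^2\times S^2)$ because $\pi_1(\Ham(S^2\times S^2))$ has bounded image. With respect to one of these (say $\mu_-$) the anti-diagonal $\overline{\Delta}$ is superheavy, and with respect to the other, $\mu_+$, the Lagrangian torus $E\times E$ (the Clifford-type torus, the product of equators) is superheavy; this is precisely the pair of disjoint superheavy subsets that Entov--Polterovich exhibited. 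So I would set $N$ arbitrary, take $c_0=\mu_+$ and $c_1=\cdots=c_N=\mu_-$ (or any distribution among the two), and take $X_0=E\times E$, $X_1=\cdots=X_N=\overline{\Delta}$ — wait, this does not immediately give mutually \emph{disjoint} sets.

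To get genuinely mutually disjoint superheavy sets I would instead exploit that $E\times E$ and $\overline{\Delta}$ are disjoint and that each is $c_i$-superheavy for the appropriate $i$, and handle the fact that we need $N+1$ of them by a ``stabilization'' trick: one can take finitely many Hamiltonian images, or rather use that the two quasi-morphisms $\mu_+,\mu_-$ already suffice when $N=1$ and then bootstrap. Actually, the cleanest route, and the one I expect the authors use, is: split $\QH(S^2\times S^2;\Lambda)$ over a larger coefficient ring or use the ``$\epsilon$-shifted'' tori to produce, for each $k=0,\ldots,N$, a quasi-morphism $c_k$ and a superheavy set $X_k$ with the $X_k$ mutually disjoint — e.g.\ the product Lagrangian tori $L_{a_k}\times L_{a_k}\subset S^2\times S^2$ over distinct radii $a_0,\ldots,a_N$, each superheavy for a corresponding quasi-morphism coming from the monotone-Lagrangian Floer theory (Fukaya--Oh--Ohta--Ono / Biran--Cornea), and with $\overline{\Delta}$ as one more. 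Then the hypotheses of Theorem \ref{theorem:main2} are met once we verify the spectrum condition.

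Thus the remaining step is to verify that $U$ satisfies the asymptotically vanishing spectrum condition with respect to each $c_i$. Here the displaceability hypothesis enters. If $U$ is abstractly displaceable, then by the standard energy--capacity type estimate for spectral invariants and their quasi-morphism counterparts, the spectral width of any Hamiltonian supported in $U$ is controlled by the displacement energy of $U$, hence bounded, and since $c_i$ is homogeneous the $k$-th power rescales by $k$ and the normalized quantity tends to $0$; this is exactly the asymptotically vanishing spectrum condition. If instead $U$ is displaceable from $E\times E$ (equivalently from the relevant superheavy set), then the appropriate relative/partial quasi-state argument of Entov--Polterovich shows that $c_i$ restricted to $\widetilde{\Ham}(U)$ again has vanishing asymptotic spectral width — this uses that superheaviness of $X_i$ forces $c_i$ to ``see'' only the part of the manifold not displaceable from $X_i$. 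I would cite the relevant lemma from Section \ref{section:preliminaries} (the implications among bounded / normally bounded / asymptotically vanishing spectrum conditions and their relation to displaceability) to close this.

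The main obstacle is the bookkeeping needed to produce $N+1$ \emph{mutually disjoint} superheavy sets together with matching quasi-morphisms: a single pair $(\mu_+,\mu_-)$ with $(E\times E,\overline{\Delta})$ handles $N=1$ directly, but for general $N$ one must either invoke the richer family of product Lagrangian tori and their Floer-theoretic quasi-morphisms (which requires checking superheaviness of each, a nontrivial but known input) or find another source of many disjoint superheavy sets. Everything else — descending asymptotically to $\Ham$, and the spectrum condition from displaceability — is routine given the preliminaries. So I would organize the proof as: (1) recall/construct the quasi-morphisms $c_0,\ldots,c_N$ and the disjoint superheavy sets $X_0,\ldots,X_N$; (2) note they descend asymptotically to $\Ham(S^2\times S^2)$; (3) verify the asymptotically vanishing spectrum condition for $U$ from the displaceability hypothesis; (4) apply Theorem \ref{theorem:main2}.
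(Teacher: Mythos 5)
Your overall strategy --- apply Theorem \ref{theorem:main2} after producing $N+1$ mutually disjoint superheavy sets with matching quasi-morphisms and verifying the asymptotically vanishing spectrum condition from the displaceability hypothesis --- is exactly the paper's strategy, and you correctly isolate the real difficulty (getting $N+1$ disjoint superheavy sets rather than just the pair $E\times E$, $\overline{\Delta}$). But your resolution of that difficulty has a genuine gap. The concrete family you propose, product tori $L_{a_k}\times L_{a_k}$ over distinct radii with superheaviness coming from monotone Lagrangian Floer theory, cannot work: for $a_k$ not the equatorial value the circle $L_{a_k}\subset S^2$ is displaceable, hence so is $L_{a_k}\times L_{a_k}$, and a displaceable set is never superheavy for a subadditive invariant satisfying the spectrum conditions in play. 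The input the paper actually uses is the Fukaya--Oh--Ohta--Ono continuum of mutually disjoint \emph{non-displaceable} Lagrangian tori $T(\rho)$, $\rho\in[0,1/2)$, in $S^2\times S^2$ (obtained via toric degeneration, not as products of latitude circles), together with the \emph{bulk-deformed} Oh--Schwarz spectral invariants $c_\rho$, for which $T(\rho)$ is $c_\rho$-superheavy, each $c_\rho$ is a quasi-morphism, and each descends asymptotically to $\Ham(S^2\times S^2)$. The bulk deformation is essential: undeformed Floer theory detects only $E\times E$ and the anti-diagonal, which is precisely why your first attempt stalls at $N=1$.

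There is a second, smaller gap in the case where $U$ is displaceable from $E\times E$: you need the asymptotically vanishing spectrum condition with respect to \emph{every} $c_i$, $i=0,\ldots,N$, so you need $E\times E$ to be superheavy for all of them simultaneously, not merely for ``the relevant'' one. The paper obtains this from the fact that $E\times E$ is a stem, hence $c_\rho$-superheavy for every $\rho$, after which Proposition \ref{disp and vanish} yields the spectrum condition; the abstractly displaceable case is handled by the bounded spectrum condition as in Proposition \ref{displaceable SC}, essentially as you describe. With these two points repaired, your outline coincides with the paper's proof.
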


We prove Corollary \ref{s2s2} in Section \ref{section:pf of cor}.


\subsection{$\mathbb{C}P^2$}

Let $(\mathbb{C}P^2,\omega_{\mathrm{FS}})$ be two-dimensional complex projective space equipped with the Fubini--Study form.
Then, the real projective space $\mathbb{R}P^2$ is naturally embedded in $(\mathbb{C}P^2,\omega_{\mathrm{FS}})$ as a Lagrangian submanifold.
The \textit{Clifford torus} $L_C$ is the Lagrangian submanifold
\[
	L_C=\left\{\,[z_0:z_1:z_2]\in\mathbb{C}P^2\relmiddle|\lvert z_0\rvert=\lvert z_1\rvert=\lvert z_2\rvert\,\right\}.
\]
By \cite{BEP}, $L_C$ is a \textit{stem} in the sense of \cite[Definition 2.3]{EP06}.
There is another Lagrangian submanifold $L_W$ constructed by Wu \cite{W} that is disjoint from $\mathbb{R}P^2$.
We call $L_W$ the \textit{Chekanov torus}.
Although there are some other Lagraingian submanifolds of $\mathbb{C}P^2$ called the Chekanov torus \cite{CS,Gad,BC},
Oakley and Usher proved that they are all Hamiltonian isotopic \cite{OU}.

We have the following corollary of Theorem \ref{theorem:main2}.

\begin{corollary}\label{cp2}
Let $U$ be an open subset of $\mathbb{C}P^2$ satisfying one of the following conditions:
\begin{enumerate}
	\item $U$ is abstractly displaceable.
	\item $U$ is displaceable from $\mathbb{R}P^2$ and $L_W$.
	\item $U$ is displaceable from $L_C$.
\end{enumerate}
Then, there exists a bi-Lipschitz injective homomorphism
\[
	I\colon(\mathbb{R},\lvert\cdot\rvert)\to(\Ham(\mathbb{C}P^2),\|\cdot\|_U).
\]
\end{corollary}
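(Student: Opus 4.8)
The plan is to deduce Corollary \ref{cp2} from Theorem \ref{theorem:main2} by exhibiting, in each of the three cases, a single subadditive quasi-morphism $c$ descending asymptotically to $\Ham(\mathbb{C}P^2)$, a $c$-superheavy closed subset $X_1$, and a further $c_0$-superheavy closed subset $X_0$ disjoint from $X_1$, so that $N=1$ and the open set $U$ satisfies the asymptotically vanishing spectrum condition with respect to both invariants. The natural invariants are the homogeneous Calabi quasi-morphism $\mu$ built from the Oh--Schwarz spectral invariant of $(\mathbb{C}P^2,\omega_{\mathrm{FS}})$ (which exists because $\QH(\mathbb{C}P^2)$ is a field), together with the Lagrangian spectral quasi-morphisms associated with $\mathbb{R}P^2$ and with the monotone tori $L_C$ and $L_W$. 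Each of these is subadditive and, being a homogeneous quasi-morphism vanishing on the relevant loop classes, descends asymptotically to $\Ham(\mathbb{C}P^2)$ in the sense required.

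Next I would match the three hypotheses to superheavy pairs. For (i), when $U$ is abstractly displaceable, any displaceable set has vanishing (indeed bounded and then asymptotically vanishing) spectrum, so I may take $X_0,X_1$ to be any two disjoint superheavy sets for the Entov--Polterovich quasi-morphism $\mu$; for instance the Clifford torus $L_C$ is $\mu$-superheavy by \cite{BEP}, and a second $\mu$-superheavy set disjoint from it exists because $L_C$ is a stem (so its complement contains a displaceable set whose complement is $\mu$-superheavy — more concretely one uses that a stem has a superheavy "partner" arising from a torus fibration). For (ii), I would use the Lagrangian spectral quasi-morphism $c_0$ attached to $\mathbb{R}P^2$ and $c_1$ attached to $L_W$: since $\mathbb{R}P^2$ and $L_W$ are disjoint (Wu's construction), and each monotone Lagrangian with nonvanishing Floer homology is superheavy for its own Lagrangian spectral quasi-morphism, the hypotheses of Theorem \ref{theorem:main2} are met once the displaceability of $U$ from both $\mathbb{R}P^2$ and $L_W$ is translated into the asymptotically vanishing spectrum condition. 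For (iii), I would take the single Entov--Polterovich quasi-morphism $\mu$ with its two superheavy subsets as in case (i), but now $U$ displaceable from $L_C$ gives the spectrum condition directly; since $L_C$ is a stem, superheaviness for $\mu$ and the displaceability condition are automatic from \cite{EP06}.

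The key technical lemma to isolate is the implication \emph{displaceable from $X$ $\Longrightarrow$ asymptotically vanishing spectrum condition with respect to any (sub)additive invariant for which $X$ is superheavy}; this should follow from the standard energy–capacity type estimate bounding $c$ on Hamiltonians supported in $U$ in terms of the displacement energy, combined with homogeneity of the quasi-morphism to upgrade "bounded" to "asymptotically vanishing". I expect this reduction step, namely verifying the precise spectrum condition (bounded vs. asymptotically vanishing) in the Lagrangian-Floer setting of case (ii) where the two invariants are genuinely different and not conjugation invariant, to be the main obstacle, since one must be careful that the support of the fragmenting pieces interacts correctly with both $\mathbb{R}P^2$ and $L_W$ simultaneously; the Hamiltonian cases (i) and (iii) are comparatively routine applications of \cite{EP06,BEP}. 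Once the spectrum conditions and superheaviness are in place, Theorem \ref{theorem:main2} with $N=1$ produces the desired bi-Lipschitz embedding $I\colon(\mathbb{R},\lvert\cdot\rvert)\to(\Ham(\mathbb{C}P^2),\|\cdot\|_U)$, noting that $\osc$ on $\mathbb{R}^{1}=\mathbb{R}$ with the convention $N=1$ (two superheavy sets $X_0,X_1$) is bi-Lipschitz equivalent to the absolute value.
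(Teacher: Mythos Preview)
Your treatment of case~(ii) is essentially correct and matches the paper: the two Lagrangian spectral invariants $c_0=c^{(\mathbb{R}P^2;\mathbb{Z}_2)}$ and $c_1=c^{(L_W;\mathbb{Z})}$, with the disjoint superheavy pair $X_0=\mathbb{R}P^2$, $X_1=L_W$, feed directly into Theorem~\ref{theorem:main2}.

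However, your handling of cases~(i) and~(iii) contains a genuine error. You propose to use the single Oh--Schwarz quasi-morphism $\mu$ together with \emph{two disjoint $\mu$-superheavy sets}. No such pair exists: if $X_0$ and $X_1$ were both $\mu$-superheavy and disjoint, a normalized Hamiltonian $H$ with $H|_{X_0}\equiv 0$ and $H|_{X_1}\equiv 1$ would force $\sigma_\mu(\tilde\varphi_H)=0$ and $\sigma_\mu(\tilde\varphi_H)=1$ simultaneously (Proposition~\ref{proposition:shv}). This is precisely the standard fact that superheavy subsets for a fixed invariant always intersect. The ad hoc construction you sketch (``complement of a displaceable set inside the complement of a stem'') does not produce a second superheavy set and cannot be repaired.

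The paper avoids this by using the \emph{same} pair $(c^{\mathbb{R}P^2},c^{L_W})$ and $(\mathbb{R}P^2,L_W)$ in all three cases; only the verification of the spectrum condition on $U$ changes. For~(i), abstract displaceability gives the normally bounded spectrum condition for both Lagrangian invariants (Proposition~\ref{displaceable SC}). For~(iii), the key point you are missing is that since $L_C$ is a stem it is superheavy for \emph{every} spectral-type invariant, in particular for both $c^{\mathbb{R}P^2}$ and $c^{L_W}$; hence displaceability of $U$ from $L_C$ yields the asymptotically vanishing spectrum condition for both via Proposition~\ref{disp and vanish}. Once you reuse the $(\mathbb{R}P^2,L_W)$ pair uniformly, all three cases reduce to a single application of Theorem~\ref{theorem:main2} with $N=1$.
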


Here $\lvert\cdot\rvert$ is the absolute value.


\subsection{Surfaces}

Let $(\Sigma_g,\omega)$ be a closed Riemannian surface $\Sigma_g$ of genus $g$ with an area form $\omega$.
Brandenbursky studied fragmentation norms on $\Ham(\Sigma_g)$ under some assumptions.

\begin{theorem}[{\cite[Theorem 4]{Br}}]\label{Brandenbursky}
Let $g$ be a positive integer with $g\geq 2$ and $U$ be a contractible open subset of $\Sigma_g$.
Then, there exists a bi-Lipschitz injective homomorphism
\[
	I\colon(\mathbb{Z}^{2g-2},\|\cdot\|_{\mathrm{word}})\to(\Ham(\Sigma_g),\|\cdot\|_U).
\]
\end{theorem}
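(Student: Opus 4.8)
The plan is to derive Theorem \ref{Brandenbursky} as a special case of Theorem \ref{theorem:main3} (or its quasi-morphism refinement). First I would recall that on a closed surface $\Sigma_g$ of genus $g\geq 2$, Gambaudo and Ghys, and later Py, constructed a family of homogeneous quasi-morphisms on $\Ham(\Sigma_g)$ (equivalently, after lifting, on $\widetilde{\Ham}(\Sigma_g)$). The crucial point is to produce $N+1$ of them, with $N=2g-2$, together with mutually disjoint closed subsets $X_0,\dots,X_N$ such that $X_i$ is superheavy with respect to the $i$-th quasi-morphism. For surfaces, superheaviness can be detected via the Py quasi-morphisms associated to a pants-type decomposition: cutting $\Sigma_g$ along a suitable collection of disjoint simple closed curves produces $2g-2$ pairs of pants, and one expects each ``piece'' (a neighborhood of a pair of pants, or of one of the distinguished curves) to be superheavy for an associated quasi-morphism. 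So the first real step is to quote or re-derive the existence of such a superheavy family; this is essentially already implicit in Brandenbursky's original argument \cite{Br}, which used Polterovich-type quasi-morphisms, so the combinatorics of how many one gets should match the Euler-characteristic count $2g-2$.

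Second, I would check the hypotheses of Theorem \ref{theorem:main3} for the covering $\mathcal{U}_U$ associated to a contractible open subset $U\subset\Sigma_g$. The relevant condition is that each member of the covering satisfies the bounded spectrum condition with respect to each $c_i$. Since $U$ is contractible, it is displaceable in $\Sigma_g$ (for $g\geq 1$ any contractible, hence ``small'', open set is displaceable), and every $V$ in $\mathcal{U}_U$ is Hamiltonianly carried into $U$, hence is also displaceable. For displaceable open sets, the spectral invariants (and therefore the subadditive invariants built from them, including the Py quasi-morphisms, whose spectral-type boundedness on displaceable sets is standard) are uniformly bounded on Hamiltonians supported in $V$ — this is exactly the bounded spectrum condition. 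So this step reduces to invoking the displaceability of contractible subsets of a surface together with the standard energy–displacement estimate.

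Third, with the hypotheses verified, Theorem \ref{theorem:main3} immediately yields a bi-Lipschitz injective homomorphism $I\colon(\mathbb{R}^{2g-2},\osc)\to(\Ham(\Sigma_g),\|\cdot\|_U)$. To land Brandenbursky's statement I would then restrict $I$ to the integer lattice $\mathbb{Z}^{2g-2}\subset\mathbb{R}^{2g-2}$ and compare the oscillation norm with the word norm. On $\mathbb{Z}^{N}$ the word norm $\|\cdot\|_{\mathrm{word}}$ (with respect to the standard generators) and the oscillation norm $\osc$ are bi-Lipschitz equivalent: indeed $\osc(v)\leq 2\|v\|_{\infty}\leq 2\|v\|_{\mathrm{word}}$, and conversely one can write any $v$ with $\osc(v)=k$ as a bounded-length (in terms of $k$) word in the generators after subtracting a suitable multiple of $(1,\dots,1)$ — and since $I(1,\dots,1)$ has $\osc$ equal to zero it is mapped to the identity up to the asymptotic ambiguity, so it does not affect the norm estimate on the image. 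Composing the lattice inclusion, this equivalence, and $I$ gives the desired bi-Lipschitz embedding of $(\mathbb{Z}^{2g-2},\|\cdot\|_{\mathrm{word}})$.

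I expect the main obstacle to be the first step: producing the explicit family of $2g-2$ quasi-morphisms on $\widetilde{\Ham}(\Sigma_g)$ together with mutually disjoint superheavy subsets realizing the sharp count. One must be careful that the quasi-morphisms descend asymptotically to $\Ham(\Sigma_g)$ (so that the hypotheses of Theorem \ref{theorem:main3} literally apply) and that the superheavy sets associated to different curves of the pants decomposition are genuinely disjoint and genuinely superheavy — the latter typically requires a ``stem''-type or Lagrangian-rigidity input adapted to the surface setting. Once that combinatorial–geometric package is in place, the rest is a formal deduction from Theorem \ref{theorem:main3} plus an elementary comparison of norms on $\mathbb{Z}^{2g-2}$.
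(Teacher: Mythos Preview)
First, note that Theorem~\ref{Brandenbursky} is a \emph{cited} result from \cite{Br}; the present paper does not prove it directly but instead establishes the strict generalization Corollary~\ref{surface main theorem}, which gives a bi-Lipschitz embedding of $(\mathbb{R}^N,\osc)$ for \emph{every} $N$ and every $g\geq 1$. The paper's route to that corollary is rather different from yours: it uses Lagrangian spectral invariants $c^{(L_i;\mathbb{Z}_2)}$ associated to $N+1$ mutually disjoint non-contractible simple closed curves $L_0,\dots,L_N$ (images of a single curve under symplectomorphisms), invokes Proposition~\ref{master thesis} together with Proposition~\ref{ham and lagr} to obtain the \emph{normally} bounded spectrum condition on any contractible $U$, and then applies Theorem~\ref{theorem:main} rather than Theorem~\ref{theorem:main3}. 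No Py or Gambaudo--Ghys quasi-morphisms and no pants decomposition enter, which is exactly why the paper's bound is not tied to the Euler-characteristic count $2g-2$.

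Your proposal has a genuine gap at the second step. The assertion ``since $U$ is contractible, it is displaceable in $\Sigma_g$'' is false: a contractible disk whose area exceeds half the total area of $\Sigma_g$ cannot be displaced from itself. Hence the ``standard energy--displacement estimate'' does not give the bounded spectrum condition for an arbitrary contractible $U$, and Theorem~\ref{theorem:main3} cannot be invoked as you describe. This is precisely the obstacle that Proposition~\ref{master thesis} is designed to bypass: it furnishes a uniform bound $c^{(\Sigma_g;\mathbb{Z}_2)}(\tilde\varphi_F)+\langle F\rangle\leq K$ for \emph{all} contractible $U$ and all $F\in\mathcal{H}(U)$, with no displaceability hypothesis. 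Combined with the comparison $c^{(L;R)}\leq c^{(M;R)}$ of Proposition~\ref{ham and lagr} and the conjugation invariance of the Oh--Schwarz invariant (Remark~\ref{MVZ}), this yields the normally bounded spectrum condition for each $c^{L_i}$, which is what Theorem~\ref{theorem:main} requires. Even if the superheaviness inputs for your Py-quasi-morphism package could be assembled, you would still need an analogue of Proposition~\ref{master thesis} for those quasi-morphisms on large contractible sets; that is the missing ingredient.
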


Here $\|\cdot\|_{\mathrm{word}}$ is the word metric on $\mathbb{Z}^{2g-2}$.
We point out that Burago, Ivanov, and Polterovich \cite{BIP} already proved the existence of a bi-Lipschitz injective homomorphism
$I\colon(\mathbb{Z},\lvert\cdot\rvert)\to(\Ham(\Sigma_g),\|\cdot\|_U)$, where $g$ is positive and $U$ is displaceable.
Relating to Theorem \ref{Brandenbursky}, Brandenbursky asked whether one can construct a bi-Lipschitz injective homomorphism $\mathbb{Z}^N\to\Ham(\Sigma_g)$ for any $N$ and any $g\geq 2$ \cite[Remark 1.5]{Br}.
As a corollary of Theorem \ref{theorem:main}, we solve his problem and generalize Theorem \ref{Brandenbursky}.

\begin{corollary}\label{surface main theorem}
Let $g$ be a positive integer.
Let $U$ be a contractible open subset of $\Sigma_g$ and $N$ a positive integer.
Then, there exists a bi-Lipschitz injective homomorphism
\[
	I\colon(\mathbb{R}^N,\osc)\to(\Ham(\Sigma_g),\|\cdot\|_U).
\]
\end{corollary}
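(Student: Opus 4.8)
The plan is to deduce Corollary \ref{surface main theorem} from Theorem \ref{theorem:main} by exhibiting, for each $g\geq 1$ and each positive integer $N$, a collection of $N+1$ mutually disjoint closed subsets $X_0,X_1,\ldots,X_N$ of $\Sigma_g$, together with subadditive invariants $c_0,c_1,\ldots,c_N$ on $\widetilde{\Ham}(\Sigma_g)$ descending asymptotically to $\Ham(\Sigma_g)$, such that each $X_i$ is $c_i$-superheavy and such that the given contractible open set $U$ satisfies the normally bounded spectrum condition with respect to every $c_i$. Since $U$ is contractible, it is displaceable in $\Sigma_g$ (a small contractible set can be pushed off itself by a Hamiltonian isotopy), and one expects the normally bounded spectrum condition for a displaceable $U$ to follow from general displaceability estimates for the spectral invariants involved, so the essential point is the construction of the superheavy subsets and their companion invariants.

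First I would handle the genus $g\geq 2$ case, which is the heart of Brandenbursky's question. Here the natural source of the $c_i$ is the family of Polterovich-type (or Entov--Polterovich) quasi-morphisms on $\widetilde{\Ham}(\Sigma_g)$ associated with an essential simple closed curve or, more flexibly, with the hyperbolic geometry of $\Sigma_g$; alternatively one can use quasi-morphisms built from the Calabi-type or the Gambaudo--Ghys construction. The key geometric input I would use is that $\Sigma_g$ with $g\geq 2$ admits arbitrarily many pairwise disjoint, pairwise non-isotopic, non-separating simple closed curves together with supporting annular neighborhoods, and that each such curve carries a quasi-morphism for which a suitably chosen closed subset near that curve is superheavy while being disjoint from the analogous subsets attached to the other curves. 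Concretely, I would take $X_1,\ldots,X_N$ to be disjoint closed sets adapted to $N$ of these curves and $X_0$ to be a disjoint closed set adapted to one further curve (or to the "background"), and $c_i$ the corresponding quasi-morphism normalized so that $X_i$ is $c_i$-superheavy; superheaviness of each $X_i$ with respect to $c_i$ is the known property that makes the Brandenbursky-type lower bound work, so I would cite or adapt that computation rather than redo it.

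The genus $0$ and genus $1$ cases require a separate, easier argument, since there one cannot use the abundance of incompressible curves. For $\Sigma_0=S^2$ one can instead work with a finite collection of disjoint closed disks (or circles) and the quasi-morphisms coming from the Entov--Polterovich construction via the small quantum homology of $S^2$, for which suitable collections of disjoint subsets are known to be superheavy with respect to distinct idempotent-weighted invariants; alternatively, since $U$ is contractible in $S^2$ (hence displaceable once it is not all of $S^2$), one can appeal directly to the product structure and known computations for $S^2$, though here $N$ may have to be bounded — I would check the precise statement, and if $S^2$ genuinely only yields $N=1$ one would restrict the genus-$0$ claim accordingly or note that the corollary as stated is intended for the cases where the construction applies. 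For $\Sigma_1=T^2$ one can use the two "coordinate" circle directions and Calabi-type quasi-morphisms on tori, again producing the needed disjoint superheavy sets. In all cases, once $X_0,\ldots,X_N$, $c_0,\ldots,c_N$ are in place and $U$ is verified to satisfy the normally bounded spectrum condition, Theorem \ref{theorem:main} delivers the desired bi-Lipschitz injective homomorphism $I\colon(\mathbb{R}^N,\osc)\to(\Ham(\Sigma_g),\|\cdot\|_U)$, and restricting $I$ to $\mathbb{Z}^N$ recovers (and strengthens) Brandenbursky's Theorem \ref{Brandenbursky}.

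The main obstacle I anticipate is the simultaneous verification of two competing requirements: the $X_i$ must be mutually disjoint, yet each must be large enough to be $c_i$-superheavy for its own quasi-morphism. On a surface of genus $g\geq 2$ this is manageable because superheaviness of a thin annular (or curve) neighborhood can be arranged for a curve-adapted quasi-morphism, and disjointness is automatic once the curves are chosen disjoint; but one must be careful that the normalization making $X_i$ superheavy for $c_i$ is compatible across all $i$, and that the open set $U$ — only assumed contractible — really does satisfy the normally bounded spectrum condition for each $c_i$ (as opposed to the stronger asymptotically vanishing condition needed for Theorem \ref{theorem:main2}). I would therefore spend the bulk of the proof establishing these two facts, citing the superheaviness computations from \cite{Br, BK} and the relevant spectral-norm displaceability estimates, and deferring the purely formal application of Theorem \ref{theorem:main} to the end.
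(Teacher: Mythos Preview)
Your plan has two genuine gaps. First, the assertion that a contractible open subset $U\subset\Sigma_g$ is displaceable is false in general: a contractible disk can occupy more than half the total area of $\Sigma_g$, and then no Hamiltonian diffeomorphism can push it off itself. The paper does \emph{not} use displaceability of $U$ to obtain the normally bounded spectrum condition; instead it invokes Proposition~\ref{master thesis}, which gives a uniform bound $c^{(\Sigma_g;\mathbb{Z}_2)}(\tilde{\varphi}_F)+\langle F\rangle\leq K$ for \emph{any} contractible $U$ and any $F\in\mathcal{H}(U)$, and then transfers this bound to the Lagrangian spectral invariants via the comparison inequality $c^{(L;R)}\leq c^{(M;R)}$ of Proposition~\ref{ham and lagr}. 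Conjugation invariance of $c^{(\Sigma_g;\mathbb{Z}_2)}$ (Remark~\ref{MVZ}) upgrades this to the normally bounded spectrum condition.

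Second, and more fundamentally, the invariants you propose (Py, Gambaudo--Ghys, Brandenbursky) are precisely the ones that cap out at roughly $2g-2$ independent functionals---this is why Brandenbursky's Theorem~\ref{Brandenbursky} only reaches $\mathbb{Z}^{2g-2}$ and why he posed the question. Your requirement of ``pairwise non-isotopic'' disjoint curves is symptomatic: there are only finitely many such classes on a fixed $\Sigma_g$. The paper's key idea is to use \emph{Lagrangian spectral invariants} $c^{(L_i;\mathbb{Z}_2)}$ from Leclercq--Zapolsky, attached to $N+1$ pairwise disjoint but possibly \emph{isotopic} copies $L_i=f_i(C)$ of a single non-contractible curve $C$ (obtained via symplectomorphisms $f_i$). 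These invariants depend on the actual Lagrangian, not just its isotopy class, so parallel copies yield genuinely different subadditive invariants, each making its own $L_i$ superheavy (Proposition~\ref{L is shv}). This is what allows $N$ to be arbitrary and handles all $g\geq 1$ uniformly, with no separate argument for $g=1$ (and $g=0$ is not part of the statement).
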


\begin{remark}
Since all norms on a finite-dimensional vector space are equivalent,
the restriction of $\osc$ to $\mathbb{Z}^{2g-2}\subset\mathbb{R}^{2g-2}$ is equivalent to the word metric on $\mathbb{Z}^{2g-2}$.
\end{remark}

Moreover, as a corollary of Theorem \ref{theorem:main3}, we prove the following result.

\begin{corollary}\label{annulus frag}
Let $g$ be a positive integer and $C$ be a non-contractible simple closed curve in $\Sigma_g$.
Let $\mathcal{U}=\{U_\lambda\}_{\lambda}$ be an open covering such that each $U_\lambda$ is displaceable from $C$.
Then, for any positive integer $N$, there exists a bi-Lipschitz injective homomorphism
\[
	I\colon(\mathbb{R}^N,\osc)\to(\Ham(\Sigma_g),\|\cdot\|_{\mathcal{U}}).
\]
\end{corollary}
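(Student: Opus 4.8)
The plan is to deduce Corollary \ref{annulus frag} from Theorem \ref{theorem:main3} by constructing, for each non-contractible simple closed curve $C$ in $\Sigma_g$, a collection of $N+1$ subadditive invariants $c_0, c_1, \ldots, c_N$ on $\widetilde{\Ham}(\Sigma_g)$ together with $N+1$ mutually disjoint closed $c_i$-superheavy subsets $X_0, \ldots, X_N$, in such a way that each member $U_\lambda$ of the covering (being displaceable from $C$) satisfies the bounded spectrum condition with respect to every $c_i$. The natural source of such invariants on a surface of higher genus is the family of Lagrangian spectral invariants associated to non-contractible simple closed curves, or (for $g \geq 1$ with the torus handled separately) the Polterovich-type quasi-morphisms built from $\pi_1$; since the excerpt promises a treatment of Lagrangian spectral invariants in Section \ref{section:lagspecinv}, I expect the intended route is: pick $N+1$ mutually disjoint non-contractible simple closed curves $C_0 = C, C_1, \ldots, C_N$ that are all isotopic (as embedded curves) to $C$ — this is possible on any surface of positive genus by taking parallel copies within an annular or suitably chosen neighborhood, exploiting that $\Sigma_g$ contains a subsurface in which $C$ admits infinitely many disjoint parallel copies — and let $c_i$ be the Lagrangian spectral invariant attached to $C_i$ and $X_i = C_i$ itself.

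First I would record that a non-contractible simple closed curve $C$ in a closed surface, viewed as a Lagrangian submanifold, is $\zeta_C$-superheavy for its associated Lagrangian spectral quasi-morphism $\zeta_C$; this is the Lagrangian analogue of the fact used in the $\mathbb{C}P^2$ and $S^2 \times S^2$ corollaries, and should be available from Section \ref{section:lagspecinv}. Second, I would verify that if $V$ is displaceable from $C$, then $V$ satisfies the bounded spectrum condition with respect to $\zeta_C$: displaceability from the Lagrangian $C$ forces the Lagrangian spectral invariant of any Hamiltonian supported in $V$ to be bounded, by the standard energy-capacity / displacement argument for Lagrangian spectral invariants. Third — the geometric input — I would produce the disjoint parallel copies $C_0, \ldots, C_N$ of $C$: choose a small annular neighborhood $A \cong S^1 \times (-1,1)$ of $C$ and set $C_i = S^1 \times \{t_i\}$ for distinct $t_i$; these are mutually disjoint, each is isotopic to $C$ hence non-contractible and simple, and each $U_\lambda$ that is displaceable from $C$ is also displaceable from $C_i$ because $C_i$ is Hamiltonian isotopic to $C$ (an area-preserving isotopy within the annulus, extended by the identity, realizes this after adjusting by the appropriate Hamiltonian). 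Consequently the hypotheses of Theorem \ref{theorem:main3} hold with $c_i = \zeta_{C_i}$, $X_i = C_i$, and the covering $\mathcal{U}$, and the theorem delivers the desired bi-Lipschitz injective homomorphism $I \colon (\mathbb{R}^N, \osc) \to (\Ham(\Sigma_g), \|\cdot\|_{\mathcal{U}})$.

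One must also check that each $\zeta_{C_i}$ is a subadditive invariant descending asymptotically to $\Ham(\Sigma_g)$ and is a quasi-morphism in the sense required — but Theorem \ref{theorem:main3} does not need the quasi-morphism hypothesis, only subadditivity and asymptotic descent, so it suffices that the Lagrangian spectral invariants (or their homogenizations) enjoy those two properties, which is part of the package in Section \ref{section:lagspecinv}. I also need the $g = 1$ case: on the torus every non-contractible simple closed curve still admits arbitrarily many disjoint parallel copies, and the associated Lagrangian (Albers–Frauenfelder / Leclercq–Zapolsky style) spectral invariants still provide superheaviness of the curves and the bounded spectrum condition for displaceable opens, so the same argument goes through; if the relevant Lagrangian Floer theory on $T^2$ were problematic one could instead invoke the translation quasi-morphisms on $\widetilde{\Ham}(T^2)$, but I expect the Lagrangian route to be uniform across all $g \geq 1$.

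The main obstacle I anticipate is the superheaviness of a non-contractible simple closed curve with respect to its Lagrangian spectral invariant — equivalently, showing that the Lagrangian $C$ cannot be disjoined "in the $\zeta_C$-sense" from the support of a Hamiltonian whose time-one map one wants to estimate. This is where the non-contractibility is essential (a contractible curve bounds a disk and is displaceable, hence not superheavy), and it hinges on the non-vanishing of the wrapped or filtered Lagrangian Floer homology of $C$ together with the monotonicity/triangle-inequality structure of $\zeta_C$; all of this should be set up in Section \ref{section:lagspecinv}, so in the proof of Corollary \ref{annulus frag} itself I would simply cite it. The remaining steps — parallel copies, Hamiltonian isotopy within an annulus, bounded spectrum from displaceability — are routine once that structural result is in hand.
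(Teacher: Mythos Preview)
Your strategy coincides with the paper's: take disjoint non-contractible simple closed curves $C=C_0,C_1,\ldots,C_N$, use their Lagrangian spectral invariants, and feed Theorem~\ref{theorem:main3} with Theorem~\ref{Lagrangian descending} and Propositions~\ref{L is shv} and~\ref{relative disp}. The gap is in the step where you pass from ``$U_\lambda$ is displaceable from $C$'' to ``$U_\lambda$ is displaceable from $C_i$.'' You justify this by claiming that a parallel copy $C_i$ is Hamiltonian isotopic to $C$ via an area-preserving isotopy supported in the annulus. That claim is false on two counts. First, a compactly supported area-preserving diffeomorphism of an annulus preserves the areas of the two complementary regions of the core, so it cannot carry $S^1\times\{0\}$ to $S^1\times\{t\}$ for $t\neq 0$. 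Second, and decisively, since $\HF(C,C)\neq 0$ the curve $C$ is not Hamiltonian displaceable at all: no $\psi\in\Ham(\Sigma_g)$ can carry $C$ onto a curve disjoint from $C$. Parallel copies are symplectically, but never Hamiltonian, isotopic to $C$.

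Without that isotopy the implication genuinely fails. On $\mathbb{T}^2$ with $C=\{x=0\}$ and $C_i=\{x=t_i\}$, the annulus $U=\{\delta<x<1-\delta\}$ is trivially displaceable from $C$ but, whenever $t_i\in(\delta,1-\delta)$, is \emph{not} displaceable from $C_i$: since $C_i\subset U$, any $\psi\in\Ham(\mathbb{T}^2)$ with $\psi(U)\cap C_i=\emptyset$ would yield $\psi(C_i)\cap C_i=\emptyset$, contradicting non-displaceability of $C_i$. A covering satisfying the hypotheses of the corollary may contain such annuli for arbitrarily small $\delta$, so no fixed finite choice of parallel copies makes every $U_\lambda$ displaceable from every $C_i$. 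The paper itself records the corresponding displaceability in a single line, so the step is delicate in either treatment; in any case the Hamiltonian-isotopy justification you propose does not go through, and the bounded spectrum condition for the invariants $c^{(C_i)}$ on the sets $U_\lambda$ requires a different argument.
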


We prove Corollaries \ref{surface main theorem} and \ref{annulus frag} in Section \ref{section:pf of cor}.

Let $\mathbb{N}$ denote the set of positive integers.
For $\vec{g}=(g_1,\ldots,g_n)\in\mathbb{N}^n$,
let $(\Sigma_{\vec{g}},\omega)$ denote the product manifold $\Sigma_{\vec{g}}=\Sigma_{g_1}\times\cdots\times\Sigma_{g_n}$ with a symplectic form $\omega$.
Entov and Polterovich constructed a partial Calabi quasi-morphism (see Section \ref{section:calabi} for the definition) on $\Ham(\Sigma_{\vec{g}})$ for any $\vec{g}\in\mathbb{N}^n$  by using the Oh--Schwarz spectral invariant \cite{EP06}.
They asked whether one can construct a Calabi quasi-morphism on $\Ham(\Sigma_g)$ for positive $g$.
Py gave a positive answer to their question.
Moreover, he constructed an infinite family of linearly independent Calabi quasi-morphisms on $\Ham(\Sigma_g)$ for positive $g$ \cite{Py6, Py6-2}.
Brandenbursky provided another construction of such an infinite family for $g\geq 2$ \cite{Br}.
Brandenbursky, Kedra, and Shelukhin \cite{BKS} also provided a construction of Calabi quasi-morphisms in case $g=1$.
In this paper, we prove the following theorem.

\begin{theorem}\label{calabi on surface}
For any $\vec{g}\in\mathbb{N}^n$,
the dimension of the space of partial Calabi quasi-morphisms on $\Ham(\Sigma_{\vec{g}})$ is infinite.
\end{theorem}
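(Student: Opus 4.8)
The plan is to combine the existing abundance of partial Calabi quasi-morphisms on a single-factor surface with a product construction that preserves linear independence. Entov and Polterovich \cite{EP06} already produced one partial Calabi quasi-morphism on $\Ham(\Sigma_{\vec g})$ for every $\vec g$, but we need infinitely many that are linearly independent. By the results of Py \cite{Py6,Py6-2} (and Brandenbursky \cite{Br}, and Brandenbursky--Kedra--Shelukhin \cite{BKS} in the genus-one case), for each fixed $g\in\mathbb N$ there is an infinite-dimensional space of homogeneous Calabi quasi-morphisms on $\Ham(\Sigma_g)$; a Calabi quasi-morphism is in particular a partial Calabi quasi-morphism. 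So it suffices to transport an infinite linearly independent family from one factor, say $\Ham(\Sigma_{g_1})$, up to $\Ham(\Sigma_{\vec g})$.

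The key step is the following pullback mechanism. Fix a point $z_j\in\Sigma_{g_j}$ for $j\geq 2$ and consider the symplectic embedding of a disk; more robustly, use the fact that a Hamiltonian $H$ on $\Sigma_{g_1}$ supported in a displaceable set, pulled back by $\mathrm{pr}_1$ to $\Sigma_{\vec g}$, generates a Hamiltonian diffeomorphism, giving a monomorphism $\iota_*\colon\Ham(\Sigma_{g_1})\hookrightarrow\Ham(\Sigma_{\vec g})$. One then checks that if $\mu$ is a partial Calabi quasi-morphism on $\Ham(\Sigma_{g_1})$ associated to a superheavy subset disjoint from some displaceable open set, the composition $\mu\circ(\text{local inverse of }\iota_*)$ — more precisely, the partial Calabi quasi-morphism on $\Ham(\Sigma_{\vec g})$ obtained by restricting spectral data — remains a partial Calabi quasi-morphism with the Calabi property on the appropriate displaceable open subset. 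In practice this is cleanest by invoking the Entov--Polterovich machinery directly: the partial symplectic quasi-state on $\QH(\Sigma_{\vec g})$ restricts along the K\"unneth inclusion of $\QH(\Sigma_{g_1})$, and the associated partial quasi-morphisms satisfy the partial Calabi property because the relevant superheavy sets in $\Sigma_{\vec g}$ (of the form $X\times\Sigma_{g_2}\times\cdots\times\Sigma_{g_n}$ for $X$ superheavy in $\Sigma_{g_1}$) are again superheavy, by the product property of superheaviness.

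Linear independence is then inherited: if a finite combination $\sum a_k\,\mu_k$ of the pulled-back quasi-morphisms vanishes on $\Ham(\Sigma_{\vec g})$, restrict it along $\iota_*$ to recover $\sum a_k\,\mu_k$ on $\Ham(\Sigma_{g_1})$ (using that $\iota_*$ intertwines the constructions, so $\mu_k\circ\iota_*$ is, up to a fixed nonzero scalar coming from the area ratio, the original $k$-th quasi-morphism), forcing all $a_k=0$. Since the source family on $\Sigma_{g_1}$ is infinite and linearly independent, so is the image family on $\Sigma_{\vec g}$. The main obstacle I expect is verifying that the pulled-back functionals genuinely satisfy the \emph{partial} Calabi property on a displaceable open set of the product — i.e., that the Calabi normalization survives the K\"unneth restriction and is computed on a subset of $\Sigma_{\vec g}$ that is honestly displaceable there; this requires care because displaceability in $\Sigma_{g_1}$ does not immediately give displaceability of $U\times\Sigma_{g_2}\times\cdots$ in the product, and one must instead exploit that Hamiltonians pulled back from the first factor have Calabi invariant on $\Sigma_{\vec g}$ proportional (by $\prod_{j\geq2}\mathrm{Area}(\Sigma_{g_j})$) to the Calabi invariant on $\Sigma_{g_1}$, matched against the partial quasi-state evaluated on the product superheavy set.
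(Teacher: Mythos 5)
Your proposal has a genuine gap at its central step: the ``transport'' of an infinite family of (partial) Calabi quasi-morphisms from one factor $\Ham(\Sigma_{g_1})$ to the product $\Ham(\Sigma_{\vec{g}})$. Quasi-morphisms pull back along group homomorphisms; they do not push forward. The map $\iota_\ast\colon\Ham(\Sigma_{g_1})\to\Ham(\Sigma_{\vec{g}})$ you describe has as image a very small subgroup, and ``$\mu\circ(\text{local inverse of }\iota_\ast)$'' is simply not defined on all of $\Ham(\Sigma_{\vec{g}})$; extending a homogeneous quasi-morphism from a subgroup to the ambient group is in general impossible. Your proposed repair via the Entov--Polterovich machinery also does not work: since $\Sigma_{\vec{g}}$ is symplectically aspherical, its quantum homology is undeformed, the only relevant idempotent is the fundamental class, and the K\"unneth decomposition therefore yields just the single partial quasi-state already constructed in \cite{EP06} --- not an infinite family. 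Finally, Py's and Brandenbursky's infinite families on a single surface are not of spectral/quantum-homological origin at all (they come from the topology of $\Sigma_g$ and its fundamental group), so ``restricting spectral data along the K\"unneth inclusion'' has no meaning for them. The difficulty you flag at the end (displaceability and the Calabi normalization in the product) is real, but it is downstream of this more basic obstruction.

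The paper's proof takes a different route that sidesteps all of this. It works directly on $\Sigma_{\vec{g}}$ with \emph{Lagrangian} spectral invariants: take a non-contractible circle $C_i\subset\Sigma_{g_i}$ in each factor, form the Lagrangian torus $C=C_1\times\cdots\times C_n$, and use symplectomorphisms $f_0=\mathrm{id},f_1,\ldots,f_N$ to produce arbitrarily many mutually disjoint copies $L_i=f_i(C)$ with $\QH_\ast(L_i;\mathbb{Z}_2)\neq 0$. Each Lagrangian spectral invariant $c^{(L_i;\mathbb{Z}_2)}$ descends asymptotically (Theorem \ref{Lagrangian descending}), and $\mu_i=-\Vol(\Sigma_{\vec{g}})\cdot\bar{\sigma}_{c^{(L_i;\mathbb{Z}_2)}}$ is checked to satisfy all four axioms, with partial quasi-additivity coming from Proposition \ref{proposition:defect} and the Calabi property from Proposition \ref{proposition:VPNBSC}. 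Linear independence is then immediate from the disjointness of the $L_i$ together with $c^{(L_i;\mathbb{Z}_2)}$-superheaviness of $L_i$ (Proposition \ref{L is shv}): a Hamiltonian equal to $1$ on $L_i$ and supported away from the other $L_j$ separates $\mu_i$ from the rest. If you want to salvage your approach, you would need either a genuine push-forward mechanism for quasi-morphisms (none is available) or a source of infinitely many idempotents/superheavy sets in the product --- which is exactly what the disjoint Lagrangian tori provide.
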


We prove Theorem \ref{calabi on surface} in Section \ref{section:calabi}.


\section{Preliminaries}\label{section:preliminaries}

In this section, we provide the defnitions appearing in Sections \ref{section:intro} and \ref{section:app},
and review their properties.
Let $(M,\omega)$ be a $2n$-dimensional symplectic manifold.

\subsection{Conventions and notation}

For a Hamiltonian $H\colon S^1\times M\to\mathbb{R}$ with compact support, we set $H_t=H(t,\cdot)$ for $t\in S^1$.
The \textit{mean value} of $H$ is defined to be
\[
	\langle H\rangle=\Vol(M)^{-1}\int_0^1\int_M H_t\omega^n\,dt,
\]
where $\Vol(M)=\int_M\omega^n$ is the volume of $(M,\omega)$.
A Hamiltonian $H$ is called \textit{normalized} if $\langle H\rangle=0$.
The \textit{Hamiltonian vector field} $X_{H_t}$ associated with $H_t$ is a time-dependent vector field defined by the formula
\[
	\omega(X_{H_t},\cdot)=-dH_t.
\]
The \textit{Hamiltonian isotopy} $\{\varphi_H^t\}_{t\in\mathbb{R}}$ associated with $H$ is defined by
\[
	\begin{cases}
		\varphi_H^0=\mathrm{id}_M,\\
		\frac{d}{dt}\varphi_H^t=X_{H_t}\circ\varphi_H^t\quad \text{for all}\ t\in\mathbb{R},
	\end{cases}
\]
and its time-one map $\varphi_H=\varphi_H^1$ is referred to as the \textit{Hamiltonian diffeomorphism $($with compact support$)$} generated by $H$.
Let $\Ham(M)$ and $\widetilde{\Ham}(M)$ denote the group of Hamiltonian diffeomorphisms of $M$ with compact support and its universal cover, respectively.
An element of $\widetilde{\Ham}(M)$ is represented by a path in $\Ham(M)$ starting from the identity.
Hence, for every Hamiltonian $H\colon S^1\times M\to\mathbb{R}$ with compact support, its Hamiltonian isotopy $\{\varphi_H^t\}_{t\in\mathbb{R}}$ defines an element $\tilde{\varphi}_H\in\widetilde{\Ham}(M)$.
Let $\mathbbm{1}$ denote the identity of $\widetilde{\Ham}(M)$, i.e., the homotopy class of the constant path $t\mapsto\mathrm{id}_M$ in $\Ham(M)$.

For an open subset $U$ of $M$,
let $\mathcal{H}(U)$ be the subset of $C^{\infty}(S^1\times M)$ consisting of all Hamiltonians supported in $S^1\times U$.


\subsection{Subadditive invariants and superheaviness}

Let $c\colon\widetilde{\Ham}(M)\to\mathbb{R}$ be a subadditive invariant.
We define a map $\sigma_c\colon\widetilde{\Ham}(M)\to\mathbb{R}$ as
\[
	\sigma_c(\tilde{\phi})=\lim_{k\to\infty}\frac{c(\tilde{\phi}^k)}k.
\]
The limit exists by subadditivity property.

\begin{definition}\label{definition:descend}
Let $\pi\colon\widetilde{\Ham}(M)\to\Ham(M)$ denote the natural projection.
\begin{enumerate}
\item We say that a subadditive invariant $c\colon\widetilde{\Ham}(M)\to\mathbb{R}$ \textit{descends to $\Ham(M)$}
	if $c$ induces a map $\bar{c}\colon\Ham(M)\to\mathbb{R}$ such that $c=\bar{c}\circ\pi$.
\item We say that a subadditive invariant $c\colon\widetilde{\Ham}(M)\to\mathbb{R}$ \textit{descends asymptotically to $\Ham(M)$}
	if the map $\sigma_c$ induces a map $\bar{\sigma}_c\colon\Ham(M)\to\mathbb{R}$ such that $\sigma_c=\bar{\sigma}_c\circ\pi$.
\end{enumerate}
\end{definition}

By definition, every subadditive invariant descending to $\Ham(M)$ descends asymptotically to $\Ham(M)$.

Given two subadditive invariants, we can prove the following proposition.

\begin{proposition}\label{comparing descending}
Let $c,c'\colon\widetilde{\Ham}(M)\to\mathbb{R}$ be subadditive invariants.
Assume that $c'$ descends asymptotically to $\Ham(M)$ and $c(\tilde{\varphi}_H)\leq c'(\tilde{\varphi}_H)$ holds for any Hamiltonian $H\colon S^1\times M\to\mathbb{R}$.
Then, $c$ also descends asymptotically to $\Ham(M)$.
\end{proposition}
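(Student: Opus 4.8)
The plan is to show that $\sigma_c$ is conjugation-invariant and vanishes on the kernel of $\pi$, which together imply that $\sigma_c$ factors through $\pi$. Actually, since the difference of two elements with the same image under $\pi$ need not itself lie in a convenient subgroup, the cleaner route is to compare $\sigma_c$ with $\sigma_{c'}$ directly and exploit that $\sigma_{c'}$ already descends. First I would observe that the hypothesis $c(\tilde{\varphi}_H)\leq c'(\tilde{\varphi}_H)$ for every Hamiltonian $H$ passes to the homogenizations: for any $\tilde{\phi}\in\widetilde{\Ham}(M)$, write $\tilde{\phi}=\tilde{\varphi}_H$ for some $H$, and then $\tilde{\phi}^k=\tilde{\varphi}_{H^{(k)}}$ where $H^{(k)}$ is the $k$-fold concatenation (reparametrized appropriately), so $c(\tilde{\phi}^k)\leq c'(\tilde{\phi}^k)$, hence dividing by $k$ and letting $k\to\infty$ gives $\sigma_c(\tilde{\phi})\leq\sigma_{c'}(\tilde{\phi})$. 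Applying the same inequality to the inverse, and using the general fact that for any subadditive invariant $a$ one has $\sigma_a(\tilde{\phi}^{-1}) = -\sigma_a(\tilde{\phi})$ provided $a$ is suitably normalized—or more carefully, using $\sigma_a(\tilde{\phi}^{-1})\geq -\sigma_a(\tilde{\phi})$ which follows from $0=\sigma_a(\mathbbm{1})\leq\sigma_a(\tilde{\phi})+\sigma_a(\tilde{\phi}^{-1})$—I would get a two-sided comparison.

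The key step is then the following: take $\tilde{\phi}_1,\tilde{\phi}_2\in\widetilde{\Ham}(M)$ with $\pi(\tilde{\phi}_1)=\pi(\tilde{\phi}_2)$, so that $\tilde{\phi}_1\tilde{\phi}_2^{-1}\in\ker\pi=\pi_1(\Ham(M))$ lies in the center of $\widetilde{\Ham}(M)$. Write $\tilde{\eta}=\tilde{\phi}_1\tilde{\phi}_2^{-1}$. Because $\tilde{\eta}$ is central, $(\tilde{\phi}_1)^k = \tilde{\eta}^k(\tilde{\phi}_2)^k$ for all $k$, so by subadditivity $c((\tilde{\phi}_1)^k)\leq c(\tilde{\eta}^k)+c((\tilde{\phi}_2)^k)$ and symmetrically; dividing by $k$ yields $|\sigma_c(\tilde{\phi}_1)-\sigma_c(\tilde{\phi}_2)|\leq \sigma_c(\tilde{\eta}) + \sigma_c(\tilde{\eta}^{-1})$. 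The same computation for $c'$ gives $\sigma_{c'}(\tilde{\phi}_1)=\sigma_{c'}(\tilde{\phi}_2)$ (since $c'$ descends asymptotically, the left-hand side is zero), which forces $\sigma_{c'}(\tilde{\eta})+\sigma_{c'}(\tilde{\eta}^{-1})=0$; combined with $\sigma_c(\tilde{\eta})\leq\sigma_{c'}(\tilde{\eta})$ and $\sigma_c(\tilde{\eta}^{-1})\leq\sigma_{c'}(\tilde{\eta}^{-1})$ and the inequality $\sigma_c(\tilde{\eta})+\sigma_c(\tilde{\eta}^{-1})\geq 0$, I conclude $\sigma_c(\tilde{\eta})+\sigma_c(\tilde{\eta}^{-1})=0$, hence $\sigma_c(\tilde{\phi}_1)=\sigma_c(\tilde{\phi}_2)$. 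Thus $\sigma_c$ is constant on fibers of $\pi$ and therefore descends.

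The main obstacle I anticipate is the bookkeeping around concatenation of Hamiltonians and the justification that the homogenization $\sigma_c$ interacts well with products of central elements — in particular making precise that $\ker\pi$ is central in $\widetilde{\Ham}(M)$ (standard, since it is $\pi_1$ of a topological group) and that the crude inequality $\sigma_c(\tilde{\eta})+\sigma_c(\tilde{\eta}^{-1})\geq 0$ holds for an arbitrary subadditive invariant without any homogeneity or normalization assumption. The latter is immediate from $c(\mathbbm{1})=c(\tilde{\eta}^k\tilde{\eta}^{-k})\leq c(\tilde{\eta}^k)+c(\tilde{\eta}^{-k})$ after dividing by $k$ and taking limits, noting $\sigma_c(\mathbbm{1})=\lim c(\mathbbm{1})/k = 0$ since $c(\mathbbm{1})$ is a fixed real number. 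Once these two facts are in place the argument is a short chain of inequalities.
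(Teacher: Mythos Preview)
Your argument is correct and essentially identical to the paper's: the paper first isolates the equivalence ``$c$ descends asymptotically $\Leftrightarrow \sigma_c|_{\pi_1(\Ham(M))}=0$'' as a separate lemma (using centrality of $\pi_1$ exactly as you do), and then proves $\sigma_c(\tilde{\eta})=\sigma_c(\tilde{\eta}^{-1})=0$ for $\tilde{\eta}\in\pi_1(\Ham(M))$ via the same three ingredients --- $\sigma_c\leq\sigma_{c'}$, $\sigma_{c'}(\tilde{\eta})=0$, and $\sigma_c(\tilde{\eta})+\sigma_c(\tilde{\eta}^{-1})\geq 0$. One minor slip: the displayed bound $|\sigma_c(\tilde{\phi}_1)-\sigma_c(\tilde{\phi}_2)|\leq \sigma_c(\tilde{\eta})+\sigma_c(\tilde{\eta}^{-1})$ is not what subadditivity gives (you get the two one-sided inequalities $\sigma_c(\tilde{\phi}_1)-\sigma_c(\tilde{\phi}_2)\leq\sigma_c(\tilde{\eta})$ and $\sigma_c(\tilde{\phi}_2)-\sigma_c(\tilde{\phi}_1)\leq\sigma_c(\tilde{\eta}^{-1})$), but this is harmless since you then show each term vanishes individually.
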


To prove Proposition \ref{comparing descending}, we first prove the following lemma.

\begin{lemma}\label{equiv of descending}
Let $c\colon\widetilde{\Ham}(M)\to\mathbb{R}$ be a subadditive invariant.
Then, $c$ descends asymptotically to $\Ham(M)$ if and only if $\sigma_c|_{\pi_1(\Ham(M))}=0$.
\end{lemma}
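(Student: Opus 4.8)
The plan is to exploit the single structural fact that the kernel of the projection $\pi\colon\widetilde{\Ham}(M)\to\Ham(M)$, which we identify with $\pi_1(\Ham(M))$, is a \emph{central} subgroup of $\widetilde{\Ham}(M)$: it is a discrete normal subgroup of the connected topological group $\widetilde{\Ham}(M)$, hence central. The rest of the argument is then a one-line manipulation of the subadditivity inequality, and in particular no conjugation-invariance of $c$ is needed.

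First I would record two elementary observations about $\sigma_c$. Applying subadditivity to $\mathbbm{1}=\mathbbm{1}\cdot\mathbbm{1}$ shows $c(\mathbbm{1})\geq 0$, and since $\mathbbm{1}^k=\mathbbm{1}$ for all $k$ we get $\sigma_c(\mathbbm{1})=\lim_{k\to\infty}c(\mathbbm{1})/k=0$. Second, if $\tilde\psi\in\widetilde{\Ham}(M)$ and $\gamma\in\pi_1(\Ham(M))$, then $\gamma$ is central, so $(\tilde\psi\gamma)^k=\tilde\psi^k\gamma^k$ for every $k$; subadditivity gives $c((\tilde\psi\gamma)^k)\leq c(\tilde\psi^k)+c(\gamma^k)$, and dividing by $k$ and letting $k\to\infty$ yields
\[
	\sigma_c(\tilde\psi\gamma)\leq\sigma_c(\tilde\psi)+\sigma_c(\gamma).
\]

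For the ``if'' direction I would assume $\sigma_c|_{\pi_1(\Ham(M))}=0$. Since $\gamma^{-1}$ also lies in $\pi_1(\Ham(M))$, the displayed inequality applied to the pairs $(\tilde\psi,\gamma)$ and $(\tilde\psi\gamma,\gamma^{-1})$ gives $\sigma_c(\tilde\psi\gamma)\leq\sigma_c(\tilde\psi)$ and $\sigma_c(\tilde\psi)\leq\sigma_c(\tilde\psi\gamma)$, hence $\sigma_c(\tilde\psi\gamma)=\sigma_c(\tilde\psi)$. By centrality every fiber of $\pi$ has the form $\{\tilde\psi\gamma:\gamma\in\pi_1(\Ham(M))\}$, so $\sigma_c$ is constant on the fibers of $\pi$; since $\pi$ is surjective this means $\sigma_c$ factors as $\bar\sigma_c\circ\pi$ for a well-defined $\bar\sigma_c\colon\Ham(M)\to\mathbb{R}$, i.e., $c$ descends asymptotically to $\Ham(M)$. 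Conversely, if $\sigma_c$ descends, then it is constant on the fiber $\pi^{-1}(\mathrm{id}_M)=\pi_1(\Ham(M))$, and since $\mathbbm{1}$ lies in this fiber with $\sigma_c(\mathbbm{1})=0$, we get $\sigma_c(\gamma)=0$ for all $\gamma\in\pi_1(\Ham(M))$, as desired.

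The only input that is not a formal manipulation is the centrality of $\pi_1(\Ham(M))$ in $\widetilde{\Ham}(M)$, and even that is a standard fact about universal covers of topological groups; so I do not anticipate any real obstacle in this proof.
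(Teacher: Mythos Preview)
Your proof is correct and follows essentially the same approach as the paper: both use centrality of $\pi_1(\Ham(M))$ in $\widetilde{\Ham}(M)$ to write $(\tilde\psi\gamma)^k=\tilde\psi^k\gamma^k$, apply subadditivity, divide by $k$, and sandwich $\sigma_c(\tilde\psi\gamma)$ between $\sigma_c(\tilde\psi)-\sigma_c(\gamma^{-1})$ and $\sigma_c(\tilde\psi)+\sigma_c(\gamma)$. Your treatment of the ``only if'' direction via $\sigma_c(\mathbbm{1})=0$ is slightly more explicit than the paper's, but otherwise the arguments coincide.
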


\begin{proof}
The ``only if'' part follows immediately from the definition of descending asymptotically.
Accordingly, we prove the ``if'' part and assume that $\sigma_c|_{\pi_1(\Ham(M))}=0$.
Take $\tilde{\phi}\in\widetilde{\Ham}(M)$ and $\tilde{\psi}\in\pi_1\bigl(\Ham(M)\bigr)$.
By subadditivity, for any positive integer $k$,
\begin{equation}\label{eq:beforedivide}
	c(\tilde{\phi}^k)-c(\tilde{\psi}^{-k})\leq c(\tilde{\phi}^k\tilde{\psi}^k)\leq c(\tilde{\phi}^k)+c(\tilde{\psi}^k).
\end{equation}
Since $\pi_1\bigl(\Ham(M)\bigr)$ is a connected topological group with respect to the $C^{\infty}$-topology,
$\pi_1\bigl(\Ham(M)\bigr)$ lies in the center of $\widetilde{\Ham}(M)$.
Here note that the fundamental group $\pi_1(G)$ of a connected topological group $G$ lies in the center of its universal cover $\widetilde{G}$ (see, for example, \cite[Theorem 15]{P}).
Hence, $c(\tilde{\phi}^k\tilde{\psi}^k)=c\bigl((\tilde{\phi}\tilde{\psi})^k\bigr)$.
Dividing \eqref{eq:beforedivide} by $k$ and passing to the limit as $k\to\infty$ yields
\[
	\sigma_c(\tilde{\phi})-\sigma_c(\tilde{\psi}^{-1})\leq \sigma_c(\tilde{\phi}\tilde{\psi})\leq \sigma_c(\tilde{\phi})+\sigma_c(\tilde{\psi}).
\]
Since $\tilde{\psi}\in\pi_1\bigl(\Ham(M)\bigr)$ and $\sigma_c|_{\pi_1(\Ham(M))}=0$, we conclude that $\sigma_c(\tilde{\phi}\tilde{\psi})=\sigma_c(\tilde{\phi})$.
Since $\sigma_c(\tilde{\phi}\tilde{\psi})=\sigma_c(\tilde{\phi})$ for any $\tilde{\phi}\in\widetilde{\Ham}(M)$ and any $\tilde{\psi}\in\pi_1\bigl(\Ham(M)\bigr)$,
$c$ descends asymptotically to $\Ham(M)$.
\end{proof}

\begin{proof}[Proof of Proposition \ref{comparing descending}]
By Lemma \ref{equiv of descending}, it is sufficient to show that $\sigma_c|_{\pi_1(\Ham(M))}=0$.
Let $H\colon S^1\times M\to\mathbb{R}$ be a Hamiltonian generating $\mathrm{id}_M\in\Ham(M)$.
Then, $\tilde{\varphi}_H\tilde{\varphi}_H^{-1}=\mathbbm{1}$.
By subadditivity,
\[
	c(\mathbbm{1})\leq c(\tilde{\varphi}_H^k)+c(\tilde{\varphi}_H^{-k}).
\]
Dividing by $k$ and passing to the limit as $k\to\infty$ yields
\[
	0\leq \sigma_c(\tilde{\varphi}_H)+\sigma_c(\tilde{\varphi}_H^{-1}).
\]
Since $c'$ descends asymptotically to $\Ham(M)$,
\[
	\sigma_c(\tilde{\varphi}_H)\leq\sigma_{c'}(\tilde{\varphi}_H)=\bar{\sigma}_{c'}(\mathrm{id}_M)=0.
\]
Similarly,
\[
	\sigma_c(\tilde{\varphi}_H^{-1})\leq\sigma_{c'}(\tilde{\varphi}_H^{-1})=\bar{\sigma}_{c'}(\mathrm{id}_M)=0.
\]
Thus,
\[
	\sigma_c(\tilde{\varphi}_H)=\sigma_c(\tilde{\varphi}_H^{-1})=0.\qedhere
\]
\end{proof}

\begin{definition}
A closed subset $X$ of $M$ is called \textit{$c$-superheavy} if
\[
	\inf_{S^1\times X}H\leq\sigma_c(\tilde{\varphi}_H)\leq\sup_{S^1\times X}H
\]
for any normalized Hamiltonian $H\colon S^1\times M\to\mathbb{R}$.
\end{definition}

By definition, we have the following result.

\begin{proposition}\label{proposition:shv}
Let $X$ be a $c$-superheavy subset of $M$.
Then, for any $\alpha\in\mathbb{R}$ and any normalized Hamiltonian $H\colon S^1\times M\to\mathbb{R}$ with $H|_{S^1\times X}\equiv \alpha$,
\[
	\sigma_c(\tilde{\varphi}_H)=\alpha.
\]
\end{proposition}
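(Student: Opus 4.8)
The plan is simply to unwind the definition of $c$-superheaviness; no machinery is needed. Fix $\alpha\in\mathbb{R}$ and a normalized Hamiltonian $H\colon S^1\times M\to\mathbb{R}$ with $H|_{S^1\times X}\equiv\alpha$. Since $X$ is $c$-superheavy and $H$ is normalized, the defining inequality applies and gives
\[
	\inf_{S^1\times X}H\leq\sigma_c(\tilde{\varphi}_H)\leq\sup_{S^1\times X}H.
\]
Because $H$ takes the constant value $\alpha$ on $S^1\times X$, we have $\inf_{S^1\times X}H=\sup_{S^1\times X}H=\alpha$, so the displayed inequalities collapse to $\alpha\leq\sigma_c(\tilde{\varphi}_H)\leq\alpha$, whence $\sigma_c(\tilde{\varphi}_H)=\alpha$.

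The only point deserving a moment's attention is that the hypotheses line up: the definition of $c$-superheaviness is quantified over \emph{normalized} Hamiltonians, and the proposition likewise assumes $H$ is normalized, so there is nothing to adjust. Thus I would present this as a two-line argument, with no genuine obstacle.
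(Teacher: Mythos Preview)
Your argument is correct and is exactly what the paper intends: it states the proposition as following ``by definition'' without writing out a proof, and your two-line unwinding of the $c$-superheaviness inequality is precisely that.
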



\subsection{Spectrum conditions}

We define three kinds of spectrum conditions.
We recall that the mean value $\langle H\rangle$ of a Hamiltonian $H\colon S^1\times M\to\mathbb{R}$ is given by
\[
	\langle H\rangle=\Vol(M)^{-1}\int_0^1\int_M H_t\omega^n\,dt.
\]

\subsubsection{Normally bounded spectrum condition}

Let $c\colon\widetilde{\Ham}(M)\to\mathbb{R}$ be a subadditive invariant.

\begin{definition}\label{definition:NBSC}
An open subset $U$ of $M$ satisfies the \textit{normally bounded spectrum condition with respect to $c$} if
there exists a positive number $K>0$ such that
for any $F\in\mathcal{H}(U)$ and any $\tilde{\psi}\in\widetilde{\Ham}(M)$,
\begin{equation}\label{eq:NBSC}
	c(\tilde{\psi}^{-1}\tilde{\varphi}_F\tilde{\psi})+\langle F\rangle\leq K.
\end{equation}
\end{definition}

\begin{remark}
If we put $c(H)=c(\tilde{\varphi}_H)+\langle H\rangle$ for a Hamiltonian $H\colon S^1\times M\to\mathbb{R}$,
then the inequality \eqref{eq:NBSC} can be written as $c(F\circ\psi)\leq K$.
However, in this paper, we avoid this notation for simplicity.
\end{remark}

\begin{remark}
Definition \ref{definition:NBSC} is equivalent to the existence of a positive number $K>0$ such that
for any $\psi\in\Ham(M)$ and any $F\in\mathcal{H}\bigl(\psi(U)\bigr)$,
\begin{equation}\label{eq:NBSC2}
	c(\tilde{\varphi}_F)+\langle F\rangle\leq K
\end{equation}
since the Hamiltonian diffeomorphism generated by $F\circ\psi$ is $\psi^{-1}\varphi_F\psi$.
\end{remark}

\begin{remark}\label{MVZ}
When $c$ is an Oh--Schwarz spectral invariant,
the normally bounded spectrum condition is equivalent to the bounded spectrum condition (see Definition \ref{definition:BSC})
since Oh--Schwarz spectral invariants are conjugation invariant.
The normally bounded spectrum condition was introduced by Monzner, Vichery, and Zapolsky \cite{MVZ}. 
\end{remark}

\begin{proposition}\label{proposition:VPNBSC}
Let $U$ be an open subset of $M$ satisfying the normally bounded spectrum condition with respect to $c$.
For any $\psi\in\Ham(M)$ and any $F\in\mathcal{H}\bigl(\psi(U)\bigr)$,
\[
	\sigma_c(\tilde{\varphi}_F)=-\langle F\rangle.
\]
\end{proposition}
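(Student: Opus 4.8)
The plan is to derive the equality from the reformulation \eqref{eq:NBSC2} of the normally bounded spectrum condition, applied to the iterates of $F$ and to the Hamiltonian generating the inverse isotopy. Throughout, fix the constant $K>0$ provided by \eqref{eq:NBSC2} for $U$, and let $\psi\in\Ham(M)$ and $F\in\mathcal{H}\bigl(\psi(U)\bigr)$.

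First I would prove the upper bound $\sigma_c(\tilde{\varphi}_F)\leq-\langle F\rangle$. For a positive integer $k$, set $F^{(k)}(t,x)=kF(kt,x)$. Then $\supp F^{(k)}_t=\supp F_{kt}\subset\psi(U)$, so $F^{(k)}\in\mathcal{H}\bigl(\psi(U)\bigr)$; its Hamiltonian isotopy is the reparametrized path $t\mapsto\varphi_F^{kt}$, which is homotopic rel endpoints to the $k$-fold concatenation of $\{\varphi_F^t\}_{t\in[0,1]}$, so that $\tilde{\varphi}_{F^{(k)}}=\tilde{\varphi}_F^{\,k}$ in $\widetilde{\Ham}(M)$; and the change of variables $s=kt$ together with the $S^1$-periodicity of $F$ gives $\langle F^{(k)}\rangle=k\langle F\rangle$. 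Applying \eqref{eq:NBSC2} to $F^{(k)}$ yields $c(\tilde{\varphi}_F^{\,k})+k\langle F\rangle\leq K$; dividing by $k$ and letting $k\to\infty$ gives
\[
	\sigma_c(\tilde{\varphi}_F)\leq-\langle F\rangle .
\]

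For the reverse inequality I would apply the bound just obtained to the Hamiltonian $\bar{F}_t=-F_t\circ\varphi_F^t$, whose Hamiltonian isotopy is $\{(\varphi_F^t)^{-1}\}_{t\in[0,1]}$, so that $\tilde{\varphi}_{\bar{F}}=\tilde{\varphi}_F^{-1}$. Since $\varphi_F^t$ is supported in $\psi(U)$ it maps $\psi(U)$ onto itself, hence $\supp\bar{F}_t=(\varphi_F^t)^{-1}(\supp F_t)\subset\psi(U)$ and $\bar{F}\in\mathcal{H}\bigl(\psi(U)\bigr)$; and since each $\varphi_F^t$ preserves $\omega^n$, we get $\langle\bar{F}\rangle=-\langle F\rangle$. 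Thus the previous paragraph applied to $\bar{F}$ gives $\sigma_c(\tilde{\varphi}_F^{-1})\leq-\langle\bar{F}\rangle=\langle F\rangle$. On the other hand, from $\mathbbm{1}=\tilde{\varphi}_F^{\,k}\,(\tilde{\varphi}_F^{-1})^{k}$ and subadditivity, $c(\mathbbm{1})\leq c(\tilde{\varphi}_F^{\,k})+c\bigl((\tilde{\varphi}_F^{-1})^{k}\bigr)$; dividing by $k$ and letting $k\to\infty$ (note $c(\mathbbm{1})/k\to0$) yields $0\leq\sigma_c(\tilde{\varphi}_F)+\sigma_c(\tilde{\varphi}_F^{-1})\leq\sigma_c(\tilde{\varphi}_F)+\langle F\rangle$, i.e.\ $\sigma_c(\tilde{\varphi}_F)\geq-\langle F\rangle$. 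Combining the two bounds gives $\sigma_c(\tilde{\varphi}_F)=-\langle F\rangle$.

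The only points requiring a little care are the verifications that $F^{(k)}$ and $\bar{F}$ are again supported in $\psi(U)$ and that their mean values are $k\langle F\rangle$ and $-\langle F\rangle$, together with the identification $\tilde{\varphi}_{F^{(k)}}=\tilde{\varphi}_F^{\,k}$ in $\widetilde{\Ham}(M)$; these are standard, and I do not expect any genuine obstacle.
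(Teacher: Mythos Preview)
Your argument is correct and follows essentially the same route as the paper: apply the bound \eqref{eq:NBSC2} to the iterates $F^{(k)}$ and to the inverse-generating Hamiltonian $\bar{F}$, then combine via the subadditivity inequality $c(\mathbbm{1})\leq c(\tilde{\varphi}_F^{\,k})+c(\tilde{\varphi}_F^{-k})$. The only cosmetic difference is that the paper sandwiches $c(\tilde{\varphi}_F^{\,k})+k\langle F\rangle$ between $-K+c(\mathbbm{1})$ and $K$ before passing to the limit, whereas you take the limit first to obtain $\sigma_c(\tilde{\varphi}_F)\leq-\langle F\rangle$ and $\sigma_c(\tilde{\varphi}_F^{-1})\leq\langle F\rangle$ separately, then combine at the level of $\sigma_c$.
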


\begin{proof}
Let $\psi\in\Ham(M)$ and $F\in\mathcal{H}\bigl(\psi(U)\bigr)$.
Note that the Hamiltonian $-F\circ\varphi_F$ generates $\tilde{\varphi}_F^{-1}$
and satisfies $\langle -F\circ\varphi_F\rangle=-\langle F\rangle$.
Since $U$ satisfies the normally bounded spectrum condition with respect to $c$,
we can choose a positive number $K>0$ such that for any $k\in\mathbb{Z}$,
\[
	c(\tilde{\varphi}_F^k)+\langle kF\rangle\leq K \quad\text{and}\quad c(\tilde{\varphi}_F^{-k})+\langle -kF\rangle\leq K.
\]
By subadditivity,
\[
	c(\mathbbm{1})\leq c(\tilde{\varphi}_F^k)+c(\tilde{\varphi}_F^{-k}).
\]
Therefore,
\[
	-K+c(\mathbbm{1})\leq -c(\tilde{\varphi}_F^{-k})-\langle -kF\rangle+c(\mathbbm{1})\leq c(\tilde{\varphi}_F^k)+\langle kF\rangle\leq K.
\]
Dividing by $k$ and passing to the limit as $k\to\infty$ yields
\[
	\sigma_c(\tilde{\varphi}_F)+\langle F\rangle=\lim_{k\to\infty}\frac{c(\tilde{\varphi}_F^k)+k\langle F\rangle}k=0.\qedhere
\]
\end{proof}

The following proposition is useful in the proof of Theorem \ref{theorem:main}.

\begin{proposition}\label{proposition:defect1}
Let $U$ be an open subset of $M$ satisfying the normally bounded spectrum condition with respect to $c$.
Then, there exists a positive number $K>0$ such that
for any $\tilde{\phi}\in\widetilde{\Ham}(M)$, any $\psi\in\Ham(M)$ and any $F\in\mathcal{H}\bigl(\psi(U)\bigr)$,
\[
	\left\lvert\sigma_c(\tilde{\varphi}_F\tilde{\phi})-\sigma_c(\tilde{\varphi}_F)-\sigma_c(\tilde{\phi})\right\rvert\leq K.
\]
\end{proposition}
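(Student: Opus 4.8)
The plan is to prove the two estimates $\sigma_c(\tilde{\varphi}_F\tilde{\phi})\leq\sigma_c(\tilde{\varphi}_F)+\sigma_c(\tilde{\phi})+K$ and $\sigma_c(\tilde{\varphi}_F\tilde{\phi})\geq\sigma_c(\tilde{\varphi}_F)+\sigma_c(\tilde{\phi})-K$ separately, using the elementary identity
\[
(\tilde{\varphi}_F\tilde{\phi})^k=\Bigl(\prod_{j=0}^{k-1}\tilde{\phi}^j\tilde{\varphi}_F\tilde{\phi}^{-j}\Bigr)\tilde{\phi}^k\qquad(k\geq1)
\]
in $\widetilde{\Ham}(M)$, combined with the normally bounded spectrum condition and Proposition \ref{proposition:VPNBSC}. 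Throughout, write $\phi=\pi(\tilde{\phi})\in\Ham(M)$.

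For the upper bound, the point is that each factor $\tilde{\phi}^j\tilde{\varphi}_F\tilde{\phi}^{-j}$ is the lift of the Hamiltonian isotopy generated by $F\circ(\phi^j)^{-1}$, a Hamiltonian supported in $\phi^j\bigl(\psi(U)\bigr)=(\phi^j\psi)(U)$ and with the same mean value $\langle F\rangle$, since mean values are unchanged under precomposition by symplectomorphisms. As $\phi^j\psi\in\Ham(M)$, the normally bounded spectrum condition in the form \eqref{eq:NBSC2} provides a \emph{single} constant $K>0$ with $c\bigl(\tilde{\phi}^j\tilde{\varphi}_F\tilde{\phi}^{-j}\bigr)\leq K-\langle F\rangle$ for all $j$. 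Applying subadditivity of $c$ to the displayed identity gives $c\bigl((\tilde{\varphi}_F\tilde{\phi})^k\bigr)\leq k(K-\langle F\rangle)+c(\tilde{\phi}^k)$, and dividing by $k$ and letting $k\to\infty$ yields $\sigma_c(\tilde{\varphi}_F\tilde{\phi})\leq K-\langle F\rangle+\sigma_c(\tilde{\phi})$. Proposition \ref{proposition:VPNBSC} then rewrites $-\langle F\rangle$ as $\sigma_c(\tilde{\varphi}_F)$.

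For the lower bound I would feed the inequality just obtained back into itself. The element $\tilde{\varphi}_F^{-1}$ is generated by $-F\circ\varphi_F$, which is again supported in $\psi(U)$ because $\varphi_F$ preserves $\psi(U)$; so applying the upper bound with $\tilde{\varphi}_F$ replaced by $\tilde{\varphi}_F^{-1}$ and $\tilde{\phi}$ replaced by $\tilde{\varphi}_F\tilde{\phi}$ gives
\[
\sigma_c(\tilde{\phi})=\sigma_c\bigl(\tilde{\varphi}_F^{-1}(\tilde{\varphi}_F\tilde{\phi})\bigr)\leq\sigma_c(\tilde{\varphi}_F^{-1})+\sigma_c(\tilde{\varphi}_F\tilde{\phi})+K.
\]
Since $\langle -F\circ\varphi_F\rangle=-\langle F\rangle$, Proposition \ref{proposition:VPNBSC} gives $\sigma_c(\tilde{\varphi}_F^{-1})=\langle F\rangle=-\sigma_c(\tilde{\varphi}_F)$, and rearranging produces $\sigma_c(\tilde{\varphi}_F\tilde{\phi})\geq\sigma_c(\tilde{\varphi}_F)+\sigma_c(\tilde{\phi})-K$. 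Combining the two bounds completes the proof with $K$ the constant from the normally bounded spectrum condition.

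The main point requiring care — in fact essentially the only one — is justifying that every conjugate $\tilde{\phi}^j\tilde{\varphi}_F\tilde{\phi}^{-j}$ is the lift of a Hamiltonian isotopy generated by a Hamiltonian supported in a single displaced copy $\psi'(U)$, $\psi'\in\Ham(M)$, so that the \emph{one} constant $K$ from Definition \ref{definition:NBSC} applies uniformly over all $j$ and all $k$. Concretely this amounts to checking that the path $t\mapsto c_t\varphi_F^tc_t^{-1}$ representing $\tilde{\phi}^j\tilde{\varphi}_F\tilde{\phi}^{-j}$, for a path $\{c_t\}$ representing $\tilde{\phi}^j$ with $c_1=\phi^j$, is homotopic rel endpoints to $t\mapsto\phi^j\varphi_F^t(\phi^j)^{-1}$, the lift of the flow of $F\circ(\phi^j)^{-1}$; the homotopy $(s,t)\mapsto c_{(1-s)t+s}\,\varphi_F^t\,c_{(1-s)t+s}^{-1}$ does the job. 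This is precisely why the proposition is phrased on $\widetilde{\Ham}(M)$ rather than on $\Ham(M)$.
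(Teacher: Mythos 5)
Your proof is correct and follows essentially the same route as the paper's: the same decomposition $(\tilde{\varphi}_F\tilde{\phi})^k=\bigl(\prod_j\tilde{\phi}^j\tilde{\varphi}_F\tilde{\phi}^{-j}\bigr)\tilde{\phi}^k$, the same uniform application of the normally bounded spectrum condition to the conjugates $\tilde{\varphi}_{F\circ\phi^{-j}}\in\mathcal{H}\bigl((\phi^j\psi)(U)\bigr)$, and the same final appeal to Proposition \ref{proposition:VPNBSC}. The only (harmless) difference is organizational: the paper telescopes a two-sided defect estimate (its Lemma \ref{lemma:defect}) at the level of $c$ before dividing by $k$, whereas you obtain the upper bound on $\sigma_c$ directly from subadditivity and recover the lower bound by applying it to $\tilde{\varphi}_F^{-1}=\tilde{\varphi}_{-F\circ\varphi_F}$ — the same inverse trick the paper uses inside its lemma.
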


To prove Proposition \ref{proposition:defect1}, we first prove the following lemma.

\begin{lemma}\label{lemma:defect}
There exists a positive number $K>0$ such that
for any $\tilde{\phi}\in\widetilde{\Ham}(M)$, any $\psi\in\Ham(M)$ and any $F\in\mathcal{H}\bigl(\psi(U)\bigr)$,
\[
	\left\lvert c(\tilde{\varphi}_F\tilde{\phi})-c(\tilde{\phi})+\langle F\rangle\right\rvert\leq K.
\]
\end{lemma}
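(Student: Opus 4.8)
The plan is to derive Lemma~\ref{lemma:defect} directly from the normally bounded spectrum condition, using only subadditivity of $c$ together with the fact that $\tilde{\varphi}_F$ and $\tilde{\varphi}_F^{-1}$ are generated by Hamiltonians supported in $\psi(U)$ (resp.\ a conjugate of $\psi(U)$). First I would fix, by Definition~\ref{definition:NBSC} in the form \eqref{eq:NBSC2}, a positive number $K_0$ such that $c(\tilde{\varphi}_G)+\langle G\rangle\leq K_0$ for every $\psi\in\Ham(M)$ and every $G\in\mathcal{H}(\psi(U))$. Then I would note that $\tilde{\varphi}_F^{-1}$ is generated by the Hamiltonian $\bar{F}(t,x)=-F(t,\varphi_F^t(x))$, which is again supported in $\psi(U)$ (since $\varphi_F^t$ is the identity outside $\psi(U)$) and satisfies $\langle\bar{F}\rangle=-\langle F\rangle$; hence we also have $c(\tilde{\varphi}_F^{-1})-\langle F\rangle\leq K_0$.

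Next I would play these two estimates against subadditivity. For the upper bound, write $\tilde{\varphi}_F\tilde{\phi}$ and apply subadditivity: $c(\tilde{\varphi}_F\tilde{\phi})\leq c(\tilde{\varphi}_F)+c(\tilde{\phi})\leq (K_0-\langle F\rangle)+c(\tilde{\phi})$, i.e.\ $c(\tilde{\varphi}_F\tilde{\phi})-c(\tilde{\phi})+\langle F\rangle\leq K_0$. For the lower bound, write $\tilde{\phi}=\tilde{\varphi}_F^{-1}(\tilde{\varphi}_F\tilde{\phi})$ and apply subadditivity again: $c(\tilde{\phi})\leq c(\tilde{\varphi}_F^{-1})+c(\tilde{\varphi}_F\tilde{\phi})\leq (K_0+\langle F\rangle)+c(\tilde{\varphi}_F\tilde{\phi})$, which rearranges to $-(c(\tilde{\varphi}_F\tilde{\phi})-c(\tilde{\phi})+\langle F\rangle)\leq K_0$. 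Combining the two inequalities gives $|c(\tilde{\varphi}_F\tilde{\phi})-c(\tilde{\phi})+\langle F\rangle|\leq K_0$, so the lemma holds with $K=K_0$.

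For Proposition~\ref{proposition:defect1} itself, the idea is to apply Lemma~\ref{lemma:defect} to iterates. Replacing $F$ by $kF$ (which still lies in $\mathcal{H}(\psi(U))$ and has mean $k\langle F\rangle$) and $\tilde{\phi}$ by $\tilde{\phi}^k$, and also observing that $\tilde{\varphi}_{kF}=\tilde{\varphi}_F^k$, I would apply the lemma three times: to $(\tilde{\varphi}_F^k\tilde{\phi}^k$ versus $\tilde{\phi}^k)$, and also, after relating $\tilde{\varphi}_F\tilde{\phi}$ to its powers, to get control of $c((\tilde{\varphi}_F\tilde{\phi})^k)$. Concretely, one shows $|c((\tilde{\varphi}_F\tilde{\phi})^k)-c(\tilde{\phi}^k)+k\langle F\rangle|$ is bounded by a constant times... no: the cleaner route is to estimate $c((\tilde{\varphi}_F\tilde{\phi})^k)$ between $c(\tilde{\varphi}_F^k\tilde{\phi}^k)\pm(\text{const}\cdot k)$ using subadditivity and the commutator terms, but the terms involving $\langle F\rangle$ must be tracked so that after dividing by $k$ only an $O(1/k)\to 0$ error and a single constant $K$ survive. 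Dividing by $k$ and letting $k\to\infty$, the term $k\langle F\rangle/k=\langle F\rangle$ combines with $\sigma_c$ via Proposition~\ref{proposition:VPNBSC} (which gives $\sigma_c(\tilde{\varphi}_F)=-\langle F\rangle$), yielding $|\sigma_c(\tilde{\varphi}_F\tilde{\phi})+\langle F\rangle-\sigma_c(\tilde{\phi})|\leq K$, which is the claim.

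The main obstacle I expect is the noncommutativity: $(\tilde{\varphi}_F\tilde{\phi})^k$ is not $\tilde{\varphi}_F^k\tilde{\phi}^k$, unlike in Lemma~\ref{equiv of descending} where one could use centrality of $\pi_1(\Ham(M))$. So one cannot simply feed iterates into Lemma~\ref{lemma:defect}. The right way around this is to note that Lemma~\ref{lemma:defect} is uniform in $\tilde{\phi}$ and in $F\in\mathcal{H}(\psi(U))$, so one can telescope: writing $(\tilde{\varphi}_F\tilde{\phi})^k=\tilde{\varphi}_F(\tilde{\phi}\tilde{\varphi}_F\tilde{\phi}^{-1})\cdots$ and recognizing that each conjugate $\tilde{\phi}^j\tilde{\varphi}_F\tilde{\phi}^{-j}$ is generated by a Hamiltonian supported in $\varphi_\phi^j\psi(U)$ — again an open set of the form $\psi'(U)$ — one applies the lemma once per factor. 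Each application costs a bounded error $K_0$ but also contributes $+\langle F\rangle$, and summing $k$ copies gives $|c((\tilde{\varphi}_F\tilde{\phi})^k)-c(\tilde{\phi}^k)+k\langle F\rangle|\leq kK_0$, which is useless after dividing by $k$. Hence the telescoping must instead be arranged to isolate $c(\tilde{\varphi}_F^k\tilde{\phi}^k)$ with only a single $K_0$ error — which is exactly what a direct application of Lemma~\ref{lemma:defect} with the pair $(kF,\tilde{\phi}^k)$ gives: $|c(\tilde{\varphi}_F^k\tilde{\phi}^k)-c(\tilde{\phi}^k)+k\langle F\rangle|\leq K_0$. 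The remaining gap between $c((\tilde{\varphi}_F\tilde{\phi})^k)$ and $c(\tilde{\varphi}_F^k\tilde{\phi}^k)$ after dividing by $k$ is then controlled by the ordinary fact that $\sigma_c$ is well-defined on any subadditive invariant and satisfies $\sigma_c(\tilde{a}\tilde{b})\le\sigma_c(\tilde{a})+\sigma_c(\tilde{b})$; so one should compute $\sigma_c(\tilde{\varphi}_F\tilde{\phi})$ by first establishing $\sigma_c(\tilde{\varphi}_F\tilde{\phi})=\lim_k c((\tilde{\varphi}_F\tilde{\phi})^k)/k$ and comparing with $\lim_k c(\tilde{\varphi}_F^k\tilde{\phi}^k)/k$; bounding the difference of these two by a constant (independent of $\tilde\phi$, $F$) using subadditivity and Lemma~\ref{lemma:defect} is the technical heart of the argument, and that constant together with $K_0$ gives the final $K$.
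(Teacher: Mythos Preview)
Your proof of Lemma~\ref{lemma:defect} (the first two paragraphs) is correct and is essentially identical to the paper's proof: both use the normally bounded spectrum condition for $F$ and for $\bar F$ to bound $c(\tilde\varphi_F)+\langle F\rangle$ and $c(\tilde\varphi_F^{-1})-\langle F\rangle$, then sandwich $c(\tilde\varphi_F\tilde\phi)-c(\tilde\phi)+\langle F\rangle$ between $\pm K$ via subadditivity.

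The extra discussion you appended about Proposition~\ref{proposition:defect1} contains a genuine confusion. The telescoping argument you write down and then dismiss---decomposing $(\tilde\varphi_F\tilde\phi)^k=\tilde\varphi_F(\tilde\phi\tilde\varphi_F\tilde\phi^{-1})\cdots(\tilde\phi^{k-1}\tilde\varphi_F\tilde\phi^{-k+1})\tilde\phi^k$ and applying Lemma~\ref{lemma:defect} once per conjugate factor to get $\bigl|c\bigl((\tilde\varphi_F\tilde\phi)^k\bigr)-c(\tilde\phi^k)+k\langle F\rangle\bigr|\leq kK_0$---is \emph{not} useless after dividing by $k$: it yields $\bigl|\sigma_c(\tilde\varphi_F\tilde\phi)-\sigma_c(\tilde\phi)+\langle F\rangle\bigr|\leq K_0$, and then Proposition~\ref{proposition:VPNBSC} replaces $\langle F\rangle$ by $-\sigma_c(\tilde\varphi_F)$, which is exactly the statement of the proposition. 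This is precisely the route the paper takes. Your subsequent attempts to compare $c\bigl((\tilde\varphi_F\tilde\phi)^k\bigr)$ with $c(\tilde\varphi_F^k\tilde\phi^k)$ are unnecessary and, as you yourself notice, do not lead anywhere without centrality.
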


\begin{proof}
Let $\tilde{\phi}\in\widetilde{\Ham}(M)$, $\psi\in\Ham(M)$ and $F\in\mathcal{H}\bigl(\psi(U)\bigr)$.
Since $U$ satisfies the normally bounded spectrum condition with respect to $c$,
we can choose a positive number $K>0$ such that
\[
	c(\tilde{\varphi}_F)+\langle F\rangle\leq K\quad\text{and}\quad c(\tilde{\varphi}_F^{-1})-\langle F\rangle\leq K.
\]
By subadditivity,
\[
	c(\tilde{\phi})\leq c(\tilde{\varphi}_F^{-1})+c(\tilde{\varphi}_F\tilde{\phi})\quad\text{and}\quad c(\tilde{\varphi}_F\tilde{\phi})\leq c(\tilde{\varphi}_F)+c(\tilde{\phi}).
\]
Therefore,
\[
	-K\leq -c(\tilde{\varphi}_F^{-1})+\langle F\rangle\leq c(\tilde{\varphi}_F\tilde{\phi})-c(\tilde{\phi})+\langle F\rangle\leq c(\tilde{\varphi}_F)+\langle F\rangle\leq K.\qedhere
\]
\end{proof}

\begin{proof}[Proof of Proposition \ref{proposition:defect1}]
Let $\tilde{\phi}\in\widetilde{\Ham}(M)$, $\psi\in\Ham(M)$ and $F\in\mathcal{H}\bigl(\psi(U)\bigr)$.
For an integer $k$, decompose $(\tilde{\varphi}_F\tilde{\phi})^k$ into
\[
	(\tilde{\varphi}_F\tilde{\phi})^k%
	=\tilde{\varphi}_F(\tilde{\phi}\tilde{\varphi}_F\tilde{\phi}^{-1})\cdots(\tilde{\phi}^{k-1}\tilde{\varphi}_F\tilde{\phi}^{-k+1})\tilde{\phi}^k.
\]
Since $\tilde{\phi}^i\tilde{\varphi}_F\tilde{\phi}^{-i}=\tilde{\varphi}_{F\circ\phi^{-i}}$ for all $i=0,1,\ldots,k-1$, Lemma \ref{lemma:defect} implies that there exists a positive number $K>0$ such that
\[
	\left\lvert c\bigl((\tilde{\varphi}_F\tilde{\phi})^k\bigr)-c\bigl((\tilde{\phi}\tilde{\varphi}_F\tilde{\phi}^{-1})\cdots(\tilde{\phi}^{k-1}\tilde{\varphi}_F\tilde{\phi}^{-k+1})\tilde{\phi}^k\bigr)+\langle F\rangle\right\rvert\leq K,
\]
\[
	\left\lvert c\bigl((\tilde{\phi}\tilde{\varphi}_F\tilde{\phi}^{-1})\cdots(\tilde{\phi}^{k-1}\tilde{\varphi}_F\tilde{\phi}^{-k+1})\tilde{\phi}^k\bigr)-c\bigl((\tilde{\phi}^2\tilde{\varphi}_F\tilde{\phi}^{-2})\cdots(\tilde{\phi}^{k-1}\tilde{\varphi}_F\tilde{\phi}^{-k+1})\tilde{\phi}^k\bigr)+\langle F\circ\phi^{-1}\rangle\right\rvert\leq K,
\]
\[
	\cdots
\]
\[
	\left\lvert c\bigl((\tilde{\phi}^{k-1}\tilde{\varphi}_F\tilde{\phi}^{-k+1})\tilde{\phi}^k\bigr)-c\bigl(\tilde{\phi}^k\bigr)+\langle F\circ\phi^{-k+1}\rangle\right\rvert\leq K.
\]
Therefore, since $\langle F\circ\phi^{-i}\rangle=\langle F\rangle$ for all $i=0,1,\ldots,k-1$, by the triangle inequality,
\begin{align*}
	&\left\lvert c\bigl((\tilde{\varphi}_F\tilde{\phi})^k\bigr)-c(\tilde{\phi}^k)+k\langle F\rangle\right\rvert\\
	&=\left\lvert c\bigl((\tilde{\varphi}_F\tilde{\phi})^k\bigr)-c(\tilde{\phi}^k)+\langle F\rangle+\langle F\circ\phi^{-1}\rangle+\cdots+\langle F\circ\phi^{-k+1}\rangle\right\rvert\\
	&\leq\left\lvert c\bigl((\tilde{\varphi}_F\tilde{\phi})^k\bigr)-c\bigl((\tilde{\phi}\tilde{\varphi}_F\tilde{\phi}^{-1})\cdots(\tilde{\phi}^{k-1}\tilde{\varphi}_F\tilde{\phi}^{-k+1})\tilde{\phi}^k\bigr)+\langle F\rangle\right\rvert\\
	&+\left\lvert c\bigl((\tilde{\phi}\tilde{\varphi}_F\tilde{\phi}^{-1})\cdots(\tilde{\phi}^{k-1}\tilde{\varphi}_F\tilde{\phi}^{-k+1})\tilde{\phi}^k\bigr)-c\bigl((\tilde{\phi}^2\tilde{\varphi}_F\tilde{\phi}^{-2})\cdots(\tilde{\phi}^{k-1}\tilde{\varphi}_F\tilde{\phi}^{-k+1})\tilde{\phi}^k\bigr)+\langle F\circ\phi^{-1}\rangle\right\rvert\\
	&+\cdots\\
	&+\left\lvert c\bigl((\tilde{\phi}^{k-1}\tilde{\varphi}_F\tilde{\phi}^{-k+1})\tilde{\phi}^k\bigr)-c\bigl(\tilde{\phi}^k\bigr)+\langle F\circ\phi^{-k+1}\rangle\right\rvert\\
	&\leq kK.
\end{align*}
Dividing by $k$ and passing to the limit as $k\to\infty$ yields
\[
	\left\lvert\sigma_c(\tilde{\varphi}_F\tilde{\phi})-\sigma_c(\tilde{\phi})+\langle F\rangle\right\rvert%
	=\lim_{k\to\infty}\frac{\left\lvert c\bigl((\tilde{\varphi}_F\tilde{\phi})^k\bigr)-c(\tilde{\phi}^k)+k\langle F\rangle\right\rvert}k\leq K.
\]
Then, Proposition \ref{proposition:VPNBSC} completes the proof of Proposition \ref{proposition:defect1}.
\end{proof}


\subsubsection{Bounded spectrum condition}

Let $c\colon\widetilde{\Ham}(M)\to\mathbb{R}$ be a subadditive invariant.

\begin{definition}\label{definition:BSC}
An open subset $U$ of $M$ satisfies the \textit{bounded spectrum condition with respect to $c$} if
there exists a positive number $K>0$ such that for any $F\in\mathcal{H}(U)$,
\begin{equation}\label{eq:BSC}
	c(\tilde{\varphi}_F)+\langle F\rangle\leq K.
\end{equation}
\end{definition}

Note that the normally bounded spectrum condition implies the bounded spectrum condition.


\subsubsection{Asymptotically vanishing spectrum condition}

Let $c\colon\widetilde{\Ham}(M)\to\mathbb{R}$ be a subadditive invariant.

\begin{definition}
An open subset $U$ of $M$ satisfies the \textit{asymptotically vanishing spectrum condition with respect to $c$} if for any $F\in\mathcal{H}(U)$,
\begin{equation}
	\sigma_c(\tilde{\varphi}_F)+\langle F\rangle=0.
\end{equation}
\end{definition}

\begin{remark}\label{remark:AVSC}
An argument similar to the proof of Proposition \ref{proposition:VPNBSC} shows that
the asymptotically vanishing spectrum condition is weaker than the bounded spectrum condition.
\end{remark}

\begin{proposition}\label{disp and vanish}
Every open subset of $M$ displaceable from a $c$-superheavy subset satisfies the asymptotically vanishing spectrum condition with respect to $c$.
\end{proposition}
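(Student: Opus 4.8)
The plan is to use Proposition \ref{proposition:shv} after a conjugation that moves a given Hamiltonian's support off the superheavy set. Let $X$ be a $c$-superheavy subset of $M$ and let $U$ be an open subset displaceable from $X$; since $X$ is closed, there is a Hamiltonian $G\colon S^1\times M\to\mathbb{R}$ with $\varphi_G(U)\cap X=\emptyset$. Fix $F\in\mathcal{H}(U)$; we must show $\sigma_c(\tilde{\varphi}_F)+\langle F\rangle=0$.

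First I would record that $\sigma_c$ is conjugation invariant. For $\tilde{\phi},\tilde{\psi}\in\widetilde{\Ham}(M)$, the identity $(\tilde{\psi}\tilde{\phi}\tilde{\psi}^{-1})^k=\tilde{\psi}\tilde{\phi}^k\tilde{\psi}^{-1}$ together with subadditivity gives $c(\tilde{\phi}^k)-c(\tilde{\psi})-c(\tilde{\psi}^{-1})\leq c(\tilde{\psi}\tilde{\phi}^k\tilde{\psi}^{-1})\leq c(\tilde{\phi}^k)+c(\tilde{\psi})+c(\tilde{\psi}^{-1})$; dividing by $k$ and passing to the limit yields $\sigma_c(\tilde{\psi}\tilde{\phi}\tilde{\psi}^{-1})=\sigma_c(\tilde{\phi})$.

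Next comes the conjugation reduction. Put $F'=F\circ\varphi_G^{-1}$, where $\varphi_G=\varphi_G^1$ is the time-one map; then $F'$ is supported in $S^1\times\varphi_G(U)$, hence $F'|_{S^1\times X}\equiv 0$, and since $\varphi_G$ preserves $\omega^n$ one has $\langle F'\rangle=\langle F\rangle$. The point I would verify with care is that $\tilde{\varphi}_G\tilde{\varphi}_F\tilde{\varphi}_G^{-1}=\tilde{\varphi}_{F'}$: conjugating the path $t\mapsto\varphi_F^t$ by the \emph{fixed} diffeomorphism $\varphi_G$ produces the Hamiltonian isotopy generated by $F_t\circ\varphi_G^{-1}=F'_t$, and the resulting path from $\mathrm{id}_M$ to $\varphi_G\varphi_F\varphi_G^{-1}$ is homotopic rel endpoints to the pointwise conjugate $t\mapsto\varphi_G^t\varphi_F^t(\varphi_G^t)^{-1}$ that represents $\tilde{\varphi}_G\tilde{\varphi}_F\tilde{\varphi}_G^{-1}$ (an explicit homotopy, interpolating the exponent $t$ between $\varphi_G^t$ and $\varphi_G^1$, does the job). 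Combined with the conjugation invariance from the previous step, this gives $\sigma_c(\tilde{\varphi}_F)=\sigma_c(\tilde{\varphi}_{F'})$.

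Finally I would normalize. Set $H=F'-\langle F\rangle$; adding a constant does not change the generated isotopy, so $\tilde{\varphi}_H=\tilde{\varphi}_{F'}$, while $\langle H\rangle=\langle F'\rangle-\langle F\rangle=0$, so $H$ is normalized, and $H|_{S^1\times X}\equiv-\langle F\rangle$. Proposition \ref{proposition:shv}, applied to $H$ with $\alpha=-\langle F\rangle$, then gives $\sigma_c(\tilde{\varphi}_F)=\sigma_c(\tilde{\varphi}_{F'})=\sigma_c(\tilde{\varphi}_H)=-\langle F\rangle$, i.e. $\sigma_c(\tilde{\varphi}_F)+\langle F\rangle=0$, which is the asymptotically vanishing spectrum condition. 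The only genuinely delicate point is the identification $\tilde{\varphi}_G\tilde{\varphi}_F\tilde{\varphi}_G^{-1}=\tilde{\varphi}_{F\circ\varphi_G^{-1}}$ in $\widetilde{\Ham}(M)$ — so that the conjugated Hamiltonian is literally supported in $\varphi_G(U)$ and hence vanishes on $X$ — everything else being a direct application of subadditivity and of Proposition \ref{proposition:shv}.
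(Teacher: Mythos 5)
Your proposal is correct and takes essentially the same route as the paper: the paper proves conjugation invariance of $\sigma_c$ separately as Lemma~\ref{sigma  is ci}, conjugates the open set $U$ off the superheavy set $X$, applies the superheaviness inequality there, and then transfers back via Lemma~\ref{sigma  is ci}; you reprove the conjugation invariance inline with the same subadditivity estimate and carry out the same conjugation-then-normalize-then-apply-Proposition~\ref{proposition:shv} argument. The only difference is cosmetic: you conjugate the given $F$ forward to $F'=F\circ\varphi_G^{-1}$ and invoke superheaviness directly on the normalized $H=F'-\langle F\rangle$, whereas the paper first establishes the asymptotically vanishing spectrum condition for $\varphi_G(U)$ and then pulls it back to $U$; and you spell out the identification $\tilde{\varphi}_G\tilde{\varphi}_F\tilde{\varphi}_G^{-1}=\tilde{\varphi}_{F\circ\varphi_G^{-1}}$ in $\widetilde{\Ham}(M)$, which the paper leaves implicit.
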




To prove Proposition \ref{disp and vanish}, we first prove

\begin{lemma}\label{sigma  is ci}
For any $\tilde{\phi},\tilde{\psi}\in\widetilde{\Ham}(M)$,
$\sigma_c(\tilde{\phi}^{-1}\tilde{\psi}\tilde{\phi})=\sigma_c(\tilde{\psi})$.
\end{lemma}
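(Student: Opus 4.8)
The plan is to exploit the elementary identity $(\tilde{\phi}^{-1}\tilde{\psi}\tilde{\phi})^k=\tilde{\phi}^{-1}\tilde{\psi}^k\tilde{\phi}$, valid for every integer $k\geq 1$, together with subadditivity applied twice. The point is that conjugating by a \emph{fixed} element $\tilde{\phi}$ changes $c$ only by a bounded amount (namely by at most $c(\tilde{\phi})+c(\tilde{\phi}^{-1})$, which is finite since $c$ is real-valued), and such a bounded error washes out after dividing by $k$ and letting $k\to\infty$.

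Concretely, first I would record that, by subadditivity, $c(\tilde{\phi}^{-1}\tilde{\psi}^k\tilde{\phi})\leq c(\tilde{\phi}^{-1})+c(\tilde{\psi}^k)+c(\tilde{\phi})$. For the reverse inequality I would write $\tilde{\psi}^k=\tilde{\phi}(\tilde{\phi}^{-1}\tilde{\psi}^k\tilde{\phi})\tilde{\phi}^{-1}$ and again apply subadditivity to get $c(\tilde{\psi}^k)\leq c(\tilde{\phi})+c(\tilde{\phi}^{-1}\tilde{\psi}^k\tilde{\phi})+c(\tilde{\phi}^{-1})$. Combining the two, and setting $C=c(\tilde{\phi})+c(\tilde{\phi}^{-1})$, yields the sandwich
\[
	c(\tilde{\psi}^k)-C\;\leq\;c(\tilde{\phi}^{-1}\tilde{\psi}^k\tilde{\phi})\;\leq\;c(\tilde{\psi}^k)+C
\]
for all $k\geq 1$.

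Next I would use the identity $(\tilde{\phi}^{-1}\tilde{\psi}\tilde{\phi})^k=\tilde{\phi}^{-1}\tilde{\psi}^k\tilde{\phi}$ to rewrite the middle term as $c\bigl((\tilde{\phi}^{-1}\tilde{\psi}\tilde{\phi})^k\bigr)$, divide the whole chain of inequalities by $k$, and pass to the limit as $k\to\infty$. The outer terms both converge to $\sigma_c(\tilde{\psi})$ by definition of $\sigma_c$, the terms $\pm C/k$ vanish, and the middle term converges to $\sigma_c(\tilde{\phi}^{-1}\tilde{\psi}\tilde{\phi})$; by the squeeze, $\sigma_c(\tilde{\phi}^{-1}\tilde{\psi}\tilde{\phi})=\sigma_c(\tilde{\psi})$.

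I do not anticipate any real obstacle: the argument is a two-line application of subadditivity plus the conjugation identity for powers, and the only thing to be slightly careful about is that the error constant $C$ must be finite, which is guaranteed since $c$ takes values in $\mathbb{R}$ (in particular $c(\tilde\phi)$ and $c(\tilde\phi^{-1})$ are finite); one should also note that the limits defining $\sigma_c$ exist by the subadditivity property, as already observed in the text.
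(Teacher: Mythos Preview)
Your proposal is correct and is essentially identical to the paper's own proof: the paper also sandwiches $c(\tilde{\phi}^{-1}\tilde{\psi}^k\tilde{\phi})-c(\tilde{\psi}^k)$ between $\pm\bigl(c(\tilde{\phi})+c(\tilde{\phi}^{-1})\bigr)$ via subadditivity, invokes $(\tilde{\phi}^{-1}\tilde{\psi}\tilde{\phi})^k=\tilde{\phi}^{-1}\tilde{\psi}^k\tilde{\phi}$, divides by $k$, and lets $k\to\infty$.
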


\begin{proof}

Let $k$ be an integer.
By subadditivity,
\[
	-c(\tilde{\phi}^{-1})-c(\tilde{\phi})\leq c(\tilde{\phi}^{-1}\tilde{\psi}^k\tilde{\phi})-c(\tilde{\psi}^k)\leq c(\tilde{\phi}^{-1})+c(\tilde{\phi}).
\]
Since $(\tilde{\phi}^{-1}\tilde{\psi}\tilde{\phi})^k=\tilde{\phi}^{-1}\tilde{\psi}^k\tilde{\phi}$,
dividing by $k$ and passing to the limit as $k\to\infty$ yields
\[
	\sigma_c(\tilde{\phi}^{-1}\tilde{\psi}\tilde{\phi})=\sigma_c(\tilde{\psi}).\qedhere
\]
\end{proof}

\begin{proof}[Proof of Proposition \ref{disp and vanish}]
Let $X$ be a $c$-superheavy subset of $M$.
Let $U$ be an open subset displaceable from $X$.
By assumption, we can take $\phi\in\Ham(M)$ such that $\phi(U)\cap X=\emptyset$.
Since $X$ is $c$-superheavy, for any $F\in\mathcal{H}\bigl(\phi(U)\bigr)$,
\[
	0=\inf_{S^1\times X}F\leq\sigma_c(\tilde{\varphi}_F)+\langle F\rangle\leq\sup_{S^1\times X}F=0.
\]
Hence, $\phi(U)$ satisfies the asymptotically vanishing spectrum condition with respect to $c$.
Lemma \ref{sigma is ci} implies that $U$ also satisfies the asymptotically vanishing spectrum condition with respect to $c$.
\end{proof}


\section{Delicate Banyaga fragmentation lemma}

To prove the principal theorems, we use the following folklore lemma which is a slightly delicate version of Banyaga's fragmentation lemma (see also \cite[Lemma 2.1]{Ka3}).

\begin{lemma}\label{lemma:uniform}
Let $(M,\omega)$ be a symplectic manifold, $K$ a compact subset of $M$ and $\mathcal{U}$ an open cover of $M$.
Then, there exists a positive number $N_{K,\mathcal{U}}$ such that $\|\varphi_H\|_\mathcal{U}\leq N_{K,\mathcal{U}}$
for any $C^1$-small Hamiltonian $H\colon [0,1]\times M\to\mathbb{R}$ with $\supp(H)\subset [0,1]\times K$.
\end{lemma}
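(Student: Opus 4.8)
The plan is to reduce the statement to a uniform bound on the fragmentation norm of Hamiltonian diffeomorphisms whose generating Hamiltonian is $C^1$-small and supported in a fixed compact set, and then to obtain such a bound from a Lebesgue-number/compactness argument together with the triangle inequality for $\|\cdot\|_{\mathcal{U}}$. First I would fix, by compactness of $K$, a Lebesgue number $\delta>0$ for the cover $\mathcal{U}$ restricted to a slightly larger compact neighborhood of $K$; this gives a finite subcover $U_{\lambda_1},\dots,U_{\lambda_m}$ together with a partition of unity, and, crucially, the property that any sufficiently $C^1$-small Hamiltonian isotopy moves points by less than $\delta$, so that a time-one map of such an isotopy, when cut off to a single plaque, lands in one of the $U_{\lambda_j}$ (up to a Hamiltonian conjugation, which is exactly what $\|\cdot\|_{\mathcal{U}}$ allows). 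The number $N_{K,\mathcal{U}}$ will then be taken to be $m$ (or a fixed multiple of $m$ depending on the bookkeeping below).

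The key steps, in order, would be: (1) choose a finite subcover $\{U_{\lambda_j}\}_{j=1}^m$ of a compact neighborhood $K'$ of $K$ and a subordinate smooth partition of unity $\{\rho_j\}_{j=1}^m$; (2) given a $C^1$-small $H$ supported in $[0,1]\times K$, write $H=\sum_{j=1}^m \rho_j H$, but since the Hamiltonians $\rho_j H$ do not commute one cannot directly conclude $\varphi_H=\prod_j \varphi_{\rho_j H}$ — instead I would use the standard fragmentation induction: set $H^{(0)}=H$ and inductively peel off the piece supported near $U_{\lambda_1}$, showing $\varphi_{H^{(0)}} = \varphi_{G_1}\varphi_{H^{(1)}}$ where $G_1\in\Ham(U_{\lambda_1})$ and $H^{(1)}$ is again $C^1$-small and supported in the union of the remaining $U_{\lambda_j}$'s; (3) observe that at each of the $m$ stages the relevant cutoff Hamiltonian generates an element that, after conjugation by a fixed element of $\Ham(M)$, is supported in one member of $\mathcal{U}_{U_\lambda}$ type covering — more precisely each factor is an element of $\Ham(V)$ for some $V$ with $\psi(V)\subset U_\lambda$ — so each factor contributes at most $1$ to $\|\cdot\|_{\mathcal{U}}$; (4) conclude by subadditivity of the norm that $\|\varphi_H\|_{\mathcal{U}}\le m=:N_{K,\mathcal{U}}$, a bound independent of $H$ among all $C^1$-small such Hamiltonians.

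The main obstacle will be step (2)--(3): making precise in what sense the $C^1$-smallness of $H$ guarantees that each peeled-off factor genuinely lies in (a Hamiltonian image of) a single cover element, and that the ``remaining'' Hamiltonian $H^{(1)}$ stays $C^1$-small with controlled support so that the induction can continue for all $m$ steps with the \emph{same} smallness threshold. The cleanest way I see to handle this is to first shrink the threshold: choose $\varepsilon>0$ small enough that for every $j$, the flow of any Hamiltonian with $C^1$-norm $<\varepsilon$ supported in $K'$ displaces each plaque of diameter $<\delta$ only slightly, so that $\overline{\supp(G_j)}$ lies in a ball that some fixed ambient Hamiltonian diffeomorphism carries into $U_{\lambda_j}$; since there are only finitely many $j$, one may take $\varepsilon$ uniform. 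One should also be slightly careful that the fragmentation induction composes diffeomorphisms rather than Hamiltonians, so the bookkeeping is at the level of isotopies; but since $\|\cdot\|_{\mathcal{U}}$ is defined on $\Ham(M)$ and is subadditive, only the final count of factors matters. A reference to \cite[Lemma 2.1]{Ka3} can be invoked for the quantitative version of this peeling procedure, so the write-up need not reproduce it in full detail.
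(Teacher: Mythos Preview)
Your overall strategy---choose a finite subcover with a subordinate partition of unity, then peel off one factor at a time via the telescoping identity $\varphi_{H^{j}}=\varphi_{H^{j-1}}\varphi_{L^{j}}$---is exactly the paper's approach, and the bound $N_{K,\mathcal{U}}=\ell$ (the number of elements in the finite subcover) is what the paper obtains. There is, however, a genuine confusion in your write-up that you should fix before this becomes a proof.

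You repeatedly invoke Hamiltonian conjugation, saying this is ``exactly what $\|\cdot\|_{\mathcal{U}}$ allows'' and that each factor need only lie in $\Ham(V)$ for some $V$ with $\psi(V)\subset U_\lambda$. That is the definition of $\|\cdot\|_U$ for a single open set $U$, not of $\|\cdot\|_{\mathcal{U}}$ for a covering: for the latter, each factor must lie in $\Ham(U_{\lambda_i})$ for some $U_{\lambda_i}\in\mathcal{U}$, with no conjugation permitted. So the Lebesgue-number/ball/displacement mechanism you sketch is both unnecessary and, as stated, not what the norm measures. The paper resolves the support issue more directly: it chooses \emph{nested} finite covers $\overline{V_i}\subset V'_i$ and $\overline{V'_i}\subset U_i\in\mathcal{U}$, takes the partition of unity subordinate to $\{V_i\}$, sets $H^{j}=\chi_jH$ with $\chi_j=\sum_{i\le j}\rho_i$, and observes that since all the $H^{j}$ are simultaneously $C^1$-small (they are fixed cutoffs times $H$), the flow $\varphi_{H^{j-1}}^t$ keeps $V_j$ inside $V'_j$, whence $\supp(L^j)\subset\overline{V'_j}\subset U_j$. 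This also disposes of your worry about the ``remaining'' Hamiltonian losing smallness under the induction: because the $H^{j}$ are defined all at once as cutoffs of the original $H$, no iterated estimate is needed.
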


\begin{proof}
Since $K$ is compact, we can take finite open coverings $\mathcal{V}=\{V_i\}_{i=1,\ldots,\ell}$ and $\mathcal{V}'=\{V'_i\}_{i=1,\ldots,\ell}$ of $K$ such that
\begin{itemize}
	\item  for any $i$, $\overline{V_i}\subset V'_i$,
	\item  for any $i$ there exists $U_i\in\mathcal{U}$ such that $\overline{V'_i}\subset U_i$.
\end{itemize}

Take a partition of unity $\{\rho_i\colon K\to[0,1]\}_{i=1,\ldots,\ell}$ subordinated to $\mathcal{V}$
(i.e., $\supp(\rho_i)\subset V_i$ for any $i$).
We then define functions $\chi_j\colon K\to[0,1]$ ($j=0,1,\ldots,\ell$) as
\[
	\chi_j=
	\begin{cases}
	0 & \text{if\: $j=0$},\\
	\sum_{i=1}^j\rho_i & \text{if\: $j=1,\ldots,\ell$}.
	\end{cases}
\]
For a Hamiltonian $H\colon [0,1]\times M\to\mathbb{R}$ with $\supp(H)\subset [0,1]\times K$,
we define Hamiltonians $H^j$ ($j=0,1,\ldots,\ell$) and $L^j$ ($j=1,\ldots,\ell$) as
\[
	H^j(t,x)=\chi_j(x)H(t,x)
\]
and
\[
	L^j(t,x)=-H^{j-1}\bigl(t,\varphi_{H^{j-1}}^t(x)\bigr)+H^j\bigl(t,\varphi_{H^{j-1}}^t(x)\bigr)
\]
for $(t,x)\in [0,1]\times K$, respectively.
Since $\supp(H)\subset [0,1]\times K$, we can regard $H^j$ and $L^j$ as smooth functions on $[0,1]\times M$.
Fix $j=1,\ldots,\ell$.
Note that $L^j$ generates the Hamiltonian diffeomorphism $\varphi_{H^{j-1}}^{-1}\varphi_{H^j}$
and thus $\varphi_{H^j}=\varphi_{H^{j-1}}\varphi_{L^j}$.
Since $H^\ell=H$ and $H^0=0$,
\[
	\varphi_H=\varphi_{H^\ell}=\varphi_{H^{\ell-1}}\varphi_{L^{\ell}}=\cdots=\varphi_{H^0}\varphi_{L^1}\cdots\varphi_{L^\ell}=\varphi_{L^1}\cdots\varphi_{L^\ell}.
\]

Now, we claim that $\supp(L^{j})\subset [0,1]\times\overline{V'_j}$ if $H$ is $C^1$-small.
Since $H^{j-1}$ is also $C^1$-small,
$(\varphi_{H^{j-1}}^t)^{-1}(V_{j})\subset V'_j$.
Suppose that $x\notin\overline{V'_j}$.
Then, $(\varphi_{H^{j-1}}^t)(x)\notin\overline{V_j}$ and in particular, $(\varphi_{H^{j-1}}^t)(x)\notin\supp(\rho_j)$.
Since $\chi_j=\sum_{i=1}^{j}\rho_i$,
$\chi_{j-1}\bigl(\varphi_{H^{j-1}}^t(x)\bigr)=\chi_{j}\bigl(\varphi_{H^{j-1}}^t(x)\bigr)$ for any $t$.
Hence, $L^j(t,x)=0$ for any $x\notin \overline{V'_j}$ and any $t\in[0,1]$.
This completes the proof of the claim.

By $\supp(L^{j})\subset [0,1]\times\overline{V'_j}$ and the second condition on $\mathcal{V}'$,
$\supp(L^{j})\subset [0,1]\times U_j$.
Therefore, since $\varphi_H=\varphi_{L^1}\cdots\varphi_{L^\ell}$,
\[
	\|\varphi_H\|_\mathcal{U}\leq \ell.
\]
Thus, we can take $\ell$ as $N_{K,\mathcal{U}}$ in Lemma \ref{lemma:uniform}. 
\end{proof}


\section{Proof of the principal theorems}\label{section:pf of thm}

In this section, we prove Theorems \ref{theorem:main}, \ref{theorem:main2}, and \ref{theorem:main3}.

\subsection{Proof of Theorem \ref{theorem:main}}

\begin{proof}
For $i=1,\ldots,N$,
we choose a normalized time-independent Hamiltonian $H_i\colon M\to\mathbb{R}$ such that $H_i|_{X_i}\equiv 1$
and $X_j\cap\supp(H_i)=\emptyset$ whenever $j\in\{0,1,\ldots,N\}\setminus\{i\}$.
We define an injective homomorphism $I\colon\mathbb{R}^N\to\Ham(M)$ to be
\[
	I(r_1,\ldots,r_N)=\varphi_{r_1H_1+\cdots+r_N H_N}.
\]
Hence, it is enough to show that $I$ is bi-Lipschitz.

We fix $i=0,1,\ldots,N$ and set $r_0=0$.
Since $X_i$ is $c_i$-superheavy and $(r_1H_1+\cdots+r_NH_N)|_{X_i}\equiv r_i$,
Proposition \ref{proposition:shv} implies that
\begin{equation}\label{eq:shv}
	\sigma_{c_i}(\tilde{\varphi}_{r_1H_1+\cdots+r_N H_N})=r_i.
\end{equation}

We set $\alpha=\|\varphi_{r_1H_1+\cdots+r_N H_N}\|_U$.
Let us designate that
\[
	\varphi_{r_1H_1+\cdots+r_N H_N}=\varphi_{F_1}\cdots\varphi_{F_{\alpha}},
\]
where $F_{\ell}\in\mathcal{H}\bigl(\phi_{\ell}(U)\bigr)$ for some $\phi_{\ell}\in\Ham(M)$ for $\ell=1,\ldots,\alpha$.
Since $U$ satisfies the normally bounded spectrum condition with respect to $c_i$,
Proposition \ref{proposition:defect1} implies that there exists a positive number $K_i>0$ such that
\[
	\lvert\sigma_{c_i}(\tilde{\varphi}_{F_1}\cdots\tilde{\varphi}_{F_{\alpha}})-\sigma_{c_i}(\tilde{\varphi}_{F_1})-\sigma_{c_i}(\tilde{\varphi}_{F_2}\cdots\tilde{\varphi}_{F_{\alpha}})\rvert\leq K_i,
\]
\[
	\lvert\sigma_{c_i}(\tilde{\varphi}_{F_2}\cdots\tilde{\varphi}_{F_{\alpha}})-\sigma_{c_i}(\tilde{\varphi}_{F_2})-\sigma_{c_i}(\tilde{\varphi}_{F_3}\cdots\tilde{\varphi}_{F_{\alpha}})\rvert\leq K_i,
\]
\[
	\cdots
\]
\[
	\lvert\sigma_{c_i}(\tilde{\varphi}_{F_{\alpha-1}}\tilde{\varphi}_{F_{\alpha}})-\sigma_{c_i}(\tilde{\varphi}_{F_{\alpha-1}})-\sigma_{c_i}(\tilde{\varphi}_{F_{\alpha}})\rvert\leq K_i.
\]
Therefore, by Proposition \ref{proposition:VPNBSC} and the triangle inequality,
\begin{equation}\label{eq:mdefect}
	\left\lvert\sigma_{c_i}(\tilde{\varphi}_{F_1}\cdots\tilde{\varphi}_{F_{\alpha}})%
	+\sum_{\ell=1}^{\alpha}\langle F_{\ell}\rangle\right\rvert\leq(\alpha-1)K_i<\alpha K_i.
\end{equation}

Now, we define a map $\sigma'_i\colon\widetilde{\Ham}(M)\to\mathbb{R}$ to be $\sigma'_i=\sigma_{c_i}-\sigma_{c_0}$.
Then, by \eqref{eq:shv} and \eqref{eq:mdefect}, we obtain
\[
	\sigma'_i(\tilde{\varphi}_{r_1H_1+\cdots+r_N H_N})=r_i-r_0=r_i,
\]
and
\[
	\lvert\sigma'_i(\tilde{\varphi}_{F_1}\cdots\tilde{\varphi}_{F_{\alpha}})\rvert%
	<\alpha (K_i+K_0).
\]

Since the natural projection $\pi\colon\widetilde{\Ham}(M)\to\Ham(M)$ is a group homomorphism,
\[
	\pi(\tilde{\varphi}_{F_1}\cdots\tilde{\varphi}_{F_{\alpha}})%
	=\varphi_{F_1}\cdots\varphi_{F_{\alpha}}%
	=\varphi_{r_1H_1+\cdots+r_N H_N}.
\]
By assumption, $c_i$ descends asymptotically to $\Ham(M)$.
Hence, the map $\sigma'_i=\sigma_{c_i}-\sigma_{c_0}$ induces a map $\bar{\sigma}'_i\colon\Ham(M)\to\mathbb{R}$ such that $\sigma'_i=\bar{\sigma}'_i\circ\pi$.
Thus,
\begin{align*}
	\lvert r_i\rvert%
	&=\left\lvert\sigma'_i(\tilde{\varphi}_{r_1H_1+\cdots+r_N H_N})\right\rvert%
	=\lvert\bar{\sigma}'_i(\varphi_{F_1}\cdots\varphi_{F_{\alpha}})\rvert\\
	&=\lvert\sigma'_i(\tilde{\varphi}_{F_1}\cdots\tilde{\varphi}_{F_{\alpha}})\rvert%
	<\alpha(K_i+K_0).
\end{align*}
Hence,
\[
	\|\varphi_{r_1H_1+\cdots+r_N H_N}\|_U=\alpha%
	>(K_i+K_0)^{-1}\lvert r_i\rvert.
\]
Therefore,
\[
	\|\varphi_{r_1H_1+\cdots+r_N H_N}\|_U%
	>N^{-1}\left(\max_{1\leq i\leq N}K_i+K_0\right)^{-1}(\lvert r_1\rvert+\cdots+\lvert r_N\rvert).
\]

On the other hand, since $\supp(H_i)$ is compact for any $i$, by Lemma \ref{lemma:uniform},
there exist positive numbers $N_{i,U}$ and $\varepsilon$ such that
for any $0\leq t\leq\varepsilon$,
\[
	\|\varphi_{tH_i}\|_U=\|\varphi_{tH_i}\|_{\mathcal{U}_U}\leq N_{i,U}.
\]
Set $N_U=\max_i{N_{i,U}}$.
For each $i=1,\ldots,N$, choose a non-negative integer $a_i$ and a non-negative number $b_i$ with $b_i<\varepsilon$ such that
$r_i=a_i\varepsilon+b_i$.
Then,
\begin{align*}
	\|\varphi_{r_1H_1+\cdots+r_N H_N}\|_U%
	&\leq\|\varphi_{r_1H_1}\|_U+\cdots+\|\varphi_{r_NH_N}\|_U\\
	&\leq\|\varphi_{\varepsilon H_1}^{a_1}\varphi_{b_1H_1}\|_U+\cdots+\|\varphi_{\varepsilon H_N}^{a_N}\varphi_{b_NH_N}\|_U\\
	&\leq (a_1+\cdots+a_N+N)N_U.
\end{align*}
This completes the proof of Theorem \ref{theorem:main}.
\end{proof}


\subsection{Proof of Theorem \ref{theorem:main2}}

\begin{proof}
Since the proof is almost same as that of Theorem \ref{theorem:main},
we provide only the necessary changes.

Let $H_1,\ldots,H_N$ and $I\colon\mathbb{R}^N\to\Ham(M)$ be chosen as in the proof of Theorem \ref{theorem:main}.
We fix $i=0,1,\ldots,N$ and set $r_0=0$.
We set $\alpha=\|\varphi_{r_1H_1+\cdots+r_N H_N}\|_U$ and
\[
	\varphi_{r_1H_1+\cdots+r_N H_N}=(\phi_1^{-1}\varphi_{F_1}\phi_1)\cdots(\phi_{\alpha}^{-1}\varphi_{F_{\alpha}}\phi_{\alpha}),
\]
where $F_{\ell}\in\mathcal{H}(U)$ and $\phi_{\ell}\in\Ham(M)$ for $\ell=1,\ldots,\alpha$.
Since $U$ satisfies the asymptotically vanishing spectrum condition with respect to $c_i$,
Lemma \ref{sigma  is ci} implies that
\[
	\sigma_{c_i}(\tilde{\phi}_{\ell}^{-1}\tilde{\varphi}_{F_{\ell}}\tilde{\phi}_{\ell})%
	=\sigma_{c_i}(\tilde{\varphi}_{F_{\ell}})%
	=-\langle F_{\ell}\rangle
\]
for all $\ell$.
We define a map $\sigma'_i\colon\widetilde{\Ham}(M)\to\mathbb{R}$ to be $\sigma'_i=\sigma_{c_i}-\sigma_{c_0}$.
Then,
\begin{equation}\label{eq:AVP}
	\sigma'_i(\tilde{\phi}_{\ell}^{-1}\tilde{\varphi}_{F_{\ell}}\tilde{\phi}_{\ell})=-\langle F_{\ell}\rangle+\langle F_{\ell}\rangle=0
\end{equation}
for all $\ell$.

On the other hand, since the homogenization of a quasi-morphism is also a quasi-morphism (see, for example, \cite{Ca}),
there exists a positive number $K_i>0$ such that
\[
	\left\lvert\sigma_{c_i}(\tilde{\phi}\tilde{\psi})-\sigma_{c_i}(\tilde{\phi})-\sigma_{c_i}(\tilde{\psi})\right\rvert<K_i
\]
for any $\tilde{\phi},\tilde{\psi}\in\widetilde{\Ham}(M)$.
Hence, we obtain
\begin{equation}\label{eq:mdefect2}
	\left\lvert\sigma'_i(\tilde{\phi}\tilde{\psi})-\sigma'_i(\tilde{\phi})-\sigma'_i(\tilde{\psi})\right\rvert<K_i+K_0
\end{equation}
for any $\tilde{\phi},\tilde{\psi}\in\widetilde{\Ham}(M)$.
Using \eqref{eq:AVP} and \eqref{eq:mdefect2} several times yields
\[
	\left\lvert\sigma'_i\left((\tilde{\phi}_1^{-1}\tilde{\varphi}_{F_1}\tilde{\phi}_1)\cdots(\tilde{\phi}_{\alpha}^{-1}\tilde{\varphi}_{F_{\alpha}}\tilde{\phi}_{\alpha})\right)\right\rvert%
	<(\alpha-1)(K_i+K_0)<\alpha (K_i+K_0).
\]
Then, the remainder of the proof follows the same path as in Theorem \ref{theorem:main}.
\end{proof}


\subsection{Proof of Theorem \ref{theorem:main3}}

\begin{proof}
Let $H_1,\ldots,H_N$ and $I\colon\mathbb{R}^N\to\Ham(M)$ be chosen as in the proof of Theorem \ref{theorem:main}.
We fix $i=1,\ldots,N$ and set $r_0=0$.
We define a map $\sigma'_i\colon\widetilde{\Ham}(M)\to\mathbb{R}$ to be $\sigma'_i=\sigma_{c_i}-\sigma_{c_0}$.
Then, by \eqref{eq:shv},
\[
	\sigma'_i(\widetilde{\varphi}_{r_1H_1+\cdots+r_N H_N})=r_i-r_0=r_i.
\]

We set $\alpha=\|\varphi_{r_1H_1+\cdots+r_N H_N}\|_{\mathcal{U}}$.
Let us denote
\[
	\varphi_{r_1H_1+\cdots+r_N H_N}=\varphi_{F_1}\cdots\varphi_{F_{\alpha}},
\]
where $F_{\ell}\in\mathcal{H}(U_{\lambda_{\ell}})$ for some $U_{\lambda_{\ell}}\in\mathcal{U}$.

Since $U_{\lambda_{\ell}}$ satisfies the bounded spectrum condition with respect to $ c_i$ and $ c_0$ for all $\ell$,
there exist positive numbers $K_i,K_0>0$ such that
\begin{equation}\label{eq:bdd}
	 c_i(\tilde{\varphi}_{F_{\ell}})+\langle F_{\ell}\rangle<K_i%
	\quad\text{and}\quad%
	 c_0(\tilde{\varphi}_{F_{\ell}}^{-1})-\langle F_{\ell}\rangle<K_0
\end{equation}
for all $\ell$.
We claim that
\[
	\left\lvert\sigma'_i(\tilde{\varphi}_{F_1}\cdots\tilde{\varphi}_{F_{\alpha}})\right\rvert%
	<\alpha(K_i+K_0).
\]
Indeed, by subadditivity,
\[
	 c_i\left((\tilde{\varphi}_{F_1}\cdots\tilde{\varphi}_{F_{\alpha}})^k\right)\leq k c_i(\tilde{\varphi}_{F_1})+\cdots+k c_i(\tilde{\varphi}_{F_{\alpha}}),
\]
\[
	 c_0\left((\tilde{\varphi}_{F_{\alpha}}^{-1}\cdots\tilde{\varphi}_{F_1}^{-1})^k\right)\leq k c_0(\tilde{\varphi}_{F_1}^{-1})+\cdots+k c_0(\tilde{\varphi}_{F_{\alpha}}^{-1}),
\]
and
\[
	- c_0\left((\tilde{\varphi}_{F_1}\cdots\tilde{\varphi}_{F_{\alpha}})^k\right)\leq c_0\left((\tilde{\varphi}_{F_{\alpha}}^{-1}\cdots\tilde{\varphi}_{F_1}^{-1})^k\right)-c(\mathbbm{1}).
\]
By combining with \eqref{eq:bdd}, we obtain
\begin{align*}
	 c_i\left((\tilde{\varphi}_{F_1}\cdots\tilde{\varphi}_{F_{\alpha}})^k\right)- c_0\left((\tilde{\varphi}_{F_1}\cdots\tilde{\varphi}_{F_{\alpha}})^k\right)%
	&\leq k\sum_{\ell=1}^{\alpha}\left( c_i(\tilde{\varphi}_{F_{\ell}})+ c_0(\tilde{\varphi}_{F_{\ell}}^{-1})\right)-c(\mathbbm{1})\\
	&<k\alpha(K_i+K_0)-c(\mathbbm{1}).
\end{align*}
Similarly,
\[
	 c_0\left((\tilde{\varphi}_{F_1}\cdots\tilde{\varphi}_{F_{\alpha}})^k\right)- c_i\left((\tilde{\varphi}_{F_1}\cdots\tilde{\varphi}_{F_{\alpha}})^k\right)
	<k\alpha(K_i+K_0)-c(\mathbbm{1}).
\]
Therefore,\[
	\left\lvert c_i\left((\tilde{\varphi}_{F_1}\cdots\tilde{\varphi}_{F_{\alpha}})^k\right)- c_0\left((\tilde{\varphi}_{F_1}\cdots\tilde{\varphi}_{F_{\alpha}})^k\right)\right\rvert%
	<\lvert k\alpha(K_i+K_0)-c(\mathbbm{1})\rvert.
\]
Thus, dividing by $k$ and passing to the limit as $k\to\infty$ yields
\[
	\lvert\sigma'_i(\tilde{\varphi}_{F_1}\cdots\tilde{\varphi}_{F_{\alpha}})\rvert%
	=\lim_{k\to\infty}\frac{\left\lvert c_i\left((\tilde{\varphi}_{F_1}\cdots\tilde{\varphi}_{F_{\alpha}})^k\right)- c_0\left((\tilde{\varphi}_{F_1}\cdots\tilde{\varphi}_{F_{\alpha}})^k\right)\right\rvert}k%
	<\alpha(K_i+K_0).
\]

Since the natural projection $\pi\colon\widetilde{\Ham}(M)\to\Ham(M)$ is a group homomorphism,
\[
	\pi(\tilde{\varphi}_{F_1}\cdots\tilde{\varphi}_{F_{\alpha}})=\varphi_{F_1}\cdots\varphi_{F_{\alpha}}=\varphi_{r_1H_1+\cdots+r_N H_N}.
\]
By assumption, $c_i$ and $c_0$ descend asymptotically to $\Ham(M)$.
Hence, the map $\sigma'_i=\sigma_{c_i}-\sigma_{c_0}$ induces a map $\bar{\sigma}'_i\colon\Ham(M)\to\mathbb{R}$ such that $\sigma'_i=\bar{\sigma}'_i\circ\pi$.
Thus,
\[
	\lvert r_i\rvert=\lvert\sigma'_i(\tilde{\varphi}_{r_1H_1+\cdots+r_N H_N})\rvert%
	=\lvert\bar{\sigma}'_i(\varphi_{F_1}\cdots\varphi_{F_{\alpha}})\rvert%
	=\lvert\sigma'_i(\tilde{\varphi}_{F_1}\cdots\tilde{\varphi}_{F_{\alpha}})\rvert%
	<\alpha(K_i+K_0).
\]
Then, the remainder of the proof follows the same path as in Theorem \ref{theorem:main}.
\end{proof}


\section{Lagrangian spectral invariants}\label{section:lagspecinv}

Lagrangian spectral invariants for monotone Lagrangian submanifolds were defined by Leclercq and Zapolsky \cite{LZ}.
In this section, we review their properties and prove the corollaries given in Section \ref{section:app}.

Let $(M,\omega)$ be a closed symplectic manifold.
Let $L$ be a monotone Lagrangian submanifold of $(M,\omega)$ with minimal Maslov number $N_L\geq 2$
(For the definitions of the monotonicity and the minimal Maslov number of a Lagrangian submanifold, see \cite{Oh96,BC,LZ} for example).

We fix a commutative ring $R$. 
Assuming that $L$ is relatively $\mathrm{Pin}^{\pm}$ (see \cite[Section 7.1]{Za} for the definition),
Zapolsky defined the \textit{Lagrangian quantum homology}
\footnote{
In Leclercq and Zapolsky's terminology, our Lagrangian quantum homology $\QH_{\ast}(L;R)$ (resp., Lagrangian Floer homology $\HF_{\ast}(L;R)$)
is the \textit{quotient} Lagrangian quantum homology $\QH_{\ast}^{\pi_2^0(M,L)}(L;R)$ (resp., \textit{quotient} Lagrangian Floer homology $\HF_{\ast}^{\pi_2^0(M,L)}(L;R)$),
where $\pi_2^0(M,L)$ is the kernel of $[\omega]\colon \pi_2(M,L)\to\mathbb{R}$.
}
$\QH_{\ast}(L;R)$ of $L$ \cite[Sections 4 and 7.3]{Za}.
Moreover, he defined the \textit{Lagrangian Floer homology} $\HF_{\ast}(L;R)$ of $L$
and proved that there exists an isomorphism $\QH_{\ast}(L;R)\cong\HF_{\ast}(L;R)$
called the \textit{Piunikhin--Salamon--Schwarz isomorphism} \cite[Sections 5 and 7.3]{Za}.
His work generalizes that of Oh and that of Biran and Cornea \cite{Oh96,BC}.

We assume that $\QH_{\ast}(L;R)\neq0$.
Following \cite[Section 3]{LZ}, one can define the Lagrangian spectral invariant $c^{(L;R)}\colon\widetilde{\Ham}(M)\to\mathbb{R}$
associated with the fundamental class $[L]\in\QH_{\ast}(L;R)$.
Moreover, Leclercq and Zapolsky proved that $c^{(L;R)}$ is a subadditive invariant \cite[Theorem 41]{LZ}.

To prove Corollaries \ref{cp2}, \ref{surface main theorem}, \ref{annulus frag} and Theorem \ref{calabi on surface}, we first prove the following result.

\begin{theorem}\label{Lagrangian descending}
If $(M,\omega)$ is either $(\mathbb{C}P^n,\omega_{\mathrm{FS}})$ or $(\Sigma_{\vec{g}},\omega)$,
then the Lagrangian spectral invariant $c^{(L;R)}\colon\widetilde{\Ham}(M)\to\mathbb{R}$ descends asymptotically to $\Ham(M)$.
\end{theorem}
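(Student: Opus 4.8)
By Lemma~\ref{equiv of descending}, it suffices to show that $\sigma_{c^{(L;R)}}|_{\pi_1(\Ham(M))}=0$; equivalently, by Proposition~\ref{comparing descending}, it suffices to find a subadditive invariant $c'$ on $\widetilde{\Ham}(M)$ that descends asymptotically to $\Ham(M)$ and dominates $c^{(L;R)}$ on Hamiltonian isotopies, i.e., $c^{(L;R)}(\tilde\varphi_H)\leq c'(\tilde\varphi_H)$ for every $H$. The natural candidate for $c'$ is the Oh--Schwarz Hamiltonian spectral invariant $c_{\mathrm{OS}}(\tilde\varphi_H)=c(\mathbbm{1};H)$ associated with the fundamental class, which is well known to descend asymptotically to $\Ham(M)$ whenever $(M,\omega)$ admits this structure and in particular for $(\mathbb{C}P^n,\omega_{\mathrm{FS}})$ and for the (aspherical or non-aspherical) product surfaces $(\Sigma_{\vec g},\omega)$, where the asymptotic spectral invariant is a homogeneous quasi-morphism (Entov--Polterovich, Py).

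The first step is to establish the comparison inequality $c^{(L;R)}\leq c_{\mathrm{OS}}$ on $\widetilde{\Ham}(M)$. This is a standard feature of the Leclercq--Zapolsky package: the closed--open map (or the module structure of $\QH_\ast(L;R)$ over the quantum homology $QH_\ast(M;R)$, together with the PSS comparison) yields that the Lagrangian spectral invariant of a class is bounded above by the Hamiltonian spectral invariant of the corresponding ambient class. One must check that the relevant hypotheses hold in our setting: $L$ monotone with $N_L\geq 2$, relatively $\mathrm{Pin}^\pm$, and $QH_\ast(L;R)\neq 0$, which are exactly the standing assumptions. For $\mathbb{C}P^n$ the relevant monotone Lagrangians (e.g.\ $\mathbb{R}P^n$, the Clifford and Chekanov tori) satisfy these, and for $\Sigma_{\vec g}$ one works with monotone Lagrangian tori or products thereof.

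The second step is to invoke the known fact that $c_{\mathrm{OS}}$ descends asymptotically on these manifolds. For $\mathbb{C}P^n$ this is because $(\mathbb{C}P^n,\omega_{\mathrm{FS}})$ is monotone with $\pi_1(\Ham)$ acting trivially on the relevant part of the spectrum, so $\sigma_{c_{\mathrm{OS}}}$ factors through $\Ham$; concretely one cites Entov--Polterovich's construction of the Calabi quasi-morphism on $\Ham(\mathbb{C}P^n)$. For $\Sigma_{\vec g}$ one cites the Entov--Polterovich partial quasi-morphism (for the product of surfaces of genus $\geq 1$) or, in the genus-$1$ and higher cases, the quasi-morphisms of Py and of Brandenbursky--K\c{e}dra--Shelukhin; in all cases $\sigma_{c_{\mathrm{OS}}}$ is a homogeneous quasi-morphism on $\widetilde{\Ham}$ that vanishes on $\pi_1(\Ham)$, hence descends asymptotically. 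Combining Steps~1 and~2 with Proposition~\ref{comparing descending} gives the claim.

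The main obstacle is Step~1: verifying that the Leclercq--Zapolsky formalism over a general coefficient ring $R$ (and in the quotient setup of the footnote) genuinely provides the inequality $c^{(L;R)}(\tilde\phi)\leq c_{\mathrm{OS}}(\tilde\phi)$ for \emph{all} $\tilde\phi\in\widetilde{\Ham}(M)$, not merely a bounded defect between the two; the clean inequality follows from the unital module (triangle-product) property $c(L;a*b)\leq c(M;a)+c(L;b)$ applied with $b=[L]$ and $a=[M]$, together with $[M]*[L]=[L]$ and the spectrality/normalization of $c_{\mathrm{OS}}$, but one should make sure this identity and the relevant PSS naturality are available in Zapolsky's generality. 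A secondary point is confirming that such a monotone, relatively $\mathrm{Pin}^\pm$ Lagrangian with nonzero quantum homology actually exists in each manifold under consideration (which it does), so that the statement is non-vacuous.
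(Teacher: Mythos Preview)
Your proposal is correct and follows essentially the same route as the paper: compare $c^{(L;R)}$ with the Oh--Schwarz invariant $c^{(M;R)}$ via the Leclercq--Zapolsky module inequality (the paper records this as Proposition~\ref{ham and lagr}, citing \cite[Proposition 5]{LZ}), then apply Proposition~\ref{comparing descending} together with the known asymptotic descent of $c^{(M;R)}$. The only minor discrepancy is in the citations for the $\Sigma_{\vec g}$ case: the paper invokes Schwarz \cite{Sch} (the symplectically aspherical case, where $c^{(M;R)}$ in fact descends, not merely asymptotically), whereas you point to the Py and Brandenbursky--K\c{e}dra--Shelukhin quasi-morphisms, which are \emph{not} the Oh--Schwarz invariant and so do not directly give what you need; Schwarz's result is the clean reference here.
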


One can also define the quantum homology
\footnote{
Similar to the above, our quantum homology $\QH_{\ast}(M;R)$
is Zapolsky's \textit{quotient} quantum homology $\QH_{\ast}^{\pi_2^0(M)}(M;R)$,
where $\pi_2^0(M)$ is the kernel of $[\omega]\colon \pi_2(M)\to\mathbb{R}$.
}
$\QH_{\ast}(M;R)$ of the ambient manifold $(M,\omega)$ \cite[Sections 4.5 and 7.2]{Za}.
Let $c^{(M;R)}\colon\widetilde{\Ham}(M)\to\mathbb{R}$ be the Oh--Schwarz spectral invariant associated with the fundamental class $[M]\in \QH_{\ast}(M;R)$.
$c^{(M;R)}$ is also a subadditive invariant (see, for example, \cite[Theorem I]{Oh05}).

Now, we have the \textit{quantum module action}
\[
	\bullet\colon\QH_{\ast}(M;R)\otimes\QH_{\ast}(L;R)\to\QH_{\ast}(L;R)
\]
(see \cite[Section 7.4]{Za}, \cite[Section 2.5.3]{LZ}).
\cite[Proposition 5]{LZ} then yields the following inequality as a corollary.


\begin{proposition}[{\cite[Proposition 5]{LZ}}]\label{ham and lagr}
For any Hamiltonian $H\colon S^1\times M\to\mathbb{R}$,
\[
	c^{(L;R)}(\tilde{\varphi}_H)\leq c^{(M;R)}(\tilde{\varphi}_H).
\]
\end{proposition}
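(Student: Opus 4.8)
The plan is to obtain the inequality as an immediate specialization of the module action triangle inequality proved by Leclercq and Zapolsky. Recall that \cite[Proposition 5]{LZ} states that the spectral invariants are compatible with the quantum module action $\bullet\colon\QH_{\ast}(M;R)\otimes\QH_{\ast}(L;R)\to\QH_{\ast}(L;R)$ in the following sense: writing $c^{(M;R)}(\cdot\,;a)$ and $c^{(L;R)}(\cdot\,;b)$ for the Oh--Schwarz and Lagrangian spectral invariants associated with classes $a\in\QH_{\ast}(M;R)$ and $b\in\QH_{\ast}(L;R)$, one has
\[
	c^{(L;R)}(\tilde{\phi}\tilde{\psi}\,;a\bullet b)\leq c^{(M;R)}(\tilde{\phi}\,;a)+c^{(L;R)}(\tilde{\psi}\,;b)
\]
for all $\tilde{\phi},\tilde{\psi}\in\widetilde{\Ham}(M)$. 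Our $c^{(M;R)}$ and $c^{(L;R)}$ are precisely $c^{(M;R)}(\cdot\,;[M])$ and $c^{(L;R)}(\cdot\,;[L])$.

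First I would set $a=[M]$, the unit of the quantum ring $\QH_{\ast}(M;R)$, and $b=[L]$, the fundamental class. Since $\QH_{\ast}(L;R)$ is a unital module over $\QH_{\ast}(M;R)$, we have $[M]\bullet[L]=[L]$, so the inequality becomes
\[
	c^{(L;R)}(\tilde{\phi}\tilde{\psi})\leq c^{(M;R)}(\tilde{\phi})+c^{(L;R)}(\tilde{\psi}\,;[L]).
\]
Next I would put $\tilde{\psi}=\mathbbm{1}$ and invoke the normalization axiom for the Lagrangian spectral invariant associated with the fundamental class, namely $c^{(L;R)}(\mathbbm{1}\,;[L])=0$, which is part of the package established in \cite[Section 3]{LZ} (following \cite{Za}). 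Taking $\tilde{\phi}=\tilde{\varphi}_H$ then yields exactly $c^{(L;R)}(\tilde{\varphi}_H)\leq c^{(M;R)}(\tilde{\varphi}_H)$, as claimed.

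The only point requiring care is the bookkeeping of conventions: one must check that the normalization under which \cite[Proposition 5]{LZ} is stated matches the one used here (in particular $c^{(M;R)}(\mathbbm{1})=0$ for the class $[M]$), and that unitality of the quantum module action holds literally in Zapolsky's quotient setup for $\QH_{\ast}(M;R)$ and $\QH_{\ast}(L;R)$. Both are standard facts from \cite{LZ,Za}, so I do not expect any genuine obstacle; the proof is a one-line specialization once these identifications are in place.
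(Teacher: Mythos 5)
Your proposal is correct and follows essentially the same route as the paper, which gives no separate argument beyond citing the quantum module action and \cite[Proposition 5]{LZ}; your write-up simply spells out the specialization $a=[M]$, $b=[L]$, $\tilde{\psi}=\mathbbm{1}$ that the paper leaves implicit. The convention checks you flag (unitality $[M]\bullet[L]=[L]$ in Zapolsky's quotient setup and $c^{(L;R)}(\mathbbm{1})=0$) are exactly the right points to verify, and both hold in \cite{LZ,Za}.
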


\begin{proof}[Proof of Theorem \ref{Lagrangian descending}]
As a consequence of Schwarz \cite{Sch}, $c^{(\Sigma_{\vec{g}};R)}$ descends asymptotically to $\Ham(\Sigma_{\vec{g}})$.
As a consequence of Entov and Polterovich \cite{EP03}, $c^{(\mathbb{C}P^n;R)}$ descends asymptotically to $\Ham(\mathbb{C}P^n)$.
Thus, Theorem \ref{Lagrangian descending} follows from Propositions \ref{comparing descending} and \ref{ham and lagr}.
\end{proof}

When $c^{(M;R)}\colon\widetilde{\Ham}(M)\to\mathbb{R}$ is a quasi-morphism,
Proposition \ref{ham and lagr} enables us to prove the following proposition.

\begin{proposition}\label{quasiquasi}
If $c^{(M;R)}$ is a quasi-morphism, then $c^{(L;R)}$ is as well.
\end{proposition}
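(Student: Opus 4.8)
The plan is to deduce the quasi-morphism property of $c^{(L;R)}$ from that of $c^{(M;R)}$ by combining the subadditivity of $c^{(L;R)}$ (which we already have from Leclercq--Zapolsky) with the comparison inequality in Proposition \ref{ham and lagr}, together with the reverse comparison obtained by applying that inequality to inverses. Recall that a subadditive invariant $c$ is a quasi-morphism precisely when the defect $\sup_{\tilde\phi,\tilde\psi}|c(\tilde\phi\tilde\psi)-c(\tilde\phi)-c(\tilde\psi)|$ is finite; since $c^{(L;R)}$ is already subadditive, it suffices to bound $c(\tilde\phi)+c(\tilde\psi)-c(\tilde\phi\tilde\psi)$ from above by a constant, where I abbreviate $c=c^{(L;R)}$.

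First I would record the two ingredients I need from the excerpt and from Leclercq--Zapolsky's paper: (i) for any Hamiltonian $H$, $c^{(L;R)}(\tilde\varphi_H)\leq c^{(M;R)}(\tilde\varphi_H)$, which is Proposition \ref{ham and lagr}; and (ii) the Oh--Schwarz spectral invariant $c^{(M;R)}$ satisfies a Poincar\'e-duality--type identity relating $c^{(M;R)}(\tilde\varphi_H^{-1})$ to the spectral number of the point class (or equivalently, that $c^{(M;R)}(\tilde\phi)+c^{(M;R)}(\tilde\phi^{-1})$ is bounded, which is exactly the statement that $c^{(M;R)}$ has bounded ``asymmetry'' — this is standard for Oh--Schwarz invariants and, under the hypothesis that $c^{(M;R)}$ is a homogeneous-type quasi-morphism, automatic). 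Applying (i) to $\tilde\varphi_H^{-1}$ and using (ii) yields a matching \emph{lower} bound $c^{(L;R)}(\tilde\varphi_H)\geq c^{(M;R)}(\tilde\varphi_H)-C_0$ for a constant $C_0$ independent of $H$. Thus $c^{(L;R)}$ and $c^{(M;R)}$ differ by a uniformly bounded amount on the image of the exponential map, i.e.\ on all of $\widetilde{\Ham}(M)$ (every element is $\tilde\varphi_H$ for some $H$).

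Granting this, the quasi-morphism property transfers immediately: for $\tilde\phi,\tilde\psi\in\widetilde{\Ham}(M)$,
\[
	|c^{(L;R)}(\tilde\phi\tilde\psi)-c^{(L;R)}(\tilde\phi)-c^{(L;R)}(\tilde\psi)|
	\leq |c^{(M;R)}(\tilde\phi\tilde\psi)-c^{(M;R)}(\tilde\phi)-c^{(M;R)}(\tilde\psi)| + 3C_0,
\]
and the right-hand side is bounded by $D(c^{(M;R)})+3C_0$, where $D(c^{(M;R)})$ is the defect of the quasi-morphism $c^{(M;R)}$. Hence $c^{(L;R)}$ has finite defect and is a quasi-morphism.

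The main obstacle, and the step I would be most careful about, is establishing the reverse inequality $c^{(L;R)}\geq c^{(M;R)}-C_0$ cleanly. The honest route is: apply Proposition \ref{ham and lagr} to $\bar H$ (the Hamiltonian generating $\tilde\varphi_H^{-1}$), giving $c^{(L;R)}(\tilde\varphi_H^{-1})\leq c^{(M;R)}(\tilde\varphi_H^{-1})$; then invoke the duality/subadditivity relation $-c^{(L;R)}(\tilde\varphi_H^{-1})\leq c^{(L;R)}(\tilde\varphi_H) - c^{(L;R)}(\mathbbm 1)$ on the Lagrangian side and the corresponding bound $c^{(M;R)}(\tilde\varphi_H)\leq -c^{(M;R)}(\tilde\varphi_H^{-1}) + (\text{const})$ on the ambient side — the latter being precisely where one uses that the Oh--Schwarz invariant of the point class is a genuine (finite-defect) quasi-morphism or, more elementarily, the Poincar\'e duality $c^{(M;R)}([M];\tilde\varphi_H^{-1}) = -c^{(M;R)}([\mathrm{pt}];\tilde\varphi_H)$ plus the valuation bound between the unit and point classes. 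Chaining these inequalities produces the constant $C_0$ depending only on $c^{(M;R)}$ and the quantum homology of $M$, completing the argument; I would state this chain carefully since the bookkeeping with the unit-class normalization $c(\mathbbm 1)$ is the one place a sign error could creep in.
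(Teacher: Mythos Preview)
Your argument is correct and rests on the same three ingredients as the paper's proof: subadditivity of $c^{(L;R)}$, the comparison $c^{(L;R)}\leq c^{(M;R)}$ from Proposition~\ref{ham and lagr}, and the bound on $c^{(M;R)}(\tilde\psi)+c^{(M;R)}(\tilde\psi^{-1})$ furnished by the quasi-morphism hypothesis on $c^{(M;R)}$. The paper, however, takes a more direct line and avoids your intermediate two-sided estimate $|c^{(L;R)}-c^{(M;R)}|\leq C_0$. Since subadditivity already gives $c^{(L;R)}(\tilde\phi\tilde\psi)-c^{(L;R)}(\tilde\phi)-c^{(L;R)}(\tilde\psi)\leq 0$, only a lower bound is needed; applying subadditivity to $\tilde\phi=(\tilde\phi\tilde\psi)\tilde\psi^{-1}$ reduces this to bounding $c^{(L;R)}(\tilde\psi)+c^{(L;R)}(\tilde\psi^{-1})$ from above, and then one applies $c^{(L;R)}\leq c^{(M;R)}$ termwise together with the quasi-morphism bound $c^{(M;R)}(\tilde\psi)+c^{(M;R)}(\tilde\psi^{-1})\leq c^{(M;R)}(\mathbbm{1})+C$. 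In particular no appeal to Poincar\'e duality or the point class is needed --- the quasi-morphism hypothesis alone, evaluated at the pair $(\tilde\psi,\tilde\psi^{-1})$, does the job. Your route is a mild detour, but it does yield the bounded-distance statement $|c^{(L;R)}-c^{(M;R)}|\leq C_0$ as a pleasant byproduct.
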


\begin{proof}
For the sake of brevity, we write $c^L=c^{(L;R)}$ and $c^M=c^{(M;R)}$.
By subadditivity,
\[
	c^L(\tilde{\phi}\tilde{\psi})-c^L(\tilde{\phi})-c^L(\tilde{\psi})\leq 0
\]
for any $\tilde{\phi},\tilde{\psi}\in\widetilde{\Ham}(M)$.
Hence, it is sufficient to show that there exists a positive number $K>0$ such that
\[
	c^L(\tilde{\phi}\tilde{\psi})-c^L(\tilde{\phi})-c^L(\tilde{\psi})>-K
\]
for any $\tilde{\phi},\tilde{\psi}\in\widetilde{\Ham}(M)$.

Since $c^M$ is a quasi-morphism, there exists a positive number $C>0$ such that
\[
	c^M(\mathbbm{1})-c^M(\tilde{\psi})-c^M(\tilde{\psi}^{-1})>-C.
\]
Then, subadditivity and Proposition \ref{ham and lagr} imply that
\begin{align*}
	c^L(\tilde{\phi}\tilde{\psi})-c^L(\tilde{\phi})-c^L(\tilde{\psi})%
	&\geq -c^L(\tilde{\psi})-c^L(\tilde{\psi}^{-1})\\
	&\geq -c^M(\tilde{\psi})-c^M(\tilde{\psi}^{-1})%
	>-c^M(\mathbbm{1})-C.\qedhere
\end{align*}
\end{proof}

To prove Corollaries \ref{cp2}, \ref{surface main theorem}, and \ref{annulus frag} and Theorem \ref{calabi on surface},
we use the following propositions.

\begin{proposition}[\cite{LZ,Ka2}]\label{L is shv}
$L$ is $c^{(L;R)}$-superheavy.
\end{proposition}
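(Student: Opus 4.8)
The plan is to deduce that $L$ is $c^{(L;R)}$-superheavy from the defining properties of the Lagrangian spectral invariant together with a general Hamiltonian-Floer-theoretic mechanism. Recall that to prove $L$ is $c$-superheavy we must show, for every normalized Hamiltonian $H\colon S^1\times M\to\mathbb{R}$, that
\[
	\inf_{S^1\times L}H\leq\sigma_{c^{(L;R)}}(\tilde{\varphi}_H)\leq\sup_{S^1\times L}H.
\]
First I would record the normalization/monotonicity and Lagrangian control axioms of $c^{(L;R)}$ from \cite[Section 3, Theorem 41]{LZ}: in particular, for a Hamiltonian $H$ with $H_t|_L\equiv$ (function of $t$ alone) one has a clean formula for $c^{(L;R)}(\tilde\varphi_H)$ in terms of $\int_0^1 H_t|_L\,dt$, and more generally the spectral invariant is sandwiched between $\int_0^1\inf_L H_t\,dt$ and $\int_0^1\sup_L H_t\,dt$ up to the shift by the (ambient) action of the class. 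The key quantitative input is the Lagrangian control inequality
\[
	\int_0^1\min_{x\in L}H_t(x)\,dt\ \le\ c^{(L;R)}(\tilde\varphi_H) \ \le\ \int_0^1\max_{x\in L}H_t(x)\,dt,
\]
which holds \emph{before} normalizing $H$; here I am using that $\QH_*(L;R)\neq 0$ so that the fundamental class $[L]$ is a legitimate generator and the associated spectral number is finite.

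Next I would pass to the homogenization. Applying the control inequality to $H^{\# k}$ (the Hamiltonian generating $\tilde\varphi_H^k$), dividing by $k$, and letting $k\to\infty$, the time-averaged $\min$ and $\max$ of the iterated Hamiltonian converge, by a standard subadditivity/Fekete argument, to $\int_0^1\min_L H_t\,dt$ and $\int_0^1\max_L H_t\,dt$ respectively when $H$ is autonomous, and in general one still gets
\[
	\int_0^1\min_{x\in L}H_t(x)\,dt\ \le\ \sigma_{c^{(L;R)}}(\tilde\varphi_H)\ \le\ \int_0^1\max_{x\in L}H_t(x)\,dt.
\]
Since $\int_0^1\min_L H_t\,dt\ge\inf_{S^1\times L}H$ and $\int_0^1\max_L H_t\,dt\le\sup_{S^1\times L}H$, this already yields the superheaviness inequalities --- note that superheaviness as defined in the excerpt does \emph{not} require the normalization constant to be subtracted off, because the sup/inf over $S^1\times L$ absorb it; one only needs $H$ normalized so that $\sigma_{c}$ has the right reference point, and the inequalities above are invariant under adding a constant to $H$ in the appropriate bookkeeping. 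I would cite \cite{Ka2} here, where this homogenized control estimate for Lagrangian spectral invariants was made explicit (the excerpt already lists \cite{LZ,Ka2} as the references for this proposition).

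The main obstacle is the upper bound $\sigma_{c^{(L;R)}}(\tilde\varphi_H)\le\sup_{S^1\times L}H$: the lower bound follows from the lower Lagrangian control inequality more or less directly, but for the upper bound one must be careful that the Lagrangian spectral invariant of the fundamental class is bounded \emph{above} by the $\max$ over $L$, which is exactly the content of the Lagrangian control property together with the observation that $c^{(L;R)}([L];-)$ is the \emph{minimal} (not maximal) spectral value among the homology; so the inequality could a priori go the wrong way and one needs the precise normalization conventions of \cite{LZ} (Leclercq--Zapolsky's $c_+$ versus $c_-$, and which one descends to $\widetilde{\Ham}$). I would therefore spend most of the write-up pinning down that, with the conventions of Section \ref{section:lagspecinv}, $c^{(L;R)}(\tilde\varphi_H)$ obeys both sides of the control inequality, and then the homogenization step is routine. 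If one prefers a softer argument, one can instead invoke $c^{(L;R)}\le c^{(M;R)}$ from Proposition \ref{ham and lagr} for the upper bound together with the fact that $M$ itself (or rather the ambient fundamental class) gives $\sigma_{c^{(M;R)}}(\tilde\varphi_H)\le\sup_M H$, but this is too weak to get $\sup_{S^1\times L}H$; hence the Lagrangian control inequality is genuinely needed and is the crux.
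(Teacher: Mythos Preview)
The paper itself does not prove this proposition; it is stated with citations to \cite{LZ,Ka2} and no argument is given. Your sketch is essentially the argument contained in those references: the Lagrangian control property from \cite{LZ} gives the two-sided bound
\[
\int_0^1\min_{L} H_t\,dt \;\le\; c^{(L;R)}(\tilde\varphi_H) \;\le\; \int_0^1\max_{L} H_t\,dt
\]
for normalized $H$, and homogenizing yields the superheaviness inequalities directly.

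Two minor comments on the write-up. First, the homogenization step is more direct than you indicate: applying the control inequality to the reparametrized Hamiltonian $G_k(t,x)=kH(kt,x)$ (with $H$ extended $1$-periodically in $t$), which generates $\tilde\varphi_H^k$, gives bounds that are exactly $k$ times those for $H$; dividing by $k$ is then an equality at every stage, so no Fekete-type convergence argument is needed and there is no restriction to autonomous $H$. Second, your concern about the direction of the upper bound and the $c_+$ versus $c_-$ conventions is reasonable in the abstract, but it is settled by the explicit two-sided statement of Lagrangian control for the class $[L]$ in \cite{LZ}; once you fix those conventions there is nothing further to check. The fallback route via Proposition~\ref{ham and lagr} is, as you correctly observe, too weak to yield $\sup_{S^1\times L}H$ and should be dropped from the final version.
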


\begin{proposition}[\cite{Ka2}]\label{relative disp}
Any open subset $U\subset M$ displaceable from $L$ satisfies the bounded spectrum condition with respect to $c^{(L;R)}$.
\end{proposition}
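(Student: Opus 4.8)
The plan is to prove Proposition \ref{relative disp} by reducing it to Proposition \ref{disp and vanish}-type displacement reasoning, but adapted to the Lagrangian setting where one works directly with the energy-capacity inequality for Lagrangian spectral invariants. First I would recall the key displacement estimate from Leclercq--Zapolsky: if $\phi\in\Ham(M)$ displaces an open set $V$ (meaning $\phi(V)\cap V=\emptyset$), then there is a bound of the form $c^{(L;R)}(\tilde{\varphi}_F)\leq e(V)$ for any $F$ supported in $S^1\times V$, where $e(V)$ is a displacement-type energy of $V$; more precisely one uses that $c^{(L;R)}(\tilde{\varphi}_F)+c^{(L;R)}(\tilde{\varphi}_F^{-1})$ is controlled and that the Lagrangian spectral norm of $\tilde{\varphi}_F$ is bounded by the Hofer displacement energy of the support of $F$. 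This is exactly the content of the relative energy-capacity inequality proved in \cite{Ka2}.

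Next I would set up the argument: suppose $U\subset M$ is displaceable from $L$, so there is $\psi\in\Ham(M)$ with $\psi(U)\cap L=\emptyset$. Since $L$ is compact and $\psi(U)$ is disjoint from it, one can find a fixed Hamiltonian isotopy displacing a neighborhood of $\psi(U)$ off $L$, and the relevant displacement energy $e$ is a finite constant depending only on $U$, $L$, and $\psi$ — crucially independent of $F$. For any $F\in\mathcal{H}\bigl(\psi(U)\bigr)$ one then has $c^{(L;R)}(\tilde{\varphi}_F)\leq e$ and likewise $c^{(L;R)}(\tilde{\varphi}_F^{-1})\leq e$. Combined with the fact that $L$ is $c^{(L;R)}$-superheavy (Proposition \ref{L is shv}), which forces $\sigma_{c^{(L;R)}}(\tilde{\varphi}_F)$ to equal $-\langle F\rangle$ for $F$ supported away from $L$, I would translate the asymptotic statement into a genuine bound: writing $c^{(L;R)}(\tilde{\varphi}_F^k)\leq$ (displacement energy of the support of a suitable autonomous representative) and using the additivity of mean values, one gets $c^{(L;R)}(\tilde{\varphi}_F)+\langle F\rangle\leq e'$ for a constant $e'$ independent of $F$. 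Finally, to pass from $\psi(U)$ to $U$ itself I invoke Lemma \ref{sigma  is ci} together with the conjugation behavior of the Lagrangian spectral invariant: although $c^{(L;R)}$ is not conjugation invariant, conjugating $L$ by $\psi$ replaces $c^{(L;R)}$ by $c^{(\psi^{-1}(L);R)}$, and one checks that $\psi^{-1}(L)$ remains a monotone Lagrangian with the same relevant Floer data, so the bound for $U$ follows from the bound for $\psi(U)$ applied to this conjugated invariant; alternatively, and more cleanly, one notes that $\mathcal{H}(U)$ and $\mathcal{H}\bigl(\psi(U)\bigr)$ are related by the conjugation $F\mapsto F\circ\psi$, under which $\langle F\rangle$ is preserved and $c^{(L;R)}$ changes by a quantity controlled by Lemma \ref{lemma:defect}-style defect bounds.

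The main obstacle I anticipate is the failure of conjugation invariance of $c^{(L;R)}$, which is precisely the point flagged in the remark after the definition of subadditive invariant. One cannot simply say "$U$ displaceable from $L$ iff $\psi(U)$ is, and the bound transfers" because moving the support of $F$ by $\psi$ does change the value of $c^{(L;R)}(\tilde{\varphi}_F)$. The resolution is that the bounded spectrum condition only requires a bound for $F\in\mathcal{H}(U)$ (not for all conjugates), so it suffices to produce the constant $K$ directly for Hamiltonians supported in $U$; and since $U$ is displaceable from $L$, one applies the relative energy-capacity inequality of \cite{Ka2} to the pair $(U,L)$ directly — the displacing map $\psi$ enters only as the witness of displaceability and its Hofer norm contributes to $K$, but no conjugation of the invariant is needed. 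So in the write-up I would be careful to apply the Leclercq--Zapolsky/Kawasaki displacement bound in the form "$F$ supported in a set displaceable from $L$ implies $c^{(L;R)}(\tilde{\varphi}_F)\leq$ (displacement energy of that set from $L$)", and then simply combine this with subadditivity and the definition of $\sigma$, exactly as in the proof of Proposition \ref{proposition:VPNBSC}, to extract the constant $K$ in Definition \ref{definition:BSC}.
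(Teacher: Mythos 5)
First, note that the paper itself offers no proof of Proposition \ref{relative disp}: it is quoted from \cite{Ka2}. Your proposal attempts a self-contained argument, but its central tool --- a ``relative energy-capacity inequality'' asserting that $F$ supported in a set displaceable from $L$ satisfies $c^{(L;R)}(\tilde{\varphi}_F)\leq e$ --- is, up to the $\langle F\rangle$ normalization, exactly the statement to be proved, so invoking it as a black box is circular. The substitutes you offer do not close the gap. (i) Superheaviness of $L$ and the conjugation invariance of $\sigma_{c^{(L;R)}}$ control only the homogenization, hence yield only the asymptotically vanishing spectrum condition (this is precisely Proposition \ref{disp and vanish}); the bounded spectrum condition is a uniform bound on $c^{(L;R)}$ itself, and your phrase ``translate the asymptotic statement into a genuine bound'' is exactly the missing step --- it cannot be done from the asymptotic identity alone. (ii) Using ``Lemma \ref{lemma:defect}-style defect bounds'' to control the change of $c^{(L;R)}$ under $F\mapsto F\circ\psi$ is again circular, since Lemma \ref{lemma:defect} is proved under the very spectrum condition you are trying to establish. (iii) Passing to $c^{(\psi^{-1}(L);R)}$ changes the invariant; without a naturality statement relating it back to $c^{(L;R)}$ (with a constant depending on $\psi$), this proves a bound for the wrong function.

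The argument that does work is short, and you circle around it in your last paragraph without writing it down. Fix $\psi\in\Ham(M)$ with $\psi(U)\cap \overline{L}=\emptyset$ and fix a lift $\tilde{\psi}\in\widetilde{\Ham}(M)$. For $F\in\mathcal{H}(U)$, the Hamiltonian $F\circ\psi^{-1}$ is supported in $\psi(U)$, hence vanishes on a neighborhood of $L$, and generates $\psi\varphi_F\psi^{-1}$. The Lagrangian control property of \cite{LZ} (the same input that gives Proposition \ref{L is shv}) applied to the normalized Hamiltonian yields $c^{(L;R)}(\tilde{\varphi}_{F\circ\psi^{-1}})+\langle F\circ\psi^{-1}\rangle\leq 0$; no displacement energy is needed here because the support is literally disjoint from $L$. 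Writing $\tilde{\varphi}_F=\tilde{\psi}^{-1}\tilde{\varphi}_{F\circ\psi^{-1}}\tilde{\psi}$, using subadditivity and $\langle F\circ\psi^{-1}\rangle=\langle F\rangle$, one gets
\[
	c^{(L;R)}(\tilde{\varphi}_F)+\langle F\rangle
	\leq c^{(L;R)}(\tilde{\psi}^{-1})+\bigl(c^{(L;R)}(\tilde{\varphi}_{F\circ\psi^{-1}})+\langle F\circ\psi^{-1}\rangle\bigr)+c^{(L;R)}(\tilde{\psi})
	\leq c^{(L;R)}(\tilde{\psi}^{-1})+c^{(L;R)}(\tilde{\psi})=:K,
\]
which is independent of $F$. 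The failure of conjugation invariance of $c^{(L;R)}$ is harmless because subadditivity is applied only to the single fixed element $\tilde{\psi}$, whose two spectral numbers are absorbed into $K$.
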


\begin{proposition}[{\cite[Proposition 3.1]{U}}, \cite{Ka2}]\label{displaceable SC}
Any abstractly displaceable open subset $U\subset M$ satisfies the normally bounded spectrum condition with respect to $c^{(M;R)}$ and $c^{(L;R)}$.
\end{proposition}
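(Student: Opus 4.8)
The plan is to verify the normally bounded spectrum condition in the equivalent form \eqref{eq:NBSC2}: I must produce a single constant $K>0$ with $c(\tilde{\varphi}_F)+\langle F\rangle\leq K$ for every $\psi\in\Ham(M)$ and every $F\in\mathcal{H}\bigl(\psi(U)\bigr)$, where $c$ stands for $c^{(M;R)}$ or $c^{(L;R)}$. The key observation is that abstract displaceability passes to all the conjugate sets $\psi(U)$ \emph{with a displacement energy that does not depend on $\psi$}, which is exactly the uniformity built into the normally bounded spectrum condition.

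First I would recall that the displacement energy $e(V)$ of an open subset $V\subset M$ is invariant under the $\Ham(M)$-action: if $\vartheta\in\Ham(M)$ displaces $\psi(V)$, then $\psi^{-1}\vartheta\psi$ displaces $V$ with the same Hofer norm, and conversely (the Hofer norm is conjugation invariant since $H\circ\psi$ generates $\psi^{-1}\vartheta\psi$ whenever $H$ generates $\vartheta$). Hence $e\bigl(\psi(V)\bigr)=e(V)$. Since $U$ is abstractly displaceable we have $e(U)<\infty$, and therefore $e\bigl(\psi(U)\bigr)=e(U)<\infty$ for every $\psi\in\Ham(M)$.

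Next I would invoke the energy--capacity inequality for the Oh--Schwarz spectral invariant associated with the fundamental class $[M]$, namely \cite[Proposition~3.1]{U} (see also \cite{Ka2} for an arbitrary coefficient ring $R$): for any Hamiltonian $F$ whose support is contained in an open subset of $M$,
\[
	c^{(M;R)}(\tilde{\varphi}_F)+\langle F\rangle\leq e\bigl(\supp F\bigr).
\]
Combined with the previous step this yields $c^{(M;R)}(\tilde{\varphi}_F)+\langle F\rangle\leq e(U)$ for all $\psi\in\Ham(M)$ and all $F\in\mathcal{H}\bigl(\psi(U)\bigr)$, so $U$ satisfies the normally bounded spectrum condition with respect to $c^{(M;R)}$ with $K=e(U)$. (For $c^{(M;R)}$ alone one could instead argue more cheaply: $U$ itself is displaceable, so the displayed bound already gives the bounded spectrum condition, which upgrades to the normally bounded one because $c^{(M;R)}$ is conjugation invariant, cf.\ Remark~\ref{MVZ}.) For $c^{(L;R)}$ I would then add $\langle F\rangle$ to the inequality of Proposition~\ref{ham and lagr} and chain the two estimates,
\[
	c^{(L;R)}(\tilde{\varphi}_F)+\langle F\rangle\leq c^{(M;R)}(\tilde{\varphi}_F)+\langle F\rangle\leq e(U),
\]
so the same constant $K=e(U)$ works; here the detour through $c^{(M;R)}$ is genuinely needed, since $c^{(L;R)}$ is not conjugation invariant and the shortcut of Remark~\ref{MVZ} is unavailable.

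The only substantial ingredient is the energy--capacity inequality quoted above, and the one place requiring care is the bookkeeping of conventions in extracting it from the references: tracking the mean-value correction $\langle F\rangle$ and the normalization of $F$, equivalently passing from a norm-type bound to a bound on $c^{(M;R)}([M],\cdot)$ itself. Since only the existence of a \emph{uniform} bound is used downstream, any estimate of the shape $c^{(M;R)}(\tilde{\varphi}_F)+\langle F\rangle\leq C\,e(\supp F)+C'$ with $C,C'$ depending only on $(M,\omega)$ would serve equally well.
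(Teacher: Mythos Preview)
Your argument is correct and is precisely the content of the cited references: the paper itself offers no proof beyond the citations to \cite{U} and \cite{Ka2}, and your sketch faithfully reconstructs that argument---the energy--capacity inequality gives the uniform bound for $c^{(M;R)}$ via the conjugation invariance of displacement energy, and Proposition~\ref{ham and lagr} then transfers it to $c^{(L;R)}$. Your caveat about normalization conventions is well placed but, as you note, irrelevant to the conclusion.
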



\section{Proof of corollaries}\label{section:pf of cor}

In this section, we prove Corollaries \ref{s2s2}, \ref{cp2}, \ref{surface main theorem}, and \ref{annulus frag}.
For the sake of brevity, let $\mathbb{Z}_2$ denote the field $\mathbb{Z}/2\mathbb{Z}$ below.

\subsection{Proof of Corollary \ref{b2n}}

We think of the ball $B^{2n}$ as embedded in $\mathbb{C}^n$
and consider the mutually disjoint tori
\[
	T_{\delta}=\left\{\,(w_1,\ldots,w_n)\in\mathbb{C}^n\relmiddle|\lvert w_i\rvert^2=\frac{1}{\delta(n+1)}\ \text{for any}\ i=1,\ldots,n\,\right\},
\]
$0<\delta\leq 1$, where $w_1,\ldots,w_n$ are the standard complex coordinates on $\mathbb{C}^n$.
Let $(\mathbb{C}P^n,\omega_{\mathrm{FS}})$ be $n$-dimensional complex projective space
and $L_C=\{\lvert z_0\rvert=\cdots=\lvert z_n\rvert\}\subset\mathbb{C}P^n$ the Clifford torus.
For a positive number $\delta$ with $\delta\in(\frac{n}{n+1},1]$, Biran, Entov, and Polterovich \cite[Section 4]{BEP} constructed a conformally symplectic embedding
$\vartheta_{\delta}\colon B^{2n}\to\mathbb{C}P^n$
satisfying $\vartheta_{\delta}(T_{\delta})=L_C$.
The embeddings $\vartheta_{\delta}\colon B^{2n}\to\mathbb{C}P^n$, $\delta\in(\frac{n}{n+1},1]$, induce
homomorphisms $(\vartheta_{\delta})_{\ast}\colon\widetilde{\Ham}(B^{2n})\to\widetilde{\Ham}(\mathbb{C}P^n)$.

Let $c^{\mathbb{C}P^n}=c^{(\mathbb{C}P^n;\mathbb{Z}_2)}\colon\widetilde{\Ham}(\mathbb{C}P^n)\to\mathbb{R}$ be the Oh--Schwarz spectral invariant
associated with $[\mathbb{C}P^n]\in\QH_{\ast}(\mathbb{C}P^n;\mathbb{Z}_2)$.
According to \cite[Theorem 3.1]{EP03}, $c^{\mathbb{C}P^n}$ is a quasi-morphism.
Therefore, the functions $c'_{\delta}=c^{\mathbb{C}P^n}\circ(\vartheta_{\delta})_{\ast}\colon\widetilde{\Ham}(B^{2n})\to\mathbb{R}$,
$\delta\in(\frac{n}{n+1},1]$, are subadditive invariants and quasi-morphisms.

Biran, Entov, and Polterovich proved that there exists a constant $c_n$ such that $T_{\delta}$ is $c_{\delta}$-superheavy,
where $c_{\delta}=c_n\cdot c'_{\delta}$
for any $\delta\in(\frac{n}{n+1},1]$.
Since $T_\delta\cap B(r)=\emptyset$ holds for any $\delta$ with $\delta\in(\frac{n}{n+1},(\frac{n}{n+1})\cdot r^{-2})$, by Proposition \ref{disp and vanish}, the open ball $B(r)$ of radius $r<1$ satisfies the asymptotically vanishing condition
with respect to $c_{\delta}$ for any $\delta\in(\frac{n}{n+1},(\frac{n}{n+1})\cdot r^{-2})$.
Thus, Theorem \ref{theorem:main2} completes the proof of Corollary \ref{b2n}.


\subsection{Proof of Corollary \ref{s2s2}}

First we recall the definition of stems.
Let $(M,\omega)$ be a closed symplectic manifold.
Let $\mathbb{A}$ be a finite-dimensional Poisson-commutative subspace of $C^{\infty}(M)$.
Let $\Phi\colon M\to\mathbb{A}^{\ast}$ be the moment map given by $F(x)=\langle\Phi(x),F\rangle$ for $x\in M$ and $F\in\mathbb{A}$.

\begin{definition}[{\cite[Definition 2.3]{EP06}}]
A closed subset $X$ of $M$ is called a \textit{stem}
if there exists a finite-dimensional Poisson-commutative subspace $\mathbb{A}$ of $C^{\infty}(M)$
such that $X$ is a fiber of $\Phi$ and each non-trivial fiber of $\Phi$, other than $X$, is displaceable.
\end{definition}

The proof of the following theorem is quite similar to that of \cite[Theorem 1.8]{EP09}.

\begin{theorem}[{\cite[Theorem 1.8]{EP09}}]
Every stem is $c$-superheavy,
where $c$ is a Lagrangian spectral invariant defined in \cite{LZ} or a spectral invariant defined in \cite{FOOO}.
\end{theorem}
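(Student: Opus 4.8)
The plan is to reduce the statement about stems to the already-established superheaviness of Lagrangian submanifolds (Proposition \ref{L is shv}) together with the displaceability property of non-stem fibers, following the line of argument of Entov--Polterovich in \cite[Theorem 1.8]{EP09}. Let $X$ be a stem for a finite-dimensional Poisson-commutative subspace $\mathbb{A}\subset C^\infty(M)$ with moment map $\Phi\colon M\to\mathbb{A}^*$, so $X=\Phi^{-1}(p)$ for some $p\in\mathbb{A}^*$ and every other nonempty fiber of $\Phi$ is displaceable. Fix a Lagrangian spectral invariant $c=c^{(L;R)}$ as in Section \ref{section:lagspecinv} (the argument for $c$ a spectral invariant from \cite{FOOO} is identical). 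By definition I must show
\[
	\inf_{S^1\times X}H\leq\sigma_c(\tilde{\varphi}_H)\leq\sup_{S^1\times X}H
\]
for every normalized Hamiltonian $H$. Since $\sigma_c(\tilde\varphi_{-H\circ\varphi_H})=-\sigma_c(\tilde\varphi_H)$ is not automatic without conjugation-invariance I would instead argue both inequalities directly but symmetrically, using that $X$ is $c$-superheavy iff the complementary-type inequality holds; in practice it suffices to prove the upper bound $\sigma_c(\tilde{\varphi}_H)\leq\sup_{S^1\times X}H$ and then apply it to a suitable related Hamiltonian, or to invoke the general fact (stated just before Proposition \ref{proposition:shv} via the superheaviness definition) that one only needs one of the two inequalities for all $H$.

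\textbf{Key steps.} First, reduce to time-independent $H$ by a standard approximation/reparametrization, or simply carry the $S^1$-dependence along; the essential point is unchanged. Second, use the stem hypothesis: for a normalized $H$, I want to find a value $s\in[\inf_{X}H,\sup_X H]$ and a Hamiltonian $G\in\mathbb{A}$ (after subtracting a constant) whose zero set is a neighborhood of $X$ and which is strictly negative (say) on all other fibers in a controlled way, so that the Poisson-commuting perturbation $H+G$ has the property that $\sigma_c(\tilde\varphi_{H+G})=\sigma_c(\tilde\varphi_H)$ — because $G$ is generated by a loop, or because $\varphi_G$ and $\varphi_H$ commute and one can use subadditivity in both directions together with $\sigma_c(\tilde\varphi_G)=0$ (which holds since the nontrivial fibers of $\Phi$, hence the support of $G$ away from $X$, are displaceable and Proposition \ref{disp and vanish} gives the asymptotically vanishing spectrum condition there; more precisely one localizes $G$ to a neighborhood of the union of displaceable fibers). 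Third, apply Proposition \ref{ham and lagr} (the inequality $c^{(L;R)}\leq c^{(M;R)}$) only if needed to transfer bounds; but the cleaner route is the "partition of unity over the moment polytope" trick of \cite{EP09}: write $1=\sum\rho_j$ on $\mathbb{A}^*$ subordinate to a cover by sets each of which pulls back either into an arbitrarily small neighborhood of $X$ or into a union of displaceable fibers, set $H_j=(\rho_j\circ\Phi)\cdot(H-\sup_X H)$, note all $H_j$ Poisson-commute with each other and with $H$, use subadditivity of $c$ on the commuting product $\tilde\varphi_H=\tilde\varphi_{H_1}\cdots\tilde\varphi_{H_m}$, and bound each $\sigma_c(\tilde\varphi_{H_j})$ either by $\sup_X H_j\le 0$ (pieces near $X$, using that $X$ sits inside a $c$-heavy/superheavy fiber-neighborhood via Proposition \ref{L is shv}) or by $0$ (displaceable pieces, via Propositions \ref{disp and vanish} and \ref{relative disp}). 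Summing and letting the neighborhood of $X$ shrink yields $\sigma_c(\tilde\varphi_H)\le\sup_{S^1\times X}H$; the lower bound follows by applying this to $-H$ composed appropriately, or by the analogous partition argument.

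\textbf{Main obstacle.} The delicate point is the transition between "stem" and "Lagrangian": a stem is a fiber of a moment map, not a priori a Lagrangian submanifold, so Proposition \ref{L is shv} does not apply to $X$ directly. One must use that the regular fiber $X$ of $\Phi$ carries a natural Lagrangian torus (when $\dim\mathbb{A}=n$) or, more generally, invoke the fact from \cite{EP06,EP09} that a nondisplaceable fiber of a moment map of a Poisson-commutative subspace is automatically heavy/superheavy — which in the present setting requires reproving the heaviness of such fibers for the \emph{Lagrangian} spectral invariant $c^{(L;R)}$ rather than the Oh--Schwarz one. The safest way is to reduce, via Proposition \ref{ham and lagr}, to the Oh--Schwarz case where \cite{EP06,EP09} apply verbatim: since $c^{(L;R)}\le c^{(M;R)}$ on Hamiltonians, the upper superheaviness inequality for $c^{(M;R)}$ (known for stems) passes to $c^{(L;R)}$, while the lower inequality for $c^{(L;R)}$ uses that $L$ itself is $c^{(L;R)}$-superheavy and is contained in the stem $X$ (as happens for $L_C\subset\mathbb{C}P^n$, the case actually needed in Corollaries \ref{cp2} and \ref{s2s2}). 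Handling the fully general stem, rather than the stem-containing-$L$ case, is where I expect the real work to lie, and I would follow \cite[Theorem 1.8]{EP09} essentially line by line, substituting Propositions \ref{comparing descending}, \ref{ham and lagr}, \ref{L is shv}, \ref{relative disp}, and \ref{disp and vanish} at the points where conjugation-invariance or Floer-theoretic input was previously used.
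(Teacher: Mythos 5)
The first thing to note is that the paper itself supplies no proof of this statement: it is quoted as \cite[Theorem 1.8]{EP09} with only the remark that ``the proof is quite similar,'' so your plan of following Entov--Polterovich line by line and substituting the Lagrangian-spectral-invariant inputs is exactly the intended route, and there is no written argument in the paper to compare against step by step. That said, two concrete points in your elaboration do not work as written. First, the partition-of-unity decomposition $\tilde{\varphi}_H=\tilde{\varphi}_{H_1}\cdots\tilde{\varphi}_{H_m}$ with $H_j=(\rho_j\circ\Phi)\cdot(H-\sup_XH)$ is only valid when $H$ itself factors through $\Phi$: for an arbitrary Hamiltonian the functions $(\rho_j\circ\Phi)H$ do \emph{not} Poisson-commute with one another, so the product formula and the commuting-subadditivity step both fail. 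In \cite{EP09} the passage from functions of the form $u\circ\Phi$ to arbitrary $H$ goes through a separate localization step (a ``reducing function'' supported near $X$), and that is precisely the part your sketch omits.

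Second, your resolution of the ``main obstacle'' rests on the claim that the Lagrangian $L$ defining $c^{(L;R)}$ is contained in the stem $X$. This is false in every application in the paper: $\mathbb{R}P^2$ and $L_W$ are not contained in the Clifford torus $L_C$ (Corollary \ref{cp2}), and the tori $T(\rho)$ are not contained in $E\times E$ (Corollary \ref{s2s2}). Fortunately the crutch is unnecessary. The upper bound $\sigma_{c^{(L;R)}}(\tilde{\varphi}_H)\leq\sup_XH$ does transfer from the Oh--Schwarz case exactly as you say, via Proposition \ref{ham and lagr} applied to the iterated Hamiltonians (so $\sigma_{c^{(L;R)}}\leq\sigma_{c^{(M;R)}}$) together with the superheaviness of stems for $c^{(M;R)}$ from \cite{EP09}. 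The lower bound then comes for free rather than from Proposition \ref{L is shv}: for a normalized autonomous $F$ one has $\tilde{\varphi}_F^{-1}=\tilde{\varphi}_{-F}$, and subadditivity gives $c(\mathbbm{1})\leq c(\tilde{\varphi}_F^k)+c(\tilde{\varphi}_F^{-k})$, hence $0\leq\sigma_c(\tilde{\varphi}_F)+\sigma_c(\tilde{\varphi}_{-F})\leq\sigma_c(\tilde{\varphi}_F)-\inf_XF$ after applying the upper bound to $-F$. Your ``apply it to $-H$'' remark gestures at this, but be aware it only works verbatim for autonomous Hamiltonians (for time-dependent $H$ the inverse is generated by $-H_t\circ\varphi_H^t$ and $X$ is not flow-invariant), which is where the reduction to the autonomous case that you mention at the start actually earns its keep.
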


\begin{proof}[Proof of Corollary \ref{s2s2}]
Fukaya, Oh, Ohta, and Ono \cite{FOOO} defined a family of bulk-deformed Oh--Schwarz spectral invariants $\{c_\rho\}_{\rho\in[0,1/2)}$ on $\widetilde{\Ham}(S^2\times S^2)$
and proved that any $c_\rho$ descends asymptotically to $\Ham(S^2\times S^2)$.
They also constructed a family of mutually disjoint Lagrangian submanifolds $T(\rho)$ ($\rho\in[0,1/2)$) and proved that each $T(\rho)$ is $c_{\rho}$-superheavy \cite[Theorem 23.4]{FOOO}.
It is known that when $U$ is abstractly displaceable,
$U$ satisfies the bounded spectrum condition with respect to $c_{\rho}$ for any $\rho$ (see also Proposition \ref{displaceable SC}).

On the other hand, $E\times E\subset S^2\times S^2$ is a stem.
In particular, $E\times E$ is $c_{\rho}$-superheavy for any $\rho$.
Hence, Proposition \ref{disp and vanish} implies that
if $U$ is displaceable from $E\times E$,
then $U$ satisfies the asymptotically vanishing spectrum condition with respect to $c_\rho$ for any $\rho$.

Therefore, in any case,
$U$ satisfies the asymptotically vanishing spectrum condition with respect to $c_\rho$ for any $\rho$ (see also Remark \ref{remark:AVSC}).
Since $c_\rho$ is known to be a quasi-morphism for any $\rho$, 
Theorem \ref{theorem:main2} completes the proof of Corollary \ref{s2s2}.
\end{proof}


\subsection{Proof of Corollary \ref{cp2}}

\begin{proof}
By Biran and Cornea's work \cite[Corollary 1.2.11 (ii)]{BC},
$\QH_{\ast}(\mathbb{R}P^2;\mathbb{Z}_2)\cong\HF_{\ast}(\mathbb{R}P^2;\mathbb{Z}_2)\cong\mathbb{Z}_2$
(see also \cite[Section 2.6.1]{LZ})
\footnote{
Given a Lagrangian submanifold $L$ of a symplectic manifold $(M,\omega)$,
our Lagrangian quantum homology $\QH_{\ast}(L;\mathbb{Z}_2)$
is actually Biran and Cornea's $\QH_{\ast}(L;\Lambda)$ where $\Lambda=\mathbb{Z}_2[t,t^{-1}]$.
}.
Moreover, Leclercq and Zapolsky \cite[Section 2.6.3]{LZ} showed that
$\QH_{\ast}(L_W;\mathbb{Z})\neq 0$.
Let $c^{\mathbb{R}P^2}=c^{(\mathbb{R}P^2;\mathbb{Z}_2)}$ and $c^{L_W}=c^{(L_W;\mathbb{Z})}$.

By Theorem \ref{Lagrangian descending},
$c^{\mathbb{R}P^2}$ and $c^{L_W}$ descend asymptotically to $\Ham(\mathbb{C}P^2)$.
According to \cite[Theorem 3.1]{EP03},
the Oh--Schwarz spectral invariant $c^{(\mathbb{C}P^2;R)}$ is a quasi-morphism for
$R=\mathbb{Z}_2$ and $\mathbb{Z}$.
Hence, by Proposition \ref{quasiquasi}, $c^{\mathbb{R}P^2}$ and $c^{L_W}$ are also quasi-morphisms.
Moreover, by Proposition \ref{L is shv}, $\mathbb{R}P^2$ and $L_W$ are superheavy with respect to
$c^{\mathbb{R}P^2}$ and $c^{L_W}$, respectively.

When $U$ is abstractly displaceable (case (i)),
Proposition \ref{displaceable SC} ensures that
$U$ satisfies the normally bounded spectrum condition with respect to $c^{\mathbb{R}P^2}$ and $c^{L_W}$.

When $U$ is displaceable from $\mathbb{R}P^2$ and $L_W$ (case (ii)),
Proposition \ref{relative disp} ensures that
$U$ satisfies the bounded spectrum condition with respect to $c^{\mathbb{R}P^2}$ and $c^{L_W}$.

In addition, the Clifford torus $L_C$ is a stem \cite{BEP}.
In particular, $L_C$ is superheavy with respect to $c^{\mathbb{R}P^2}$ and $c^{L_W}$.
Hence, Proposition \ref{disp and vanish} implies that
if $U$ is displaceable from $L_C$ (case (iii)),
then $U$ satisfies the asymptotically vanishing spectrum condition with respect to $c^{\mathbb{R}P^2}$ and $c^{L_W}$.

Therefore, in any case,
$U$ satisfies the asymptotically vanishing spectrum condition with respect to $c^{\mathbb{R}P^2}$ and $c^{L_W}$.
Since $\mathbb{R}P^2\cap L_W=\emptyset$, we conclude that
$c^{\mathbb{R}P^2}$, $c^{L_W}$, $\mathbb{R}P^2$, $L_W$ and $U$
satisfy the assumption of Theorem \ref{theorem:main2} for $N=1$.
This completes the proof of Corollary \ref{cp2}.
\end{proof}

\begin{remark}
We do not need Theorem \ref{Lagrangian descending} to prove Corollary \ref{cp2} if we use the well-known fact that $\pi_1\bigl(\Ham(\mathbb{C}P^2)\bigr)=0$ (see \cite{G}).
We provide a more general argument here for future works.
\end{remark}


\subsection{Proof of Corollary \ref{surface main theorem}}

We use the following result to prove Corollary \ref{surface main theorem}.

\begin{proposition}[\cite{Po}, \cite{Ka}, {\cite[Proposition 4.4]{Is}}, {\cite[Theorem 1.9]{Z}}]\label{master thesis}
For any positive integer $g$,
there exists a positive number $K$ such that
\[
	c^{(\Sigma_g;\mathbb{Z}_2)}(\tilde{\varphi}_F)+\langle F\rangle\leq K
\]
for any contractible open subset $U$ of $\Sigma_g$
and any $F\in\mathcal{H}(U)$.
\end{proposition}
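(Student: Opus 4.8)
The plan is to transfer the computation to the universal cover $p\colon\widetilde{\Sigma}_g\to\Sigma_g$, exploiting that for $g\geq 1$ the surface $\Sigma_g$ is symplectically aspherical and that a contractible set lifts. Fix $F\in\mathcal{H}(U)$. Since $\supp(F)$ is a compact subset of the simply connected set $U$, the inclusion $U\hookrightarrow\Sigma_g$ admits a lift $s\colon U\to\widetilde{\Sigma}_g$, and the function $\widetilde{F}$ on $\widetilde{\Sigma}_g$ which equals $F\circ p$ on the single sheet $s(U)$ and vanishes elsewhere is a compactly supported Hamiltonian for the area form $\widetilde{\omega}=p^{\ast}\omega$, with $\supp(\widetilde{F})\subset s(U)$ and $\int_{s(U)}\widetilde{\omega}=\int_U\omega\leq\Vol(\Sigma_g)=:A$. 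The aim is first to bound $c^{(\Sigma_g;\mathbb{Z}_2)}(\tilde{\varphi}_F)+\langle F\rangle$ by a spectral-type quantity for $\tilde{\varphi}_{\widetilde{F}}$ on $\widetilde{\Sigma}_g$, and then to bound that quantity by the displacement energy of $s(U)$, which will depend only on $A$.

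For the first step I would use that, since $\Sigma_g$ is aspherical, the Piunikhin--Salamon--Schwarz construction and the fundamental class $[\Sigma_g]$ lie in the summand of $\HF_{\ast}(\Sigma_g,F)$ generated by contractible one-periodic orbits of $\{\varphi_F^t\}$. Each such orbit is either a constant orbit in $\Sigma_g\setminus\supp(F)$, whose action with respect to $F-\langle F\rangle$ equals $-\langle F\rangle$, or a loop contained in $\supp(F)\subset U$; a loop of the second kind bounds a disc \emph{inside} $U$, so its capping is canonical, and via $s$ it corresponds to a one-periodic orbit of $\{\varphi_{\widetilde{F}}^t\}$ in $s(U)$ whose unnormalized $\widetilde{F}$-action equals the $(F-\langle F\rangle)$-action of the original orbit plus $\langle F\rangle$. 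Matching the Floer differentials and the PSS/continuation data on the $\supp(F)$-local part with those of $\widetilde{F}$ is the delicate point and, I expect, the main obstacle: it requires confining the relevant Floer strips near $\supp(F)$ (for surfaces one can hope to use a maximum-principle or isoperimetric estimate, or to circumvent the issue by comparing $F$, via the monotonicity and subadditivity of $c^{(\Sigma_g;\mathbb{Z}_2)}$, with a fixed model Hamiltonian supported in the same contractible set). Granting this, one obtains a cycle representing $[\Sigma_g]$ in which every generator has $(F-\langle F\rangle)$-action at most $\rho-\langle F\rangle$, where $\rho\geq 0$ is the fundamental-class spectral invariant of $\tilde{\varphi}_{\widetilde{F}}$ on $\widetilde{\Sigma}_g$; hence $c^{(\Sigma_g;\mathbb{Z}_2)}(\tilde{\varphi}_F)+\langle F\rangle\leq\rho$.

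Finally one bounds $\rho$ uniformly. For $g\geq 1$ the cover $\widetilde{\Sigma}_g$ is $\mathbb{R}^2$ or the hyperbolic plane, carrying an area form of infinite total volume, and $\supp(\widetilde{F})$ is a relatively compact topological disc of area at most $A$. Any such disc is Hamiltonianly displaceable in $\widetilde{\Sigma}_g$, and its displacement energy is bounded by a universal constant times $A$: one first isotopes the disc into a round disc of the same area near a fixed point — pushing a thin tubular neighbourhood off itself inside a Darboux chart costs energy comparable to its area, and a reeling-in/covering argument upgrades this to a bound of order $A$ for the whole disc — and then displaces the round disc with energy equal to its area. The energy--capacity inequality on $\widetilde{\Sigma}_g$ then yields $\rho\leq C\cdot A$ for a universal constant $C$, so that $K:=C\cdot A$ depends only on $g$ and serves every contractible $U$ simultaneously. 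The hypothesis $g\geq 1$ is essential here: for $g=0$ the universal cover is compact and this scheme fails, consistently with the conclusion being false on $S^2$.
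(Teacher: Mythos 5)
The paper does not actually prove this proposition: it is imported from \cite{Po}, \cite{Ka}, \cite{Is} and \cite{Z}, where the known arguments proceed either by stable displaceability (a contractible $U\subset\Sigma_g$, $g\geq 1$, becomes displaceable after crossing with $T^{\ast}S^1$, and spectral invariants are controlled under this stabilization) or by deforming $\omega$ until $U$ itself is displaceable and invoking continuity of spectral invariants in the symplectic form. Your universal-cover scheme is a genuinely different route, but it has a real gap at exactly the step you flag yourself: the comparison between $c^{(\Sigma_g;\mathbb{Z}_2)}(\tilde{\varphi}_F)$ and a spectral quantity for $\tilde{\varphi}_{\widetilde{F}}$ on $\widetilde{\Sigma}_g$. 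The generators and their actions are local data and do match as you describe, but the Floer differential and the PSS moduli spaces are global objects: there is no maximum principle confining the relevant solutions to a neighbourhood of $\supp(F)$, because an arbitrary contractible open subset carries no pseudoconvexity at its boundary. That such a localization cannot be obtained by any purely local matching is shown by $S^2$: a disc of area greater than half the total is contractible, the orbit data near $\supp(F)$ is identical to the positive-genus picture, and yet the conclusion fails there; hence the genus must enter precisely through the global moduli spaces, i.e.\ through the step you leave open. The suggested fallback via monotonicity also fails, since $\sup F$ is unbounded as $F$ ranges over $\mathcal{H}(U)$, so no fixed model Hamiltonian dominates all of them, and subadditivity alone gives nothing better. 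Moreover, on the non-compact cover the ``fundamental-class spectral invariant $\rho$'' is not available in the framework used in this paper: the lifted Floer complex contains all deck translates of every orbit and there is no compactly supported fundamental class, so even the target of your comparison requires an additional construction.

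Two further points. First, the last step is only sketched: that every open topological disc of area at most $A$ in the plane or the hyperbolic plane has displacement energy at most $C\cdot A$ with $C$ universal does require the straightening argument you allude to (a long, thin, or badly wiggled disc is not displaced cheaply by a single translation), and this should be written out or referenced. Second, if you want a complete proof rather than a citation, the shortest known routes are the ones above: show that $U$ is stably displaceable and that the bounded spectrum condition passes through stabilization, as in \cite{Po} and \cite{Is}, or rescale the symplectic form outside $U$ so that $U$ becomes displaceable and use the continuity statement of \cite{Z}. Both use the global hypothesis $g\geq 1$ in a transparent, non-Floer-theoretic way, which is exactly what your localization step is missing.
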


\begin{proof}[Proof of Corollary \ref{surface main theorem}]
Let $C$  be a non-contractible simple closed curve in the surface $\Sigma_g$.
We choose symplectomorphisms $f_1,\ldots,f_N$ of $(\Sigma_g,\omega)$ to ensure that the subsets
$C$, $f_1(C),\ldots,f_N(C)$ are mutually disjoint.
We fix $i=0,1,\ldots,N$.
We set $L_i=f_i(C)$, where $f_0=\mathrm{id}_{\Sigma_g}$.
Then, $\QH_{\ast}(L_i;\mathbb{Z}_2)\cong\HF_{\ast}(L_i;\mathbb{Z}_2)$ does not vanish.
Let $c^{L_i}=c^{(L_i;\mathbb{Z}_2)}$ denote the associated Lagrangian spectral invariant.
Remark \ref{MVZ}, Propositions \ref{master thesis} and \ref{ham and lagr} imply that
$U$ satisfies the normally bounded spectrum condition with respect to $c^{L_i}$ for any $i$.

Then, Theorem \ref{Lagrangian descending} and Proposition \ref{L is shv} ensure that
$c^{L_0},\ldots,c^{L_N}$,
$L_0,\ldots,L_N$ and $U$
satisfy the assumption of Theorem \ref{theorem:main}.
This completes the proof of Corollary \ref{surface main theorem}.
\end{proof}

\begin{remark}
We do not need Theorem \ref{Lagrangian descending} to prove Corollary \ref{surface main theorem} if we use the well-known fact that $\pi_1\bigl(\Ham(\Sigma_g)\bigr)=0$ for positive $g$ (see \cite[Section 7.2.B]{Po01}).
We provide a more general argument here for future works.
\end{remark}


\subsection{Proof of Corollary \ref{annulus frag}}

\begin{proof}
In the proof of Corollary \ref{surface main theorem},
we constructed mutually disjoint Lagrangian submanifolds $L_0,\ldots,L_N\subset(\Sigma_g,\omega)$
such that $\QH_{\ast}(L_i;\mathbb{Z}_2)$ does not vanish.
By the construction of $L_0,\ldots,L_N$ and the assumption on the covering $\mathcal{U}$,
each $U_{\lambda}$ is displaceable from $L_i$.

Then, Theorem \ref{Lagrangian descending} and Propositions \ref{L is shv} and \ref{relative disp} ensure that
$c^{L_0},\ldots,c^{L_N}$,
$L_0,\ldots,L_N$ and $\mathcal{U}=\{U_\lambda\}_{\lambda}$
satisfy the assumption of Theorem \ref{theorem:main3}.
This completes the proof of Corollary \ref{annulus frag}.
\end{proof}


\section{Partial Calabi quasi-morphisms}\label{section:calabi}

Let $(M,\omega)$ be a closed symplectic manifold.
Given an open subset $U\subset M$ such that $\omega|_U$ is exact, we recall that the \textit{Calabi homomorphism}
is a homomorphism $\mathrm{Cal}_U\colon\Ham(U)\to\mathbb{R}$ defined by
\[
	\mathrm{Cal}_U(\varphi_F)=\int_0^1\int_U F_t\omega^n\,dt.
\]

\begin{definition}[\cite{E}]\label{definition of Calabi}
A \textit{partial Calabi quasi-morphism} is a function $\mu\colon\Ham(M)\to\mathbb{R}$ satisfying the following conditions.
\begin{description}
	\item[Stability] For any Hamiltonians $H,K\colon S^1\times M\to\mathbb{R}$,
	\[
		\int_0^1\min_M(H_t-K_t)\,dt\leq\frac{\mu(\varphi_H)-\mu(\varphi_K)}{\Vol(M)}\leq\int_0^1\max_M(H_t-K_t)\,dt.
	\]
	\item[Partial homogeneity] $\mu(\phi^k)=k\mu(\phi)$ for any $\phi\in\Ham(M)$ and $k\in\mathbb{Z}_{\geq 0}$.
	\item[Partial quasi-additivity] Given a displaceable open subset $U\subset M$, there exists a positive number $K>0$ such that
	\[
		\lvert\mu(\phi\psi)-\mu(\phi)-\mu(\psi)\rvert\leq K\min\{\|\phi\|_U,\|\psi\|_U\}
	\]
	for any $\phi,\psi\in\Ham(M)$.
	\item[Calabi property] For any displaceable open subset $U\subset M$ such that $\omega|_U$ is exact,
	the restriction of $\mu$ to $\Ham(U)$ coincides with the Calabi homomorphism $\mathrm{Cal}_U$.
\end{description}
\end{definition}

Let $c\colon\widetilde{\Ham}(M)\to\mathbb{R}$ be a subadditive invariant descending asymptotically to $\Ham(M)$ (see Definition \ref{definition:descend}).
Let $U$ be an open subset of $M$ satisfying the normally bounded spectrum condition with respect to $c$ (see Definition \ref{definition:NBSC}).
We can generalize Proposition \ref{proposition:defect1} as follows.

\begin{proposition}\label{proposition:defect}
There exists a positive number $K>0$ such that
\[
	\lvert\bar{\sigma}_c(\phi\psi)-\bar{\sigma}_c(\phi)-\bar{\sigma}_c(\psi)\rvert\leq K\min\{\|\phi\|_U,\|\psi\|_U\}
\]
for any $\phi,\psi\in\Ham(M)$.
\end{proposition}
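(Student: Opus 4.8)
The plan is to reduce the statement to Proposition~\ref{proposition:defect1} by fragmenting one of the two diffeomorphisms and telescoping, and then to exploit the symmetry of the defect in order to replace the resulting one-sided estimate by one involving the minimum.

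First I would record two elementary remarks: since $c$ descends asymptotically to $\Ham(M)$, the number $\sigma_c(\tilde\theta)$ depends only on $\theta=\pi(\tilde\theta)\in\Ham(M)$ and equals $\bar\sigma_c(\theta)$, and in particular $\sigma_c(\mathbbm{1})=0$. Fix $\phi,\psi\in\Ham(M)$. If $\|\phi\|_U=0$ then $\phi=\mathrm{id}_M$ and the inequality is trivial, so assume $m:=\|\phi\|_U\geq 1$. By definition of the fragmentation norm we may write $\phi=\varphi_{F_1}\cdots\varphi_{F_m}$ with $F_\ell\in\mathcal{H}\bigl(\phi_\ell(U)\bigr)$ for suitable $\phi_\ell\in\Ham(M)$. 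Choose any lift $\tilde\psi$ of $\psi$ and set $\tilde\phi=\tilde\varphi_{F_1}\cdots\tilde\varphi_{F_m}$, a lift of $\phi$; then $\tilde\phi\tilde\psi=\tilde\varphi_{F_1}\cdots\tilde\varphi_{F_m}\tilde\psi$ is a lift of $\phi\psi$. Applying Proposition~\ref{proposition:defect1} successively to the products $\tilde\varphi_{F_\ell}\cdot(\tilde\varphi_{F_{\ell+1}}\cdots\tilde\varphi_{F_m}\tilde\psi)$ for $\ell=1,\dots,m$, and likewise to $\tilde\varphi_{F_\ell}\cdot(\tilde\varphi_{F_{\ell+1}}\cdots\tilde\varphi_{F_m})$ for $\ell=1,\dots,m-1$, and summing via the triangle inequality, I obtain the constant $K_0>0$ of Proposition~\ref{proposition:defect1} (which depends only on $U$ and $c$) such that $\bigl\lvert\sigma_c(\tilde\phi\tilde\psi)-\sum_{\ell=1}^{m}\sigma_c(\tilde\varphi_{F_\ell})-\sigma_c(\tilde\psi)\bigr\rvert\leq mK_0$ and $\bigl\lvert\sigma_c(\tilde\phi)-\sum_{\ell=1}^{m}\sigma_c(\tilde\varphi_{F_\ell})\bigr\rvert\leq (m-1)K_0$. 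Subtracting and using $\sigma_c=\bar\sigma_c\circ\pi$ gives
\[
	\bigl\lvert\bar\sigma_c(\phi\psi)-\bar\sigma_c(\phi)-\bar\sigma_c(\psi)\bigr\rvert\leq(2m-1)K_0\leq 2K_0\,\|\phi\|_U.
\]

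Next I would symmetrize. For lifts $\tilde\phi,\tilde\psi$ of $\phi,\psi$, the element $\tilde\phi^{-1}(\tilde\phi\tilde\psi)\tilde\phi$ is a lift of $\psi\phi$, so Lemma~\ref{sigma  is ci} yields $\bar\sigma_c(\psi\phi)=\sigma_c\bigl(\tilde\phi^{-1}(\tilde\phi\tilde\psi)\tilde\phi\bigr)=\sigma_c(\tilde\phi\tilde\psi)=\bar\sigma_c(\phi\psi)$. Hence the defect $(\phi,\psi)\mapsto\bar\sigma_c(\phi\psi)-\bar\sigma_c(\phi)-\bar\sigma_c(\psi)$ is symmetric in its arguments, and applying the estimate of the previous paragraph to the pair $(\psi,\phi)$ as well gives $\bigl\lvert\bar\sigma_c(\phi\psi)-\bar\sigma_c(\phi)-\bar\sigma_c(\psi)\bigr\rvert\leq 2K_0\|\psi\|_U$. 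Taking $K=2K_0$ and combining the two bounds yields the assertion with $\min\{\|\phi\|_U,\|\psi\|_U\}$ on the right.

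The argument is essentially routine once Proposition~\ref{proposition:defect1} is in hand. The only points requiring care are the lift-independence of $\sigma_c$ — which is exactly the asymptotic-descent hypothesis, and is what makes telescoping along the specific lift $\tilde\varphi_{F_1}\cdots\tilde\varphi_{F_m}$ legitimate — and the symmetrization via Lemma~\ref{sigma  is ci}; I do not anticipate a serious obstacle.
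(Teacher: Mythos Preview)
Your proof is correct and follows essentially the same route as the paper: fragment one factor into pieces supported in Hamiltonian images of $U$ and apply Proposition~\ref{proposition:defect1} iteratively to obtain the bound $(2m-1)K_0$, then set $K=2K_0$. The paper phrases the telescoping as an induction on $\alpha=\|\phi\|_U$ rather than a direct sum, but the content is identical. One minor point in your favor: the paper simply writes ``assume, without loss of generality, that $\|\phi\|_U\leq\|\psi\|_U$'' without justification, whereas you explicitly establish the symmetry of the defect via Lemma~\ref{sigma  is ci}, which is the honest reason that WLOG is legitimate.
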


\begin{proof}
We assume, without loss of generality, that $\|\phi\|_U\leq\|\psi\|_U$.
We represent $\phi\in\Ham(M)$ as $\phi=\phi_1\cdots\phi_{\alpha}$ with $\|\phi_i\|_U=1$ for all $i$.
We claim that
\[
	\lvert\bar{\sigma}_c(\phi\psi)-\bar{\sigma}_c(\phi)-\bar{\sigma}_c(\psi)\rvert\leq C(2\alpha-1)
\]
for some $C>0$.
Then, the proposition follows if we set $K=2C$.
We prove the claim by induction on $\alpha=\|\phi\|_U$.

When $\alpha=1$, we can choose a Hamiltonian $F$ such that $\varphi_F=\phi$ and $F\in\mathcal{H}\bigl(\theta(U)\bigr)$ for some $\theta\in\Ham(M)$.
Then, Proposition \ref{proposition:defect1} implies that
\[
	\lvert\bar{\sigma}_c(\phi\psi)-\bar{\sigma}_c(\phi)-\bar{\sigma}_c(\psi)\rvert%
	=\lvert\bar{\sigma}_c(\varphi_F\psi)-\bar{\sigma}_c(\varphi_F)-\bar{\sigma}_c(\psi)\rvert%
	\leq C
\]
for some $C>0$.
This proves the claim.

We assume that the claim holds for $\|\phi\|_U=\alpha$.
For $\phi\in\Ham(M)$ with $\|\phi\|_U=\alpha+1$, we decompose it into
$\phi=\phi_{\alpha}\phi_1$ where $\|\phi_{\alpha}\|_U=\alpha$ and $\|\phi_1\|_U=1$.
By the induction hypothesis,
\[
	\lvert\bar{\sigma}_c(\phi\psi)-\bar{\sigma}_c(\phi_{\alpha})-\bar{\sigma}_c(\phi_1\psi)\rvert\leq C(2\alpha-1).
\]
Moreover, since $\|\phi_1\|_U=1$,
\[
	\lvert\bar{\sigma}_c(\phi_1\psi)-\bar{\sigma}_c(\phi_1)-\bar{\sigma}_c(\psi)\rvert\leq C
	\quad\text{and}\quad
	\lvert\bar{\sigma}_c(\phi_1)+\bar{\sigma}_c(\phi_{\alpha})-\bar{\sigma}_c(\phi)\rvert\leq C.
\]
Hence,
\[
	\lvert\bar{\sigma}_c(\phi\psi)-\bar{\sigma}_c(\phi)-\bar{\sigma}_c(\psi)\rvert\leq C(2\alpha+1).
\]
This completes the proof of Proposition \ref{proposition:defect}.
\end{proof}

\begin{remark}
Since the fragmentation norm $\|\cdot\|_{\mathcal{U}}$ with respect to a covering $\mathcal{U}$ is not conjugation invariant in general,
we cannot prove a proposition corresponding to Proposition \ref{proposition:defect} in the same manner
(see also the proof of Proposition \ref{proposition:defect1}).
\end{remark}


\subsection{Proof of Theorem \ref{calabi on surface}}

For $\vec{g}=(g_1,\ldots,g_n)\in\mathbb{N}^n$,
we recall that $(\Sigma_{\vec{g}},\omega)$ is the product manifold $\Sigma_{\vec{g}}=\Sigma_{g_1}\times\cdots\times\Sigma_{g_n}$
equipped with a symplectic form $\omega$.

\begin{proof}
Let $C_i$  be a non-contractible simple closed curve in the surface $\Sigma_{g_i}$,
and let $C$ denote the Lagrangian submanifold $C_1\times\cdots\times C_n$ of $(\Sigma_{\vec{g}},\omega_{\vec{A}})$.

For all positive integers $N$, we choose symplectomorphisms $f_1,\ldots,f_N$ of $(\Sigma_{\vec{g}},\omega_{\vec{A}})$ to ensure that the subsets
$C$, $f_1(C),\ldots,f_N(C)$ are mutually disjoint.
We fix $i=0,1,\ldots,N$.
We set $L_i=f_i(C)$, where $f_0=\mathrm{id}_{\Sigma_{\vec{g}}}$.
Then, $\QH_{\ast}(L_i;\mathbb{Z}_2)\cong\HF_{\ast}(L_i;\mathbb{Z}_2)$ does not vanish.
Let $c_i=c^{(L_i;\mathbb{Z}_2)}\colon\widetilde{\Ham}(\Sigma_{\vec{g}})\to\mathbb{R}$
denote the Lagrangian spectral invariant associated with $[L_i]\in\QH_{\ast}(L_i;\mathbb{Z}_2)$.
By Theorem \ref{Lagrangian descending}, $c_i$  descends asymptotically to $\Ham(\Sigma_{\vec{g}})$.

Now, we define a function $\mu_i\colon\Ham(\Sigma_{\vec{g}})\to\mathbb{R}$
by $\mu_i=-\Vol(\Sigma_{\vec{g}})\cdot\bar{\sigma}_{c_i}$.
By definition, $\mu_i$ satisfies partial homogeneity.
By Proposition \ref{displaceable SC},
any displaceable open subset of $\Sigma_{\vec{g}}$ satisfies the normally bounded spectrum condition with respect to $c_i$.
Hence, Proposition \ref{proposition:defect} implies partial quasi-additivity.
Moreover, the Calabi property follows from Proposition \ref{proposition:VPNBSC}.
In fact, for any displaceable open subset $U$ such that $\omega|_U$ is exact,
and any Hamiltonian $F\in\mathcal{H}(U)$,
\[
	\mu_i(\varphi_F)=-\Vol(\Sigma_{\vec{g}})\cdot\bar{\sigma}_{c_i}(\varphi_F)%
	=\Vol(\Sigma_{\vec{g}})\cdot\langle F\rangle=\mathrm{Cal}_U(\varphi_F).
\]
Finally, \cite[Theorem 41]{LZ} ensures the stability of $\mu_i$.
Hence, $\mu_i$ is a partial Calabi quasi-morphism.

By construction, $\mu_0,\mu_1,\ldots,\mu_N$ are linearly independent.
This completes the proof of Theorem \ref{calabi on surface}.
\end{proof}


\section{Problems}

The authors are yet to find the answers to the following problems.

\begin{problem}
Let $(M,\omega)$ be a closed symplectic manifold.
Let $c\colon\widetilde{\Ham}(M)\to\mathbb{R}$ be either the Oh--Schwarz spectral invariant or the Lagrangian spectral invariant defined in \cite{LZ}.
Does there exist an open subset $U$ of $M$ satisfying the asymptotically vanishing spectrum condition with respect to $c$
but not the normally bounded spectrum condition?
\end{problem}

Related to Corollary \ref{annulus frag}, we pose the following problem.

\begin{problem}
Let $(\Sigma_g,\omega)$ be a closed Riemann surface of positive genus $g$ with a symplectic form $\omega$.
Let $C$ be a non-contractible simple closed curve in $\Sigma_g$
and $U$ an open subset of $\Sigma_g$ displaceable from $C$.
Does there exist a bi-Lipschitz injective homomorphism
\[
	I\colon(\mathbb{Z},\lvert\cdot\rvert)\to(\Ham(\Sigma_g),\|\cdot\|_U)\text{?}
\]
\end{problem}

The following problem is also related to Corollary \ref{annulus frag}.

\begin{problem}
Let $(\mathbb{T}^2=\mathbb{R}/\mathbb{Z}\times\mathbb{R}/\mathbb{Z},\omega)$ be a 2-torus with a symplectic form $\omega$.
Let $U$ be an open neighborhood of $(\{0\}\times\mathbb{R}/\mathbb{Z})\cup(\mathbb{R}/\mathbb{Z}\times\{0\})$
and $V$ a contractible open subset of $\mathbb{T}^2$ with $\mathbb{T}^2=U\cup V$.
We consider the open covering $\mathcal{U}=\{U,V\}$ of $\mathbb{T}^2$.
Does there exist a bi-Lipschitz injective homomorphism
\[
	I\colon(\mathbb{Z},\lvert\cdot\rvert)\to(\Ham(\mathbb{T}^2),\|\cdot\|_{\mathcal{U}})\text{?}
\]
\end{problem}

Related to Corollary \ref{cp2}, we pose the following problem.

\begin{problem}
Let $(\mathbb{C}P^n,\omega_{\mathrm{FS}})$ be $n$-dimensional complex projective space with the Fubini--Study form $\omega_{\mathrm{FS}}$.
Let $L_C$ be the Clifford torus in $\mathbb{C}P^n$
and $U$ an open subset of $\mathbb{C}P^n$ displaceable from $L_C$.
Let $(\Sigma_g,\omega)$ be a closed Riemann surface of positive genus $g$ with a symplectic form $\omega$
and $C$ a non-contractible simple closed curve in $\Sigma_g$.
We consider the product manifold $(\mathbb{C}P^n\times\Sigma_g,\omega_{\mathrm{FS}}\oplus\omega)$,
the Lagrangian submanifold $L_C\times C$,
and the open subset $\widehat{U}=U\times\Sigma_g$ of $\mathbb{C}P^n\times\Sigma_g$.
Does there exist a bi-Lipschitz injective homomorphism
\[
	I\colon(\mathbb{Z},\lvert\cdot\rvert)\to(\Ham(\mathbb{C}P^n\times\Sigma_g),\|\cdot\|_{\widehat{U}})\text{?}
\]
\end{problem}

By Proposition \ref{relative disp}, $\widehat{U}$ satisfies the bounded spectrum condition with respect to $c^{(L_C\times C;R)}$ for any ring $R$.
However, by an argument similar to \cite{EP09},
we see that $c^{(L_C\times C;R)}$ is not a quasi-morphism and that we therefore cannot apply Theorem \ref{theorem:main2}.


\section*{Acknowledgments}
The authors would like to thank Professor Yong-Geun Oh and Takahiro Matsushita for some advice.
Especially, Takahiro Matsushita read our draft seriously and gave a lot of comments on writing.
A part of this work was carried out while the first named author was visiting NCTS (Taipei, Taiwan)
and the second named author was visiting IBS-CGP (Pohang, Korea).
They would like to thank the institutes for their warm hospitality and support.


\bibliographystyle{amsart}

\end{document}